\theoremstyle{plain}
\newtheorem{thm}{Theorem}[section]
\newtheorem{prop}[thm]{Proposition}
\newtheorem{lem}[thm]{Lemma}
\newtheorem{df}[thm]{Definition}
\newtheorem{rem}[thm]{Remark}
\newtheorem{ass}[thm]{Assumption}
\newtheorem{cor}[thm]{Corollary}
\def\be#1 {\begin{equation} \label{#1}}
\newcommand{\ee}{\end{equation}}
\newcommand{\mb}{\medskip\noindent}
\newcommand{\gb}{\bigskip\noindent}
\newcommand{\R}{\mathbb R}
\newcommand{\I}{\mathcal I}
\newcommand{\RR}{\mathrm R}
\newcommand{\PPP}{\mathrm P}
\def \OO {\mathcal{O}}
\def \BB {{\mathcal{B}}}
\def \NN {\mathrm{N}}
\def \K {\mathcal{B}}
\def \A {E}
\newcommand{\argmin}[1]{\underset{#1}{\operatorname{arg}\,\operatorname{min}}\;}
\definecolor{Colorrouge}{rgb}{0.8,0.0,0.2}
\definecolor{ColorSubSection}{rgb}{0.0,0.7,0.3}
\definecolor{ColorResume}{rgb}{0.2,0.0,0.8}
\definecolor{ColorFond}{rgb}{0.85,0.85,1}
\definecolor{Colorbleu}{rgb}{0,0,1}
\definecolor{Colorcyan}{rgb}{0,0.75,0.75}
\definecolor{Colormag}{rgb}{0.75,0,0.75}
\definecolor{bleutitre}{rgb}{0.2,0.0,0.8}
\def \virg {\, , \,\,}
\def \dsp {\displaystyle}
\def \vsp {\vspace{6pt}}
\def\sqw{\hbox{\rlap{\leavevmode\raise.3ex\hbox{$\sqcap$}}$%
\sqcup$}}
\def\findem{\ifmmode\sqw\else{\ifhmode\unskip\fi\nobreak\hfil
\penalty50\hskip1em\null\nobreak\hfil\sqw
\parfillskip=0pt\finalhyphendemerits=0\endgraf}\fi}
\begin{document}

\title{Sweeping process by prox-regular sets in Riemannian Hilbert manifolds}

\day=26 \month=01 \year=2015
\date{\today}

\author{Fr\'ed\'eric Bernicot}
\address{CNRS - Universit\'e de Nantes \\ Laboratoire de Math\'ematiques Jean Leray \\ 2, Rue de la Houssini\`ere 44322 Nantes Cedex 03, France}
\email{frederic.bernicot@univ-nantes.fr} \urladdr{http://www.math.sciences.univ-nantes.fr/$\sim$bernicot/}

\author{Juliette Venel}
\address{LAMAV \\ Universit\'e de Valenciennes et du Hainaut-Cambr\'esis Campus \\ Mont Houy 59313 Valenciennes Cedex 9, France}
\email{juliette.venel@univ-valenciennes.fr} \urladdr{http://www.math.u-psud.fr/$\sim$venel/}

\subjclass{34A60 ; 49J52 ; 58C06 ; 58C20.}
\keywords{Differential inclusion ; Sweeping process ; Prox-regularity ; Riemannian manifold.}
\thanks{F. Bernicot's research is partly supported by ANR project AFoMEN no. 2011-JS01-001-01.}

\begin{abstract}  
In this paper, we deal with sweeping processes on (possibly infinite-dimensional) Riemannian Hilbert manifolds. We extend the useful notions (proximal normal cone, prox-regularity) already defined in the setting of a Hilbert space to the framework of such manifolds. Especially we introduce the concept of local prox-regularity of a closed subset in accordance with the geometrical features of the ambient manifold and we check that this regularity implies a property of hypomonotonicity for the proximal normal cone. Moreover we show that the metric projection onto a locally prox-regular set is single-valued in its neighborhood. Then under some assumptions, we prove the well-posedness of perturbed sweeping processes by locally prox-regular sets. %Finally we apply this result to crowd motion modeling. 
\end{abstract}

\maketitle

\section{Introduction}

\subsection{A brief review of results on sweeping processes}

The aim of this paper is to extend the study of so-called {\it sweeping process} in the setting of a (possibly infinite dimensional) Riemannian manifold.

Sweeping processes are specific differential inclusions of first order involving proximal normal cones to a moving set. They were extensively studied for the last years in the Euclidean space and more generally in a Hilbert space.

More precisely, consider a Hilbert space $H$ and a moving set $t\to C(t)$ on a time-interval
$I:=[0,T]$ and assume that the set-valued map $C:I \rightrightarrows H$ takes nonempty closed values. A function $u:I\to H$ is a solution of a problem of perturbed sweeping process if it satisfies the following differential inclusion:
\begin{equation} 
\left\{
\begin{array}{l}
 \dsp \frac{du(t)}{dt} + \NN(C(t),u(t)) - F(t,u(t)) \ni 0 \vsp \\
 u(t)\in C(t) \vsp \\
 u(0)=u_0\ ,
\end{array}
\right. \label{sys1}
\end{equation}
with an initial data $u_0\in C(0)$ and  $F: I \times H \rightrightarrows H$ a set-valued map taking nonempty compact values. Here, $\NN(C(t),u(t))$ stands for the proximal normal cone to $C(t)$ at the point $u(t)$. This differential inclusion can be thought as following: the point $u(t)$, submitted to the field $F(t,u(t))$, has to live in the set $C(t)$ and so follows its time-evolution.

\gb Let us first give a brief overview of the study for such problems. The sweeping processes have been introduced by
 J.J. Moreau in 70's  (see \cite{Moreausweep}). He considered the following problem: a point $u(t)$ has to be inside a moving convex set $C(t)$ included in a Hilbert space. When this point is catched-up by the boundary of $C(t)$, it moves in the opposite of the outward normal direction of the boundary, as if it was pushed by the physical boundary in order to stay inside the convex set $C(t)$. Then the position $u(t)$ of this point is described by the following differential inclusion
\begin{equation}
\label{Moreq}
 -\dot{u}(t) \in \partial I_{C(t)}(u(t)).
\end{equation}
Here we write $\partial I_{C}$ for the subdifferential of the characteristic function of a convex set $C$. In this work, the sets $C(t)$ are assumed to be convex and so $\partial I_{C(t)}$ is a maximal monotone operator depending on time.
To solve this problem, J.J. Moreau brings a new important idea in proposing a \textit{catching-up} algorithm.
To prove the existence of solutions, he builds discretized solutions in dividing the time interval $I$ into sub-intervals where the convex set $C$ has a little variation. Then by compactness arguments, he shows that a limit mapping can be constructed (when the length of subintervals tends to $0$) which satisfies the desired differential inclusion. \\
Thus it was the first result
concerning sweeping process (with no perturbation $F=0$) because $\partial I_{C(t)}(x) = \NN(C(t),x)$ in case of convex sets $C(t)$.

\gb Since then, important improvements have been developed by weakening the assumptions in order to
obtain the most general result of existence for sweeping
process. There are several directions: one can want to add a perturbation
$F$ as written in (\ref{sys1}), one may require a weaker assumption than the convexity of the sets $C(t)$, one would like to obtain results in Banach spaces (and not only in Hilbert spaces).

\gb In \cite{V}, M.~Valadier dealt with sweeping process by sets $C(t)=\R^n\setminus \textrm{int}(K(t))$ where $K(t)$ are closed and convex sets.
Then in \cite{CXDV}, C.~Castaing, T.X. D\'uc H\={a} and M. Valadier have studied the perturbed problem in finite dimension with convex sets $C(t)$ (or complements of convex sets). In this framework, they proved existence of solutions for (\ref{sys1}) with a convex compact valued perturbation $F$ and a Lipschitzean multifunction $C$. 
Later in \cite{Castaingper}, C.~Castaing and M.D.P.~Monteiro Marques have considered similar problems in assuming upper semicontinuity for $F$ and a ``linear compact growth'':
\begin{equation}
 F(t,x)\subset \beta(t) (1+|x|)\overline{B(0,1)} \virg \forall (t,x) \in I \times \R^n.
\label{hypF}
\end{equation}
Moreover the set-valued map $C$ was supposed to be Hausdorff continuous and satisfying an ``interior ball condition'': 
\begin{equation} 
 \exists r>0 \virg  B(0,r) \subset C(t) \virg \forall t \in I.
\label{hypC}
\end{equation}

\gb  Then the main concept, which appeared to get around the convexity of sets $C(t)$, is the notion of  ``uniform prox-regularity''. This property is very well-adapted to the resolution of (\ref{sys1}): a set $C$ is said to be {\it $\eta$-prox-regular} if the projection on $C$ is single-valued and continuous at any point whose the distance to $C$ is smaller than $\eta$. \\
Numerous works have been devoted to study sweeping processes under this assumption of prox-regularity. The case without perturbation ($F={0}$) was firstly treated by G.~Colombo, V.V.~Goncharov in \cite{Colombo}, by H.~Benabdellah in \cite{Benab} and later by L.~Thibault in \cite{Thibsweep} and by G. Colombo, M.D.P.~Monteiro Marques in \cite{Monteiro}.
In \cite{Thibsweep}, the considered problem is 
\begin{equation}
 \left \{
\begin{array}{l} 
-du \in \NN(C(t), u(t)) \vsp \\
u(0)=u_0\ ,
 \end{array}
\right.
\label{eq:mesdiffssm}
\end{equation} 
where $du$ is the differential measure of $u$. The existence and uniqueness of solutions of (\ref{eq:mesdiffssm}) are proved with similar assumptions as previously. \\
In an infinite dimensional Hilbert space, the perturbed problem is studied by M.~Bounkhel, J.F.~Edmond and L.~Thibault in \cite{Thibnonconv, Thibsweep, Thibrelax, Thibbv}.
For example in \cite{Thibbv}, the authors show the well-posedness of
\begin{equation}
 \left \{
\begin{array}{l} 
-du \in \NN(C(t), u(t)) + F(t,u(t)) dt \vsp \\
 u(0)=u_0\ ,
 \end{array}
\right.
\label{eq:mesdiffasm}
\end{equation} 
with  a set-valued map $C$ taking $\eta$-prox regular values (for some $\eta>0$) such that   
\begin{equation}
|d_{C(t)}(y) - d_{C(s)}(y)| \leq \mu(]s,t]) \virg \forall y\in H,\ \forall \  s, t \in I  \virg s\leq t
\label{varadon}
\end{equation}
where $\mu$ is a nonnegative measure satisfying
\begin{equation}
\sup_{s \in I} \mu(\{s\}) <\frac{\eta}{2}.
 \label{charge_singleton}
\end{equation}
The proof uses the algorithm developed by J.J.~Moreau with additional arguments to deal with the perturbation $F$ and the prox-regularity assumption.

\gb Indeed the main difficulty of this problem is the weak smoothness of the proximal normal cone. For a fixed closed subset $C$, the set-valued map $x\rightarrow \NN(C,x)$ is not in general upper semicontinuous and the uniform prox-regularity assumption permits us to obtain this required smoothness. To finish we mention that the sweeping processes are also studied in the framework of Banach spaces (see \cite{bena2} and \cite{BV}).

\subsection{Sweeping processes on Riemannian manifolds}

As previously explained,  sweeping processes have been extensively studied in the context of a linear ambient space (Euclidean space, Hilbert space, Banach space). However, the proof relies on the \textit{catching-up} algorithm (which only requires a projection operator) and the notion of uniform prox-regularity (which is equivalent of an hypomonotonicity property for the proximal normal cone). These observations make us want to extend the study of such differential inclusions in the framework of Riemannian manifolds, where the metric allows us to define a projection and where the Hilbertian structure of the tangent spaces is necessary to have a kind of hypomonotonicity property.

In the Hilbertian situation, the finite dimension is not useful and so we aim to work on a (possibly infinite dimensional) Riemannian Hilbert manifold (see Definition \ref{def:manifold} for details), which is locally isomorphic to a separable Hilbert space. Let $M$ be such a manifold, we consider a sweeping process on $M$, given by a set-valued map $C(\cdot)$ and a single-valued perturbation $f$
\begin{equation} 
\left\{
\begin{array}{l}
 \dsp \frac{dx(t)}{dt} + \NN(C(t),x(t)) - f(t,x(t)) \ni 0 \vsp \\
 x(t)\in C(t) \vsp \\
 x(0)=x_0\ ,
\end{array}
\right. \label{sys2}
\end{equation}
with an initial data $x_0\in C(0)$.
Note that the first differential inclusion takes place in the fiber-tangent space $T_{x(t)}M$. 
To study the well-posedness of this problem, we will generalize the notion of a prox-regular set (Definition \ref{def:prox}) which will be shown to be equivalent to a kind of hypomonotonicity property (see Theorem \ref{thm:hypo}). Some difficulties arise from the framework of manifolds.  Indeed in a Hilbertian space $H$, a nonempty closed set $C \subset H $ is said $\eta$-prox-regular or prox-regular with constant $\eta$ if for every point $z$ satisfying $d_C(z)= \underset  {c \in C } \inf  d(z,c)  < \eta$ there exists a unique point $c_z \in C$ such that $d(c_z, z)= d_C(z)$ and the map $z \mapsto c_z$ is continuous. However in infinite-dimensional Riemannian manifolds, geodesics minimizing the distance between two given points do not generally exist. Thus we will make some assumptions so that they do always exist locally. That's why we will give and use a definition of a locally prox-regular set which will take into account the geometry of the ambient manifold even if a definition of uniformly prox-regular sets will be given in Remark \ref{rem:unifprox} in specific manifolds. 

In addition we will recover an important property of prox-regular sets. Indeed we will prove that the metric projection onto a locally prox-regular set is single-valued in its neighborhood.

The notions of proximal normal cones and prox-regularity at a point are already introduced in \cite{HP} in the setting of a Riemannian manifold. However we would like to propose another definition of the proximal normal cone (related to the metric projection and better adapted to deal with problems of sweeping processes). Obviously we will show their equivalence. Moreover the prox-regularity at a point is not sufficient to treat our differential inclusions : we need a "larger'' concept of prox-regularity which we will call local prox-regularity as already specified.  
Once again these different notions are compatible. Indeed we will prove that a locally prox-regular set in our sense is prox-regular at any point in the sense of \cite{HP}. \\

Finally we prove the main result:

\begin{thm} \label{thm:general} Under some assumptions about the Riemannian Hilbert manifold $M$ (Assumption (\ref{ass:ic})), we consider a Lipschitz bounded mapping $f$ and a Lipschitz set-valued map $C(\cdot)$ taking nonempty and locally prox-regular values (Assumption \ref{ass:C}). Then for all $x_0\in C(0)$, the system (\ref{sys2}) has a unique solution $x\in W^{1,\infty}(I,M)$. 
\end{thm}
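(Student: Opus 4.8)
The plan is to adapt Moreau's catching-up algorithm to the Riemannian setting, using the two structural results announced in the excerpt: the single-valuedness of the metric projection onto a locally prox-regular set in a neighborhood (so that the algorithm is well-defined), and the hypomonotonicity of the proximal normal cone stemming from local prox-regularity (Theorem \ref{thm:hypo}), which will drive the stability/convergence estimates. Throughout, Assumption (\ref{ass:ic}) on $M$ is what guarantees that minimizing geodesics exist locally and that the projection used at each step lands in the good neighborhood; the Lipschitz regularity of $C(\cdot)$ (Assumption \ref{ass:C}) controls how far $C(t_{k+1})$ moves from $C(t_k)$, and the boundedness and Lipschitz character of $f$ control the perturbation term.

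\textbf{Construction of discretized solutions.} First I would fix a partition $0=t_0<t_1<\dots<t_N=T$ of mesh $h=T/N$ and define $x^N$ recursively: set $x^N(0)=x_0$, and having defined $x^N(t_k)=x_k\in C(t_k)$, first flow along the perturbation by following the geodesic issued from $x_k$ with initial velocity $f(t_k,x_k)$ for time $h$, reaching a point $y_{k+1}$, and then set $x_{k+1}:=\mathrm{proj}_{C(t_{k+1})}(y_{k+1})$. For $h$ small enough, the Lipschitz bound on $C$ and the boundedness of $f$ force $d_{C(t_{k+1})}(y_{k+1})$ to be below the prox-regularity threshold, so the projection is single-valued; this is where one checks that the iterates stay in a fixed relatively compact region and that all the relevant geodesics exist and are minimizing. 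Interpolating the $x_k$ by geodesic arcs (or by a suitable catching-up interpolant) produces $x^N\in W^{1,\infty}(I,M)$ with a velocity bound uniform in $N$, coming from $\|f\|_\infty$ plus the variation of $C$.

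\textbf{Passage to the limit.} The uniform Lipschitz bound gives, along a subsequence, uniform convergence of $x^N$ to some $x\in W^{1,\infty}(I,M)$ with $x(t)\in C(t)$ for all $t$ and $x(0)=x_0$; compactness of the values of $f$ (here single-valued but one still needs precompactness of the velocities, which in infinite dimensions is the delicate point — one uses the compactness built into the perturbation together with the structure of the normal cone inclusion, exactly as in the Hilbert-space proofs of \cite{Thibbv}). To identify the limit as a solution of (\ref{sys2}), I would pass to the limit in the discrete inclusion $\frac{x_{k+1}\ominus x_k}{h}+\text{(normal term)}\ni f(t_k,x_k)$ using the hypomonotonicity inequality for $\NN(C(t),\cdot)$ from Theorem \ref{thm:hypo}: testing the discrete inclusion against $x^N(t)\ominus x(t)$ and summing yields a Gronwall-type estimate, and the hypomonotone error terms are quadratic in the (vanishing) distances, so they disappear in the limit. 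This simultaneously gives existence and, applied to two solutions $x,\tilde x$, uniqueness: writing the inclusion for each, subtracting, and using hypomonotonicity of the proximal normal cone together with the Lipschitz bound on $f$ gives $\frac{d}{dt}\,d(x(t),\tilde x(t))^2\le C\,d(x(t),\tilde x(t))^2$, whence $x\equiv\tilde x$ by Gronwall since $x(0)=\tilde x(0)$.

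\textbf{Main obstacle.} The genuinely hard part is the convergence step in the (possibly) infinite-dimensional manifold: one must (i) transport the catching-up estimates, which in the Hilbert case rely on expansions of $|x_{k+1}-x_k|^2$ and inner-product manipulations, onto a manifold where those have to be replaced by distance functions and parallel transport, keeping careful track of curvature-type correction terms controlled by Assumption (\ref{ass:ic}); and (ii) secure enough compactness to extract a limit and to pass to the limit inside the proximal normal cone, since in infinite dimensions closed bounded sets are not compact and $x\mapsto\NN(C,x)$ is only upper semicontinuous thanks to prox-regularity. The hypomonotonicity property (Theorem \ref{thm:hypo}) is the key tool that makes both the error control and the closedness of the inclusion under weak limits work, exactly as uniform prox-regularity does in the linear theory.
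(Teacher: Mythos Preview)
Your outline matches the paper's architecture (catching-up scheme, uniform velocity bound, hypomonotonicity, Gronwall for uniqueness), but there is a genuine gap at the convergence step, and it is precisely the one you flag as the ``main obstacle'' without resolving.

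You propose to obtain the limit $x$ by extracting a uniformly convergent subsequence from $(x^N)$ via ``compactness built into the perturbation''. In the present setting there is no such compactness: $f$ is single-valued, merely bounded and Lipschitz, and the ambient manifold is a possibly infinite-dimensional Hilbert manifold, so equi-Lipschitz does not give precompactness in $C^0(I,M)$. Your plan to extract a subsequential limit and then identify it simply does not get off the ground.

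The paper avoids this altogether: instead of extracting a subsequence, it proves directly that $(x^n)$ is a \emph{Cauchy sequence} in $C^0(I,M)$. Concretely, for two mesh sizes one writes the discrete inclusion at $x^n(\theta^n(t))$ and at $x^m(\theta^m(t))$, applies the hypomonotonicity inequality of Theorem~\ref{thm:hypo} with test points $x^m(t)$ and $x^n(t)$ respectively, transports everything to a common tangent space via parallel transport, and sums. After controlling the curvature-type remainders (second-order Taylor expansions of $d^2$, Hessian bounds from Assumption~\ref{ass:ic}), one arrives at
\[
\frac{d}{dt}\, d(x^n(t),x^m(t))^2 \le C\, d(x^n(t),x^m(t))^2 + \mathcal{O}(h),
\]
and Gronwall then gives $\sup_t d(x^n(t),x^m(t))^2=\mathcal{O}(h)$. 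Metric completeness of $M$ yields a strong limit $x$ of the \emph{full} sequence. Only afterwards, to identify $\dot x$, does the paper pass to a local chart and use weak-$*$ convergence of $\dot y^n$ together with Mazur's lemma; but this is a light step once strong convergence of $x^n$ is in hand.

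In short: the Gronwall/hypomonotonicity estimate you reserve for the identification and uniqueness steps must be deployed \emph{earlier}, between two discretized trajectories $x^n$ and $x^m$, to manufacture convergence in the absence of compactness. That reordering is the missing idea.
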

 
To our knowledge, sweeping processes on (finite- or infinite-dimensional) manifolds have not been previously studied. However other differential inclusions on Riemannian manifolds have already been treated. For example, in \cite{Grammel}, the author is interested in controllability properties of autonomous  differential inclusions: $$ \dot{x}(t) \in F(x(t))$$ in compact finite-dimensional Riemannian manifolds. The map $F$ is supposed to be Lipschitz continuous and takes nonempty convex and compact values. 
Then in \cite{Doma}, the authors prove that there exists at least one solution of the problem:
$$\dot{x}(t) \in F(t, x(t)),$$ 
with an initial condition in a  finite-dimensional Riemannian manifold. In this work, $F$ is an integrable bounded field, taking nonempty and closed values. In addition this map is supposed to be lower semicontinuous or to satisfy the Carath\'eodory conditions. Note that compactness arguments are used in their proof so the finite-dimensional assumption is crucial and by such a way, it is not possible to get uniqueness results. 
%Furthermore, second order differential inclusions  are considered in \cite{Glik} in the framework of complete Riemannian manifolds. More precisely the authors deal with the two-point boundary value problem
%$$\frac{D}{dt} \dot{m}(t)  = F(t, m(t), \dot{m}(t)), $$
%where $\frac{D}{dt}  $ represents the covariant derivative of Levi-Civit\'a connection and the set-valued vector field $F$  with closed images is assumed to be lower semicontinuous and uniformly bounded.  

\subsection{Motivation from unilaterally constrained problems}

Let us briefly describe one of our motivations, coming from problems involving unilateral constraints.

Even in the Euclidean space $\R^d$, several applications for modelling are concerned with sweeping processes involving an admissible set 
$$ C(t):=\left\{x\in\R^d,\ \forall i\in\I,\  g_i(t,x)\geq 0 \right\}, $$
 where $(g_i)_{i\in\I}$ is a finite collection of inequality constraints.
 
To apply the existence results to the sweeping process with this moving set, we have to check that $C(t)$ is uniformly prox-regular (with a time-independent constant $\eta$), which may be very difficult. However a criterion was given in \cite[Proposition 2.9]{Juliette} and was used in several recent works (\cite{BV2,BV3,BL}). It would require to check
a kind of Mangasarian-Fromowitz constraint qualification (MFCQ) of the active gradients (we recall that the active gradients correspond to mappings $g_j$ satisfying $g_j(t,x)=0$). Let us specify that this condition describes a positive-linearly independence of the active gradients.
Of course, if the admissible set is also given by equality constraints, 
$$ C(t):=\left\{x\in\R^d,\ \forall i\in \I_1 \ g_i(t,x)=0 \quad \textrm{and} \quad \forall i\in\I_2,\  g_i(t,x)\geq 0 \right\}, $$ the same criterion cannot be verified by turning equality constraints into two  inequality ones (that is to say by writing for $i\in\I_1$, $g_i(t,x)=0$ as $g_i(t,x)\geq 0$ and $-g_i(t,x)\geq 0$). 
Indeed a gradient and its opposite are clearly not positively independent.

So it is important to have another approach for dealing with such admissible sets (involving inequalities and equalities constraints). In the case where the equality constraints are time-independent, the new point of view is to consider the manifold given by the equality constraints
$$ M:=\{x\in\R^d,\ \forall i\in\I_1,\ g_i(x)=0\}, $$ and the admissible set becomes $\tilde{C}(t):=\{x\in M,\ \forall i\in\I_2, g_i(t,x)\geq 0\}$.

Under some conditions, it is doable to check that the manifold $M$ is a smooth submanifold of $\R^d$ and so a finite dimensional Riemannian manifold. Consequently it suffices to check that the admissible set $\tilde{C}(\cdot)$ that is only defined with inequalities, is locally prox-regular.

%That is one of our motivation to study sweeping process on Riemannian manifolds and we refer the reader to Section \ref{sec:appli} for an example coming from the modelling of crowd motion.

\bigskip

The paper is organized as follows. Section \ref{sec:pre} is first devoted to some definitions and reminders about Riemannian geometry. Then in Section \ref{sec:pre2}, we extend some notions of convex analysis in the setting of a Riemannian Hilbert manifold (proximal normal cone, prox-regular set, ...) and we focus on establishing a characterisation of the prox-regularity property in terms of hypomonotonicity for the proximal normal cone (see Theorem \ref{thm:hypo}). Then we prove Theorem \ref{thm:general} in Section \ref{sec:sweep}, using the catching-up algorithm and studying the convergence of discretized solutions. 

%%%%%%%%%%%%%%%%%%%%%%%%%%%%%%%%%%%%%%%%%%%%%%%%%%%%%%%%%%%%%%%%%%%%%%%%%%%%%%%%%%%
%%%%%%%%%%%%%%%%%%%%%%%%%%%%%%%%%%%%%%%%%%%%%%%%%%%%%%%%%%%%%%%%%%%%%%%%%%%%%%%%%%%
%%%%%%%%%%%%%%%%%%%%%%%%%%%%%%%%%%%%%%%%%%%%%%%%%%%%%%%%%%%%%%%%%%%%%%%%%%%%%%%%%%%

\section{Preliminaries and definitions, related to Hilbert manifolds} \label{sec:pre}

For a complete survey on Riemannian geometry and the extension to infinite dimensional framework, we refer the reader to \cite{Klingenberg, Lang}. Let us just recall the main concepts.

\begin{df}[Hilbert manifold] \label{def:manifold} 
$M$ is a Hilbert manifold if there exists a separable Hilbert space $H$ and a smooth atlas, defining local charts from $M$ to $H$.
Then it follows that for all $x\in M$, the tangent space $T_xM$ has a Hilbert structure. Its inner product is denoted by $\langle \, ,\, \rangle$, $\langle \, , \,\rangle_x$ or  $\langle \, , \,\rangle_{T_xM}$ and the associated norm is represented by  $|\  | $ or $|\ |_{T_xM}$. 
Such a manifold is called a Riemannian Hilbert manifold if it is endowed with the following Riemannian metric: for two different points $x,y\in M$
$$ d(x,y) :=\inf \left\{ \int_a^b |\dot{\gamma}(s) |ds : \gamma   \textmd{ is a $C^1$-curve joining } x = \gamma(a)  \textmd{ and } y= \gamma(b)\right\}. $$
The bundle tangent space is denoted by $TM$ and we recall that $$TM = \underset { x \in M}  \bigcup \{  x\} \times T_x M  .$$
We denote by $\nabla^\star$ the Levi-Civit\'a connection that is the unique connection consistent with the metric $d$. 
\end{df}

In the sequel, $M$ will be a connected Riemannian Hilbert manifold and $\nabla^*$ its natural Levi-Civita connection.
Using this connection, we can define the parallel transport:

\begin{df}[Parallel transport along a curve]  For a $C^1$-curve $\gamma :  [0, T] \rightarrow M$ and $v \in T_{\gamma(0)}M$, the parallel transport of $v$ along $\gamma$ is the unique vector field $t \mapsto V(t)$ satisfying 
$$ V(t) \in  T_{\gamma(t)}M \textmd{  and  } \left\{
\begin{array}{l}
 {\nabla^\star}_{\dot \gamma(t)} V(t)=0 \\
 V(0)=v.
\end{array}  \right. $$
\end{df}

\begin{df}[Geodesics] The geodesics are defined to be (locally) the shortest path between two points. As a consequence, a geodesic $\gamma$ is also a curve such that parallel transport along it preserves the tangent vector to the curve, so
  $$  \nabla^\star_{\dot\gamma} \dot\gamma= 0.$$
\end{df}

For any point $x\in M$ and for any vector $v\in T_xM$, there exists a unique geodesic $\gamma_{x,v} \, : I \rightarrow M$ such that
$$ \left\{\begin{array}{ll} 
       \gamma_{x,v}(0) & = x \\
      \dot\gamma_{x,v}(0) & = v,    
          \end{array}
       \right. $$
where $I$ is a maximal open interval in $\R$ containing $0$ (depending on $x$ and $v$). 

\begin{df} The Riemannian Hilbert manifold $M$ is said geodesically complete if
   every maximal geodesic is defined on $\R$ (i.e. $I=\R$). 
\end{df}

From now on, we assume that the connected Riemannian Hilbert manifold $M$ is geodesically complete.

\begin{df}[Exponential map] 
 If $M$ is geodesically complete, we recall that the exponential map at $x$, denoted by $\exp_x$, is defined in the whole tangent space $T_xM$ and maps each vector $v$ of the tangent space
 $T_xM$ to the end of the geodesic segment $[x,y]$ in $M$ issuing from
 $x$ in the direction $v$ and having length $d(x,y) = |v |$.
\label{def:expmap}
\end{df}

\begin{df}[Injectivity radius]  We define the injectivity radius of
  $M$ at $x$, $i_M(x)$ as the supremum of the numbers $r>0$ of all balls
  $B(0_x,r)$ in $T_xM$ for which $\exp_x$ is a diffeomorphism from
  $B(0_x,r)$ onto $B(x,r)$. Then, we denote the global injectivity
  radius of $M$ by
$$ i(M):= \inf_{x\in M}  i_M(x).$$
\end{df}

\begin{df}[Convexity radius] The convexity radius $c_M(x)$ of a point $x\in M$ is the supremum of the numbers $r>0$ such that the ball $B(x,r)$ is convex (in the sense that given two points $p,q\in B(x,r)$, there exists a unique geodesic in $B(x,r)$ joining them and of length $d(p,q)$). 
 We denote the global convexity radius by
 $$ c(M):= \inf_{x\in M} c_M(x).$$
\end{df}

\noindent Similarly we define for a bounded subset $\mathcal{B}$ of $M$, $$i(\mathcal{B}):= \inf_{x\in \mathcal{B}}  i_M(x) \quad \textrm{and} \quad c(\mathcal{B}) : = \inf_{x\in\mathcal{B} } c_M(x).$$

\noindent Note that for $x \in \mathcal{B}$, the open ball $B(x, c(\mathcal{B})) $ is not in general included in $\BB$. \\

\noindent Let $\BB$  a bounded subset of $M$ and $x$, $y  \in \BB$ satisfying $d(x,y) < c(\BB) $,  we define $\Gamma_{x,y}\in T_xM$ as the unique vector $w\in T_xM$ satisfying 
$$ \gamma_{x,w}(1)=y.$$
 We can note that its norm  is equal to the distance between
$x$ and $y$: \be{normgam} |\Gamma_{x,y}| = d(x,y).\ee
According to Definition \ref{def:expmap}, we have $y= \exp_x(\Gamma_{x,y})$. In other words, the vector $\Gamma_{x,y}$ could be also expressed
as follows:  $$ \Gamma_{x,y} =
\exp_x^{-1}(y).$$

\begin{df}[Sectional curvature]  Let $x \in M $ and $\sigma_x \subset T_{x}M$ a two-dimensional plane, the sectional curvature $K(\sigma_x)$  is the Gaussian curvature of the section of $M$ whose tangent space at $x$ is $\sigma_x$
(in other words, it is the Gaussian curvature of the range of $\sigma_x$ under the exponential map at $x$).  
\end{df}

Since the interesting notions have been recalled, we can now specify the hypotheses about the manifold.

\begin{ass} \label{ass:ic}
We assume that $M$ is a connected Riemannian Hilbert manifold which is metrically complete and geodesically complete and $M$ has a locally bounded geometry : for every bounded subset $\mathcal{B} \subset M$, 
\begin{itemize}
 \item $i(\mathcal{B} ) >0$ and  $c(\mathcal{B})>0$
 \item the sectional curvatures are bounded in $\mathcal{B} $:  $ |K(\mathcal{B})| < + \infty $
$$ K(\mathcal{B}) : = \underset{ \begin{array}{c}x\in \mathcal{B} \\ \sigma_x \subset T_xM \end{array}}{\sup}K( \sigma_x) $$
\end{itemize}
Thus we define for a bounded subset $\mathcal{B} \subset M$,
 \be{rho0}  \rho(\mathcal{B}):= \min\{i(\mathcal{B}),c(\mathcal{B}), \pi/(2\sqrt{|K(\mathcal{B})|})\}. \ee
Moreover the exponential map $\exp$ is supposed to be locally smooth on $TM$ and the inverse map $D\exp^{-1}_x:TM \rightarrow T_xM $ is supposed to be locally Lipschitz: for every bounded subset $\mathcal{B} \subset M$ and for $k \in\{0, 1,2\}$,  there exist $C_e({\mathcal B }) >0 $ and $L(\mathcal{B}) >0 $ such that
\begin{equation} \|\exp\|_{C^k({\mathcal B})} \leq L(\mathcal{B})  \label{eq:exp} \end{equation}
and for all
$  x,y,z\in{\mathcal B}$ satisfying $ d(x, y) < \rho(\BB)$ and  $ d(x, z) < \rho(\BB)$,  for all $v\in T_yM,  w\in T_zM  $ 
$$
\begin{array}{l}
| \exp^{-1}_x (y) -\exp^{-1}_x(z) |_{T_xM} \leq C_e({\mathcal B })d(y,z),  \vsp \\
  | D\exp^{-1}_x(y)[v] -D\exp^{-1}_x(z)[w] |_{T_xM} \leq C_e({\mathcal B })d_{TM}((y,v),(z,w)),
  \end{array}
  $$
where $d_{TM}$ represents the distance on the tangent bundle: 
 $$ d_{TM}((y,v),(z,w)) := d(y,z)+ |L_{y\rightarrow z} v - w|_{T_zM}$$
and $L_{y\rightarrow z}$ is the tangential parallel transport from $y$ to $z$ (see Definition \ref{def:partrans}).  
\end{ass}

\medskip

\begin{rem}
In the finite dimensional case, it suffices to assume that the connected Riemannian manifold $M$ is geodesically complete because in that case Hopf-Rinow's Theorem asserts that $M$ is automatically metrically complete and  the closed and bounded subsets of $M$ are compact. Since the functions $i_M$, $c_M$ are continuous, the quantities $i(\mathcal{B})$ and  $c(\mathcal{B})$ are necessarily strictly positive. In the same way, $|K(\mathcal{B})| < +\infty $ is also satisfied.   
\end{rem}

%  \begin{thm} {Hopf-Rinow}
%  Let $M$ be a connected Riemannian manifold. Then the following
%  statements are equivalent:
%  \begin{itemize}
%  \item[1.] $M $ is geodesically complete;
%  \item[2.] The closed and bounded subsets of $M$ are compact;
%  \item[3.] $M$ is a complete metric space.
%  \end{itemize}
%  \end{thm}

We aim to finish this section with the following definition:

\begin{df}[Local parallel transport]
Let $\mathcal{B} \subset M$ be a bounded subset and let $x,y\in \mathcal{B} $ two different points satisfying $d(x,y) \leq\rho (\mathcal{B}) \leq c(\BB)$, then there exists a unique geodesic curve $\gamma$ with $\gamma(0)=x$ and $\gamma(1)=y$ and so $ \Gamma_{x,y} $ is well-defined. 
 Moreover, we denote by $L_{x\rightarrow y}:T_xM \rightarrow T_yM$ the tangential parallel transport defined as follows: for all $v\in T_xM$, $L_{x\rightarrow y}(v)=V(1)$ where $V$ is the unique vector field satisfying $V(0)=v$ and $ {\nabla^\star}_{\dot \gamma(t)} V(t)=0$. The map $L_{x\rightarrow y}$ also defines a linear isometry between $T_xM$ and $T_yM$ with $L_{x\rightarrow y}^{-1}=L_{y\rightarrow x}$.
 \noindent Moreover it satisfies
\be{eq:symmetry} L_{x\rightarrow y}\left(\Gamma_{x,y}\right) = -\Gamma_{y,x}. \ee
\label{def:partrans}
\end{df}

%%%%%%%%%%%%%%%%%%%%%%%%%%%%%%%%%%%%%%%%%%%%%%%%%%%%%%%%%%%%%%%%%%%%%%%%%%%%%%%%%%%
%%%%%%%%%%%%%%%%%%%%%%%%%%%%%%%%%%%%%%%%%%%%%%%%%%%%%%%%%%%%%%%%%%%%%%%%%%%%%%%%%%%
%%%%%%%%%%%%%%%%%%%%%%%%%%%%%%%%%%%%%%%%%%%%%%%%%%%%%%%%%%%%%%%%%%%%%%%%%%%%%%%%%%%

\section{Technical preliminaries about convex analysis and the squared distance} \label{sec:pre2}

In this section, we aim to extend the notion of {\it prox-regular subset} to the setting of Riemannian Hilbert manifold and to understand how the main property of {\it hypomonotonicity} (of the proximal normal cone) is modified (see Theorem \ref{thm:hypo}).

In all the sequel we consider a manifold $M$ which satisfies Assumption \ref{ass:ic}. 
We first recall some properties about the smoothness of the squared distance:

\begin{prop}[Proposition 2.2 \cite{AF}] For every bounded subset $\mathcal{B}$ of $M$, there exists $C_{\rho(\mathcal{B})} >0$ such that  
for all $p,q \in \mathcal{B} $ satisfying $d(p,q)\leq \rho(\mathcal{B}) $:
\be{hyp:H} \|H_ x d^2(p,q) \|_p \leq C_{\rho(\mathcal{B})}, \ee
where $$H_ x d^2(p,q) := D^2 \phi(p)$$ with $$\phi(p):= d^2(p,q) \textmd{ and } \| \Psi \|_p :=  \underset {v \in T_pM, |v|=1} {\sup} |\Psi [v,v]|.$$
\label{propH}
\end{prop}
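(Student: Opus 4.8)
The plan is to compute the Hessian of $\phi=d^2(\cdot,q)$ through the second fundamental form of the geodesic spheres centred at $q$, and then to control that second fundamental form by the curvature bound $\kappa:=|K(\mathcal{B})|$ via Jacobi-field (Rauch) comparison. Fix $q\in\mathcal{B}$ and a point $p$ with $\ell:=d(p,q)\le\rho(\mathcal{B})$. Since $\rho(\mathcal{B})\le\min\{i(\mathcal{B}),c(\mathcal{B})\}$, the point $p$ lies strictly inside the injectivity and convexity radii of $q$, so there is a unique minimizing unit-speed geodesic $\gamma\colon[0,\ell]\to M$ with $\gamma(0)=q$, $\gamma(\ell)=p$, and $p$ is not in the cut locus of $q$; consequently $\phi$ is smooth near $p$. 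Writing $r:=d(\cdot,q)$, one has $\nabla r(p)=\dot\gamma(\ell)$ and
\[ D^2\phi = 2\,dr\otimes dr + 2r\,D^2 r. \]

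Because $\nabla r=\dot\gamma$ is a geodesic field, $D^2r[\dot\gamma,\cdot]=\langle\nabla^\star_{\dot\gamma}\dot\gamma,\cdot\rangle=0$, so $\dot\gamma(\ell)$ is a radial eigendirection with $D^2\phi[\dot\gamma(\ell),\dot\gamma(\ell)]=2$ and there are no cross terms. Decomposing any unit $v\in T_pM$ as $v=a\,\dot\gamma(\ell)+w$ with $w\perp\dot\gamma(\ell)$ and $a^2+|w|^2=1$ gives
\[ D^2\phi[v,v] = 2a^2 + 2\ell\,D^2r[w,w], \]
so $\|H_x d^2(p,q)\|_p=\sup_{|v|=1}|D^2\phi[v,v]|$ is controlled as soon as $D^2r[w,w]$ is bounded uniformly over unit $w\perp\dot\gamma(\ell)$. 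On $(\dot\gamma(\ell))^\perp$ the form $D^2r$ is exactly the second fundamental form of the sphere $\{r=\ell\}$, read off the Jacobi fields along $\gamma$: for unit $w\perp\dot\gamma(\ell)$ let $J$ be the unique Jacobi field with $J(0)=0$ and $J(\ell)=w$; then, with $R$ the Riemann curvature tensor,
\[ D^2r[w,w] = \langle \nabla^\star_{\dot\gamma}J(\ell),\,w\rangle = \int_0^\ell\!\Big(\,|\nabla^\star_{\dot\gamma}J|^2 - \langle R(J,\dot\gamma)\dot\gamma,\,J\rangle\,\Big)\,dt, \]
the last equality being the index form, obtained by integrating the Jacobi equation by parts.

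Using $|K(\sigma_{\gamma(t)})|\le\kappa$ and Rauch (Hessian) comparison — whose proof compares, one field at a time, the index form of $J$ with that of the model Jacobi field in the $2$-plane $\mathrm{span}\{J(t),\dot\gamma(t)\}$, so that infinite dimension is harmless — one obtains
\[ \sqrt{\kappa}\,\cot(\sqrt{\kappa}\,\ell)\ \le\ D^2r[w,w]\ \le\ \sqrt{\kappa}\,\coth(\sqrt{\kappa}\,\ell), \]
interpreting both sides as $1/\ell$ when $\kappa=0$. The lower bound is meaningful because $\sqrt{\kappa}\,\ell\le\sqrt{\kappa}\,\rho(\mathcal{B})\le\pi/2$ by the very definition of $\rho(\mathcal{B})$ in Assumption \ref{ass:ic}, so $\cot(\sqrt{\kappa}\,\ell)\ge0$. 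The crucial point is that this estimate depends on $w$ only through $\ell$ and $\kappa$, hence is uniform over the (possibly infinite-dimensional) unit sphere of $(\dot\gamma(\ell))^\perp$. Multiplying by $2\ell$ and setting $t:=\sqrt{\kappa}\,\ell\in(0,\pi/2]$ pins the tangential values $2\ell\,D^2r[w,w]$ between $2t\cot t$ and $2t\coth t$; since $t\mapsto t\coth t$ increases from $1$ to $(\pi/2)\coth(\pi/2)$ and $t\mapsto t\cot t$ decreases from $1$ to $0$ on $(0,\pi/2]$, we get $0\le 2\ell\,D^2r[w,w]\le\pi\coth(\pi/2)$. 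Together with the radial value $2$ and the cross-term-free decomposition, this yields $|D^2\phi[v,v]|\le C\,|v|^2$ with $C:=\max\{2,\pi\coth(\pi/2)\}$, which is the asserted estimate (remarkably independent of $\mathcal{B}$, precisely because the threshold $\pi/(2\sqrt{\kappa})$ is built into $\rho(\mathcal{B})$).

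The main obstacle is the infinite-dimensional justification of the two classical ingredients: the smoothness of $\phi$ inside the cut locus, guaranteed here by $\rho(\mathcal{B})\le\min\{i(\mathcal{B}),c(\mathcal{B})\}$, and Rauch comparison on a Hilbert manifold. The latter is legitimate because each estimate localizes to a single Jacobi field and a scalar ODE (Riccati) comparison in a fixed $2$-plane, and the resulting bound is uniform in $w$. An alternative, more computational route would work in the exponential chart at $q$, writing $\phi=|\exp_q^{-1}(\cdot)|^2$ and differentiating twice while invoking the $C^2$-control of $\exp$ and the Lipschitz bound on $D\exp^{-1}$ from Assumption \ref{ass:ic}; but the Jacobi-field argument isolates the curvature dependence most transparently.
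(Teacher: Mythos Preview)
The paper does not supply a proof of this proposition at all: it is quoted verbatim from Azagra--Fry \cite{AF} (their Proposition~2.2) and used as a black box. So there is no ``paper's own proof'' to compare against; your write-up is a self-contained argument where the authors chose to import the result.

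That said, your argument is the standard and correct one. The decomposition $D^2\phi = 2\,dr\otimes dr + 2r\,D^2r$, the radial/tangential splitting, and the Hessian (Rauch) comparison
\[
\sqrt{\kappa}\,\cot(\sqrt{\kappa}\,\ell)\ \le\ D^2r[w,w]\ \le\ \sqrt{\kappa}\,\coth(\sqrt{\kappa}\,\ell)
\]
under two-sided sectional curvature bounds are exactly the classical route, and your observation that the comparison reduces to a scalar Riccati inequality along each Jacobi field is the right way to justify the infinite-dimensional case. The resulting constant $\max\{2,\pi\coth(\pi/2)\}=\pi\coth(\pi/2)$ is indeed independent of $\mathcal{B}$, which is a nice bonus the paper does not emphasize. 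One minor quibble: you write that $p$ lies \emph{strictly} inside the injectivity radius, but the hypothesis only gives $d(p,q)\le\rho(\mathcal{B})\le i(\mathcal{B})$; if equality holds throughout you could in principle be at the cut locus. This is harmless (argue for $d(p,q)<\rho(\mathcal{B})$ and pass to the limit, or note that the Hessian bound extends by continuity), but worth a one-line remark.
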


\begin{lem} \label{lem:geodesic} For all bounded set $ \K$ included in
  $M$, for all $ \tau \in [0,1]$ and $x,y\in \K$ satisfying $d(x,y) < \rho(\mathcal{B}) $, we have 
\be{hyp:geod} |\ddot{\gamma}_{x,\Gamma_{x,y} }(\tau) | \leq L(\mathcal{B})
|\Gamma_{x,y}|^2 = L(\mathcal{B})  d(x,y)^2,   \ee
where $ {\gamma}_{x,\Gamma_{x,y} }(\tau)  =   \exp_x(\tau \Gamma_{x,y}) =  \exp_x(\tau \, \exp_x^{-1} ( y))$. 
\end{lem}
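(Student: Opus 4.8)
The plan is to read the geodesic through its explicit formula $\gamma_{x,\Gamma_{x,y}}(\tau)=\exp_x(\tau\,\Gamma_{x,y})$, differentiate it twice in $\tau$, and control the outcome by the uniform $C^2$‑bound on $\exp$ granted by Assumption~\ref{ass:ic}, namely~\eqref{eq:exp}. Observe first that $\ddot\gamma$ here cannot mean the covariant acceleration $\nabla^\star_{\dot\gamma}\dot\gamma$ (which vanishes, $\gamma$ being a geodesic): it is the ordinary second $\tau$-derivative of $\gamma$ read in a chart of $M$, consistently with the fact that the right‑hand side involves the chart‑wise quantity $\|\exp\|_{C^2}$. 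So I would fix $x,y\in\mathcal B$ with $d(x,y)<\rho(\mathcal B)$, set $v:=\Gamma_{x,y}=\exp_x^{-1}(y)\in T_xM$ so that $|v|=d(x,y)<\rho(\mathcal B)$ by~\eqref{normgam}, and note that $\gamma:=\gamma_{x,v}$ is a constant‑speed geodesic issued from $x$ with $|\dot\gamma(\tau)|\equiv|v|$, hence $d(x,\gamma(\tau))\le\tau|v|\le|v|<\rho(\mathcal B)$ for every $\tau\in[0,1]$. Consequently all the points involved (base points in $\mathcal B$, their images $\gamma(\tau)$) lie in a fixed bounded subset of $M$ — e.g. the closed $\diam(\mathcal B)$-neighbourhood of $\mathcal B$ — so, enlarging $\mathcal B$ to that set if necessary and still writing $L(\mathcal B)$ for the corresponding constant, I may invoke~\eqref{eq:exp} freely.

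Next I would view $\tau\mapsto\gamma(\tau)=\exp_x(\tau v)$ as the composition of the \emph{affine} map $\tau\mapsto\tau v$, valued in $T_xM$, with the smooth map $\exp_x\colon T_xM\to M$. Working in local coordinates on $M$ near $\gamma([0,1])$, and since the inner affine map has vanishing second $\tau$-derivative, the chain rule gives
\[
\dot\gamma(\tau)=D\exp_x(\tau v)[v],\qquad \ddot\gamma(\tau)=D^2\exp_x(\tau v)[v,v],
\]
with \emph{no} surviving first‑order contribution (the term $D\exp_x(\tau v)[\tfrac{d}{d\tau}v]$ is zero because $v$ is constant in $\tau$). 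Hence, using only the elementary inequality $\|B[v,v]\|\le\|B\|\,|v|^2$ valid for any bounded symmetric bilinear form $B$,
\[
|\ddot\gamma(\tau)|=\big|D^2\exp_x(\tau v)[v,v]\big|\le\|D^2\exp_x(\tau v)\|\,|v|^2\le\|\exp\|_{C^2(\mathcal B)}\,|v|^2\le L(\mathcal B)\,|v|^2,
\]
and $|v|^2=|\Gamma_{x,y}|^2=d(x,y)^2$ by~\eqref{normgam}, which is exactly the announced bound.

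The only genuinely delicate point is the bookkeeping around this chain‑rule step: one must make sure that differentiating $\exp_x(\tau\,\Gamma_{x,y})$ along the \emph{straight ray} $\tau\mapsto\tau\,\Gamma_{x,y}$ of $T_xM$ kills every term except the pure fibre second differential of $\exp$ — which is precisely why the statement concerns the geodesic issued from $x$ and not an arbitrary parametrised curve — and that the argument $\tau\,\Gamma_{x,y}$, together with its image $\gamma(\tau)$, never leaves the bounded region where~\eqref{eq:exp} is available; this is ensured by $d(x,y)<\rho(\mathcal B)$ and the enlargement of $\mathcal B$ above. Once these are settled, the estimate is just ``bounded bilinear form times the square of a norm''. (If one prefers a metric‑intrinsic viewpoint, the same bound follows from the geodesic equation read in a chart, which expresses $\ddot\gamma(\tau)$ through the Christoffel symbols evaluated on $\dot\gamma(\tau)$, whose norm is the constant $|v|$, the Christoffel symbols being bounded on $\mathcal B$ by Assumption~\ref{ass:ic}.)
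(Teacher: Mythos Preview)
Your argument is correct and is essentially identical to the paper's own proof: write $\gamma(\tau)=\exp_x(\tau\,\Gamma_{x,y})$, differentiate twice in $\tau$ to obtain $\ddot\gamma(\tau)=D^2\exp_x(\tau\,\Gamma_{x,y})[\Gamma_{x,y},\Gamma_{x,y}]$, and apply the $C^2$-bound~\eqref{eq:exp}. Your extra remarks (that $\ddot\gamma$ is the chart-wise derivative rather than $\nabla^\star_{\dot\gamma}\dot\gamma$, and the bookkeeping ensuring the curve stays in a bounded set where~\eqref{eq:exp} applies) are welcome clarifications that the paper leaves implicit, but the route is the same.
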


\begin{proof} Let $x,y\in \K$ and consider for $\tau \in[0,1]$
$$ \gamma_{x,\Gamma_{x,y}}(\tau) = \exp_x(\tau \Gamma_{x,y}).$$
By differentiating in $\tau$, we get
$$ \ddot \gamma_{x,\Gamma_{x,y}}(\tau)  = D^2 \exp_x (\tau \Gamma_{x,y}) [\Gamma_{x,y},\Gamma_{x,y}].$$ 
Then the assumption \eqref{eq:exp} gives the expected result. 
\end{proof}

As a particular case of \cite[Theorem 1.2]{Karcher} (with a constant function $f$), we recall the link between the exponential map and the gradient of the squared distance.

\begin{lem} \label{lemb} For every bounded set $\BB \subset M $, for all $x,y\in \BB$ satisfying $d(x,y) < \rho(\BB)$, we have
\begin{equation}
  \label{eq:gamma} \nabla_x d^2(x,y) = 2 d(x,y) \nabla_x d(x,y)= -2 \Gamma_{x,y}=-2 \exp_x^{-1}(y) . 
  \end{equation}
\end{lem}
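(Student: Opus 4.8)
The plan is to identify $\nabla_x d^2(x,y)$ as the derivative of the function $p\mapsto d^2(p,y)$ at $p=x$, and to compute this derivative by differentiating along a well-chosen curve, namely the minimizing geodesic emanating from $x$ toward $y$. The key point is that, since $d(x,y)<\rho(\BB)\leq c(\BB)$, there is a unique length-minimizing geodesic joining $x$ and $y$, so $\Gamma_{x,y}=\exp_x^{-1}(y)$ is well-defined, and the squared-distance function is smooth near $x$ by Proposition \ref{propH}.

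\textbf{Step 1.} First I would reduce the identity $2d(x,y)\nabla_x d(x,y)=\nabla_x d^2(x,y)$ to the chain rule: away from $x=y$, the function $d(\cdot,y)$ is smooth in a neighborhood of $x$ (again using $d(x,y)<\rho(\BB)$), so $\nabla_x d^2(x,y)=2 d(x,y)\,\nabla_x d(x,y)$ is immediate; the case $x=y$ is trivial since both sides vanish. So the content is the identification of $\nabla_x d(x,y)$, equivalently of $\nabla_x d^2(x,y)$, with $-2\Gamma_{x,y}$.

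\textbf{Step 2.} To compute $\nabla_x d^2(x,y)$, fix a unit vector $u\in T_xM$ and consider the curve $\sigma(s)=\exp_x(su)$ for $s$ small. I would use the first variation of arclength (or equivalently invoke Karcher's formula from \cite[Theorem 1.2]{Karcher} with $f$ constant, as the statement suggests): the derivative at $s=0$ of $s\mapsto d^2(\sigma(s),y)$ equals $\langle \nabla_x d^2(x,y), u\rangle_x$, and the first variation formula for the geodesic from $x$ to $y$ gives this derivative as $-2\langle \Gamma_{x,y},u\rangle_x$, since the initial velocity of the unit-speed minimizing geodesic from $x$ to $y$ is $\Gamma_{x,y}/|\Gamma_{x,y}|=\Gamma_{x,y}/d(x,y)$ and the derivative of $d(\cdot,y)$ in direction $u$ is $-\langle \Gamma_{x,y}/d(x,y),u\rangle_x$. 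Multiplying by $2d(x,y)$ and using \eqref{normgam} yields $\nabla_x d^2(x,y)=-2\Gamma_{x,y}$. Finally, the identity $\Gamma_{x,y}=\exp_x^{-1}(y)$ is exactly the remark following Definition \ref{def:partrans} (see also the discussion around \eqref{normgam}).

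\textbf{Main obstacle.} The only delicate point is justifying the first variation computation in the infinite-dimensional Hilbert manifold setting and the smoothness of $d^2(\cdot,y)$ near $x$; both are guaranteed here because $d(x,y)<\rho(\BB)$ forces $x$ and $y$ to lie within a common convex ball where the exponential map is a diffeomorphism (Assumption \ref{ass:ic}) and $d^2(\cdot,y)$ has bounded Hessian (Proposition \ref{propH}). In fact the cleanest route is to bypass the variational argument entirely and just cite \cite[Theorem 1.2]{Karcher} directly with a constant function, as indicated in the statement, so that the proof is essentially a one-line specialization once the hypotheses $d(x,y)<\rho(\BB)$ are checked to place us in the regime where that theorem applies.
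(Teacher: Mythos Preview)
Your proposal is correct. The approach is close in spirit to the paper's, but the concrete computation differs in an instructive way.

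The paper differentiates only along the single radial direction $\Gamma_{x,y}$: from $d(\gamma_{x,\Gamma_{x,y}}(t),y)=(1-t)d(x,y)$ one gets $\langle \nabla_x d(x,y),\Gamma_{x,y}\rangle=-d(x,y)$, and then the equality case in Cauchy--Schwarz (using $|\nabla_x d(x,y)|=1$ and $|\Gamma_{x,y}|=d(x,y)$) forces $\nabla_x d(x,y)=-\Gamma_{x,y}/d(x,y)$. You instead compute the directional derivative in an arbitrary direction $u$ via the first variation of arclength, which identifies the gradient component by component without appealing to the Cauchy--Schwarz equality case. Your route is the more standard textbook argument and is self-contained once the first variation formula is granted; the paper's route is slightly slicker (one direction suffices) but tacitly uses that $|\nabla_x d(\cdot,y)|=1$, which itself needs a word of justification. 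Both rely on exactly the same hypothesis $d(x,y)<\rho(\BB)$ to ensure smoothness of $d^2(\cdot,y)$ near $x$ and uniqueness of the minimizing geodesic, and both are, as you note, special cases of Karcher's theorem with a constant weight.
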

\noindent For the sake of readability, we give a short proof.

\begin{proof} By definition of $\rho(\BB)$, there exists a unique geodesic $\gamma_{x,\Gamma_{x,y}}$ such that $\gamma_{x,\Gamma_{x,y}}(1)=y$. For all $t\in[0,1)$, the constant velocity along the geodesic implies that
$$ d(\gamma_{x,\Gamma_{x,y}}(t),y) = d(x,y) -t|\Gamma_{x,y}| = (1-t) d(x,y).$$
By differentiating at $t=0$, we get
$$\langle \nabla_x d(\gamma_{x,\Gamma_{x,y}}(0),y), \dot \gamma_{x,\Gamma_{x,y}}(0) \rangle = \langle \nabla_x d(x,y), \Gamma_{x,y}\rangle = -d(x,y).$$
From $|\nabla_x d(x,y)|=1$ and $|\Gamma_{x,y}|=d(x,y)$, we deduce (\ref{eq:gamma}).
 \end{proof}

\begin{rem}
With the notations of Proposition \ref{propH}, $\nabla_x d^2(p,q)= \nabla \phi (p)$. We deduce from this proposition and from (\ref{eq:gamma}) that the map $p \mapsto \exp_p^{-1}(q) $ is smooth on $\BB$. 
\label{rem:smoothexp}
\end{rem}

\begin{rem}
In the same way,  $$\nabla_y d^2(x,y) = 2 d(x,y) \nabla_y d(x,y)= -2 \Gamma_{y,x}=-2 \exp_y^{-1}(x).$$
\label{rem:nablay}
\end{rem}

\begin{df} Let $C$ be a closed subset of $M$ and $x\in C$. The proximal normal cone to $C$ at $x$ is defined as follows:
$$ \NN(C,x):=\left\{ v\in T_xM,\ \exists \varepsilon>0,\ x\in \PPP_C(\gamma_{x,v}(\varepsilon))\right\},$$
 where $\PPP_C$ is the projection onto $C$
 $$ \PPP_C(y):= \left\{z\in C,\ d(y,z)=d_C(y)=\inf_{c\in C} d(y,c) \right\}.$$
 \label{def:cone}
\end{df}

\noindent Note that $\NN(C,x) $ is a cone because for all $\alpha >0$, for all $
v \in \NN(C,x)  $, $\alpha v  $ belongs to $\NN(C,x) $ due to the
equality $\gamma_{x,v}(\varepsilon) = \gamma_{x,\alpha v}\left(\frac{\varepsilon}{\alpha}\right)  $. 

\begin{df} Consider a nonempty closed subset $C\subset M$, $ C$ is said locally prox-regular if for all bounded set $\mathcal{B} \subset M $, there exists $ 0 <\eta_\BB \leq \rho(\BB)$ such that for every  $t\in[0,\eta_\BB)$, $x\in C \cap \BB$ and $v\in \NN(C,x) \setminus \{0\}$
$$ x\in \PPP_C\left(\gamma_{x,\frac{v}{|v|}}(t) \right).$$
\label{def:prox}
\end{df}

\begin{rem}
Note that we can define a uniformly prox-regular set : a closed subset $ C\subset M $ will be called $\eta
  $-prox-regular or uniformly prox-regular with constant $\eta$ if for all $t\in[0,\eta)$, $x\in C$ and $v\in \NN(C,x) \setminus \{0\}$, 
$$ x\in \PPP_C\left(\gamma_{x,\frac{v}{|v|}}(t) \right).$$
However this definition is meaningful if $ i(M) > 0$, $c(M)> 0 $ and $|K(M)| < + \infty$ (for example il the framework of compact manifolds). 
\label{rem:unifprox}
\end{rem}

\noindent Now we want to prove that the local prox-regularity provides a kind of hypomonotonicity property. This result is the purpose of the following lemma. 

\begin{lem} \label{lem:hypo} Let $C$ be a closed subset of $M$ and assume that $C$ is locally prox-regular. 
Then for all bounded subset $\K$ of
  $M$, there exists a constant $\A_\K >0$ such that for all $x,y\in C
  \cap \K$ satisfying $d(x,y) \leq \rho(\BB)$ and $v\in \NN(C,x)$
$$  \langle v,  \Gamma_{x,y} \rangle_{T_xM} \leq \A_\K |v|  d(x,y)^2.$$ 
\end{lem}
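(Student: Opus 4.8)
\emph{Plan of proof.} First, by the cone property of $\NN(C,x)$ noted after Definition \ref{def:cone} and the positive homogeneity of degree one in $v$ of both sides, it suffices to treat the case $|v|=1$ (the case $v=0$ being trivial). Fix a bounded set $\BB$ containing $\K$ together with, say, its $1$-neighborhood, and write $\eta:=\eta_\BB>0$, $\rho:=\rho(\BB)>0$ for the constants of Definition \ref{def:prox} and Assumption \ref{ass:ic}. Choose once and for all a fixed $t_0$ with $0<t_0<\min(\eta,\rho,1)$, small enough (only in terms of $\BB$) that the hypotheses of Proposition \ref{propH} and Lemma \ref{lemb} hold for any two points of $\BB$ at mutual distance $\le t_0+\rho$, and set $z:=\gamma_{x,v}(t_0)=\exp_x(t_0 v)$; note $z\in\BB$ and $d(z,x)=t_0$ since $\gamma_{x,v}$ has unit speed.

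Second, I use the hypothesis. Since $v\in\NN(C,x)\setminus\{0\}$ has unit norm, Definition \ref{def:prox} gives $x\in\PPP_C(\gamma_{x,v}(t_0))=\PPP_C(z)$, so $d(z,x)\le d(z,c)$ for every $c\in C$. Taking $c=y$ and recalling $d(z,x)=t_0$ yields
$$ d(z,y)^2\ \ge\ t_0^2 . $$

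Third --- and this is the crux --- I compare $d(z,y)^2$ with its Euclidean surrogate. For $y$ in a small ball about $x$ set
$$ \Phi(y)\ :=\ d(z,y)^2 - d(x,y)^2 + 2t_0\,\langle v,\exp_x^{-1}(y)\rangle_{T_xM} - t_0^2 , $$
which is smooth near $x$ by Remark \ref{rem:smoothexp} and Assumption \ref{ass:ic}. From $d(z,x)=t_0$ and $\exp_x^{-1}(x)=0$ one gets $\Phi(x)=0$; and from Remark \ref{rem:nablay} (so $\nabla_y d^2(z,y)|_{y=x}=-2\exp_x^{-1}(z)=-2t_0 v$ and $\nabla_y d^2(x,y)|_{y=x}=0$) together with $D\exp_x^{-1}(x)=\mathrm{id}_{T_xM}$, one gets $\nabla_y\Phi(x)=-2t_0 v+2t_0 v=0$. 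Thus $\Phi$ vanishes to second order at $y=x$, so a second-order Taylor expansion along the minimizing geodesic from $x$ to $y$ --- whose acceleration is controlled by Lemma \ref{lem:geodesic} --- combined with the uniform Hessian bound of Proposition \ref{propH} and the uniform $C^2$-bounds on $\exp$ and $\exp^{-1}$ from Assumption \ref{ass:ic}, produces a constant $C_\BB>0$ and a radius $\rho_0=\rho_0(\BB)>0$, depending only on $\BB$, with
$$ |\Phi(y)|\ \le\ C_\BB\, d(x,y)^2\qquad\text{whenever}\quad d(x,y)\le\rho_0 , $$
the smallness of $\rho_0$ ensuring in particular that $d(x,y)\le\rho$ and $d(z,y)\le t_0+\rho_0\le\rho$, so that Proposition \ref{propH} indeed applies to the pairs $(x,y)$ and $(z,y)$. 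Since $\exp_x^{-1}(y)=\Gamma_{x,y}$ and $d(z,y)^2\ge t_0^2$, this gives, for $d(x,y)\le\rho_0$,
$$ 0\ \le\ d(z,y)^2-t_0^2\ =\ -2t_0\langle v,\Gamma_{x,y}\rangle + d(x,y)^2 + \Phi(y)\ \le\ -2t_0\langle v,\Gamma_{x,y}\rangle + (1+C_\BB)\, d(x,y)^2 , $$
hence $\langle v,\Gamma_{x,y}\rangle\le\frac{1+C_\BB}{2t_0}\,d(x,y)^2$.

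Finally, when $d(x,y)\ge\rho_0$ one bounds directly by Cauchy--Schwarz and \eqref{normgam}: $\langle v,\Gamma_{x,y}\rangle\le|\Gamma_{x,y}|=d(x,y)\le\rho_0^{-1}d(x,y)^2$. Taking $\A_\K:=\max\{\tfrac{1+C_\BB}{2t_0},\,\rho_0^{-1}\}$ (which depends only on $\K$) and undoing the normalization $|v|=1$ completes the argument. The one real difficulty lies in the third step: a naive application of Proposition \ref{propH} only bounds $\Phi$ by a constant, and one genuinely has to exploit that the defect between $d(z,y)^2$ and its Euclidean surrogate vanishes together with its first variation at $y=x$. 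Geometrically this reflects the fact that, for $y$ close to $x$, the geodesic $\gamma_{x,v}$ is nearly radial relative to $y$, so the Hessian of $d^2(\cdot,y)$ evaluated on $\dot\gamma_{x,v}$ deviates from its radial value $2$ by only $O(d(x,y)^2)$.
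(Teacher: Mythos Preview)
Your argument is correct and follows essentially the same route as the paper: use local prox-regularity to get $d(\gamma_{x,v}(t_0),y)^2\ge t_0^2$, then perform a second-order Taylor expansion of the squared distance and bound the remainder via Proposition~\ref{propH} and Lemma~\ref{lem:geodesic}, handling the large-distance case by Cauchy--Schwarz. The paper's version is marginally more direct: it expands $s\mapsto d^2(\gamma_{x,\Gamma_{x,y}}(s),m)$ with $m=\gamma_{x,v}(t)$, so the first-order term is read off immediately as $\langle\nabla_x d^2(x,m),\Gamma_{x,y}\rangle=-2t\langle v,\Gamma_{x,y}\rangle$ and only the Hessian of $d^2(\cdot,m)$ needs controlling---there is no need to introduce the auxiliary function $\Phi$, to subtract $d(x,y)^2$, or to bound the second variation of $y\mapsto\langle v,\exp_x^{-1}(y)\rangle$.
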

\begin{proof} Let $x,y\in C \cap \K$ and $v\in\NN(C,x)$. We can assume $|v|=1$ without loss of generality. We define $\tilde{\BB}:=\{m \in M, d(m, \BB)\leq \eta_\BB\}$  (where $\eta_\BB$ is introduced in Definition \ref{def:prox}) and we set $r:=\min(\eta_\BB ,\rho({\tilde{\BB}}) )$.
If $d(x,y)\geq r/2 $, the result is easily proved by (\ref{normgam}) with $\A_\K=2/r$. \\
Consider also $x,y$ with $d(x,y) < r/2$. Since $C$ is locally prox-regular, for all $t\in(0,r/2)$
$$ x\in \PPP_C\left(\gamma_{x, v}(t) \right)$$
which implies
$$ t^2 = d(x, \gamma_{x, v}(t))^2 \leq d(y, \gamma_{x, v}(t))^2.$$
By a second order expansion and setting $m:=\gamma_{x, v}(t)$, we get thanks to (\ref{eq:gamma})
\begin{align*}
  d(y,m)^2 & =  d(x,m)^2 +  \int_0^1 \left[ \frac{d}{d\tilde s}  d ^2 (\gamma_{x, \Gamma_{x,y}}(\tilde s),m) \right] _{|\tilde s = s}ds \\
  & =  d(x,m)^2 +  2 d(x,m)\langle \nabla_x d(x,m),\Gamma_{x,y}\rangle \\ 
  & \quad + \int_0^1 \int_0^s \left[\frac{d^2}{d\tilde s^2} d ^2 (\gamma_{x, \Gamma_{x,y}}(\tilde s),m)\right] _{|\tilde s = \tau} d\tau ds.
\end{align*}
Consequently, 
\begin{align*}
 -2 d(x,m)\langle \nabla_x d(x,m,\Gamma_{x,y}\rangle & \leq  \int_0^1 \int_0^s \left[\frac{d^2}{d\tilde s^2} d ^2 (\gamma_{x, \Gamma_{x,y}}(\tilde s),m)\right] _{|\tilde s = \tau} d\tau ds \\
 & \leq \int_0^1 \int_0^s \left[ H_x d^2(\gamma_{x, \Gamma_{x,y}}(\tau),m)[\dot \gamma_{x,\Gamma_{x,y}}(\tau),\dot \gamma_{x,\Gamma_{x,y}}(\tau)] \right. \\
& \qquad \qquad \left. + \langle \nabla_x d^2(\gamma_{x,
    \Gamma_{x,y}}(\tau),m), \ddot \gamma_{x,\Gamma_{x,y}}(\tau)
  \rangle  \right] d\tau ds.
\end{align*} 
Since $d(x,\gamma_{x,\Gamma_{x,y}}(\tau)) \leq  d(x,y) < r/2 $ and $d(x, \gamma_{x,v}(t)) \leq t\leq r/2$, it is clear that $\gamma_{x,\Gamma_{x,y}}(\tau) $ and $\gamma_{x,v}(t)$  belong to $B(x,r/2)\subset \tilde{\BB} $.
 Moreover\be{eq:di} d(\gamma_{x, \Gamma_{x,y}}(\tau),m)=
d(\gamma_{x, \Gamma_{x,y}}(\tau),\gamma_{x,v}(t))<r/2+ r/2< r  \leq \rho(\tilde{\BB}). \ee Thus by
Proposition \ref{propH}, $$| H_x d^2(\gamma_{x, \Gamma_{x,y}}(\tau),m)[\dot
\gamma_{x,\Gamma_{x,y}}(\tau),\dot \gamma_{x,\Gamma_{x,y}}(\tau)] |\leq
C_{\rho(\tilde{\BB})} |\Gamma_{x,y}|^2 .$$
 Furthermore Lemma \ref{lem:geodesic} with (\ref{eq:di}) gives  $$\langle \nabla_x d^2(\gamma_{x,
    \Gamma_{x,y}}(\tau),m), \ddot \gamma_{x,\Gamma_{x,y}}(\tau)
  \rangle \leq 2 d(\gamma_{x,
    \Gamma_{x,y}}(\tau),m) L(\tilde{\BB}) |\Gamma_{x,y}|^2\leq 2 r L(\tilde{\BB})
  |\Gamma_{x,y}|^2.  $$
Thus by adding the previous inequalities we obtain 
\begin{align*}
 -2 d(x,m)\langle \nabla_x d(x,m),\Gamma_{x,y}\rangle & \leq C_{\rho(\tilde{\BB})}
 |\Gamma_{x,y}|^2 + 2 r L(\tilde{\BB})  |\Gamma_{x,y}|^2.
\end{align*}
So we deduce from Lemma \ref{lemb}  and (\ref{normgam}) that
$$  \langle \exp_x^{-1}( \gamma_{x,v}(t) ), \Gamma_{x,y}\rangle \leq
\frac{C_{\rho(\tilde{\BB})}+2 r L(\tilde{\BB})}{2} d(x,y)^2.$$
As $\exp_x^{-1}( \gamma_{x,v}(t) ) = t v $, it comes 
$$  \langle v, \Gamma_{x,y}\rangle \leq
\frac{C_{\rho(\tilde{\BB})}+2 r L(\tilde{\BB}) }{ 2t} d(x,y)^2.$$
This inequality holds for all $t \in (0 , \ r/2)$ which ends the proof by
setting $\A_\K:=  \max(C_{\rho(\tilde{\BB})}/r+2 L(\tilde{\BB}), 2/r)  .$

\end{proof}

\begin{rem}
Note that the constants $E_\BB$ depend on the geometrical features of $M$ but also on $C$ through the constant $\eta_\BB$.   
\label{rem:constantE}
\end{rem}

\begin{rem} \label{rem:hypo} Let $C$ be a closed subset of $M$ and $x \in C$. If $v \in \NN(C,x)$ then there exists $\alpha>0$ and $\Lambda>0$ such that for all $y\in C
  \cap B(x,\alpha)$
$$  \langle v, \Gamma_{x,y} \rangle_{T_xM} \leq \Lambda |v | d(x,y)^2.$$
Indeed, assume that $|v|= 1$ (without loss of generality) since $v \in \NN(C,x)$, there exists $\varepsilon >0$, such that  $x \in \PPP_C(\gamma_{x, v}(\varepsilon))$ by Definition \ref{def:cone}. Now it suffices to fix $\BB= B(x, \varepsilon)$ and take $\alpha := \min(\varepsilon, \rho(\BB)/2)$. Thus for all $t \leq \alpha $, $ x \in \PPP_C(\gamma_{x, v}(t)) $. By following the previous proof, we can choose $\Lambda:= \frac{C_{\rho(\BB)}}{2 \alpha} + 2 L(\BB) $. Here $\alpha$ and $\Lambda$ depend on $v$ and may degenerate.  The important property of a locally prox-regular set is that these two quantities may be assumed to be independent on $v$, as proved in Lemma \ref{lem:hypo}. 
\end{rem}

The following statement will be useful in Lemma \ref{lem:hypo2} to check that the hypomonotonicity property (which is detailed in Lemma \ref{lem:hypo}) implies the local prox-regularity. Moreover its corollary will allow us to show that  the projection onto a locally prox-regular set is single-valued in its neighborhood.

\begin{lem}[{\cite[Theorem 1.2]{Karcher}}] \label{lem:curvature}  Consider a manifold $M$  satisfying Assumption \ref{ass:ic} and $\mathcal{B}$ a bounded subset of $M$. 
 \begin{itemize}
 \item If $K(\mathcal{B}) \leq 0$
   then for every $x\in \BB$ and along any geodesic $\gamma$ taking
   values in $B(x, r) \subset \BB $ with $r \leq c(\mathcal{B})$,
 $$ \frac{d^2}{dt^2} d^2(\gamma(t),z) \geq 2 |\dot \gamma(t)|^2.$$

 \item If $ 0< K(\mathcal{B}) \leq \delta $,
  with $S(t):= t \cot (t)$ and \hbox{$r \in \left] 0, \rho(\mathcal{B}) \right]$}, we have for all
   $x\in \BB$ and along any geodesic $\gamma$ taking values in $B(x, r) \subset \BB $,
 $$ \frac{d^2}{dt^2} d^2(\gamma(t),z) \geq 2 S(r \delta^{1/2} ) |\dot \gamma(t)|^2.$$

\end{itemize}
\end{lem}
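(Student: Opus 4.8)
The result is a Hessian (second-variation) comparison estimate for the squared distance to a fixed point $z$, and I would prove it by the classical Jacobi-field / Rauch argument, checking along the way that the infinite dimension of $M$ causes no trouble. Write $d_z := d(\cdot,z)$. On the ball $B(x,r)$ with $r\le\rho(\mathcal{B})$, the bound $r\le\pi/(2\sqrt{|K(\mathcal{B})|})$ keeps the minimizing geodesics issued from $z$ free of conjugate points and $r\le c(\mathcal{B})$ makes them unique, so $d_z$ and $d_z^2$ are of class $C^2$ on the region swept by $\gamma$ (this is used only when $d_z$ along $\gamma$ is comparable to $\rho(\mathcal{B})$, i.e. $\gamma$ stays reasonably near $z$). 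Since $\gamma$ is a geodesic, $\frac{d^2}{dt^2}d_z^2(\gamma(t)) = \mathrm{Hess}(d_z^2)(\dot\gamma(t),\dot\gamma(t))$, so it suffices to bound $\mathrm{Hess}(d_z^2)$ from below pointwise.

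Next I would invoke the standard identities for a distance function: $|\nabla d_z|\equiv 1$, $\mathrm{Hess}(d_z)(\nabla d_z,\cdot)=0$, and $\mathrm{Hess}(d_z^2) = 2\,\nabla d_z\otimes\nabla d_z + 2d_z\,\mathrm{Hess}(d_z)$. Splitting $\dot\gamma = a\,\nabla d_z + Y$ with $a=\langle\dot\gamma,\nabla d_z\rangle$, $Y\perp\nabla d_z$ and $a^2+|Y|^2=|\dot\gamma|^2$, these identities give $\mathrm{Hess}(d_z^2)(\dot\gamma,\dot\gamma) = 2a^2 + 2d_z\,\mathrm{Hess}(d_z)(Y,Y)$. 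The quadratic form $\mathrm{Hess}(d_z)$ restricted to $(\nabla d_z)^\perp$ is the shape operator of the geodesic sphere through $\gamma(t)$, and the Rauch comparison theorem — whose proof compares a single Jacobi field along a fixed geodesic and only involves the curvature of the $2$-planes containing the radial direction, hence is valid verbatim on a Hilbert manifold with the relevant planes lying in $\mathcal{B}$ — gives $\mathrm{Hess}(d_z)(Y,Y)\ge\frac{1}{d_z}|Y|^2$ when $K(\mathcal{B})\le 0$, and $\mathrm{Hess}(d_z)(Y,Y)\ge\sqrt\delta\,\cot(\sqrt\delta\,d_z)\,|Y|^2$ when $0<K(\mathcal{B})\le\delta$, the cotangent being positive since $\sqrt\delta\,d_z\le\sqrt\delta\,r\le\pi/2$ by the definition of $\rho(\mathcal{B})$.

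Plugging these in closes the argument. When $K(\mathcal{B})\le 0$, $\mathrm{Hess}(d_z^2)(\dot\gamma,\dot\gamma)\ge 2a^2+2|Y|^2 = 2|\dot\gamma|^2$. When $0<K(\mathcal{B})\le\delta$, with $S(t)=t\cot t$, $\mathrm{Hess}(d_z^2)(\dot\gamma,\dot\gamma)\ge 2a^2 + 2S(\sqrt\delta\,d_z)\,|Y|^2$; since $S$ is non-increasing on $(0,\pi)$, $S(\sqrt\delta\,d_z)\ge S(\sqrt\delta\,r)$, and since $S\le 1$ on $(0,\pi)$ one has $2a^2+2S(\sqrt\delta\,r)|Y|^2 \ge 2S(\sqrt\delta\,r)(a^2+|Y|^2) = 2S(\sqrt\delta\,r)|\dot\gamma|^2$, which is the claim. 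The one point deserving care is the comparison step itself: one must be sure that the minimizing geodesics from $z$ to the points of $\gamma$ remain in a region governed by the curvature bound of $\mathcal{B}$ and stay short of the first conjugate point, which is exactly what the quantity $\rho(\mathcal{B})$ in Assumption \ref{ass:ic} is tailored to guarantee; granting this, the estimate is the infinite-dimensional Rauch comparison, as in \cite{Karcher}.
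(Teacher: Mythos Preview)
The paper does not give its own proof of this lemma: it is stated with a direct citation to \cite[Theorem 1.2]{Karcher} and then used as a black box in Lemma~\ref{lem:hypo2} and Corollary~\ref{cor:new}. Your sketch is precisely the classical Hessian comparison argument that underlies Karcher's result, and it is correct: the reduction to bounding $\mathrm{Hess}(d_z^2)$ via the geodesic equation, the splitting $\dot\gamma = a\,\nabla d_z + Y$, the Rauch comparison for $\mathrm{Hess}(d_z)$ on $(\nabla d_z)^\perp$, and the final monotonicity of $S(t)=t\cot t$ are all standard and sound, and you are right that the Jacobi-field comparison is one-dimensional along each radial geodesic and therefore carries over unchanged to the Hilbert-manifold setting.

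One small point worth tightening if you want a fully self-contained proof rather than a sketch: the lemma as stated in the paper is vague about where the fixed point $z$ sits, and its applications (Corollary~\ref{cor:new} and the proof of Lemma~\ref{lem:hypo2}) take $z$ to be the centre $x$ of the ball, so that $d(\gamma(t),z)<r\le\rho(\mathcal{B})$ is automatic. Your caveat at the end (``the minimizing geodesics from $z$ to the points of $\gamma$ remain in a region governed by the curvature bound of $\mathcal{B}$'') is the right observation, but you should make explicit that this is guaranteed because in practice $z=x$ and $B(x,r)$ is convex; otherwise the comparison step could in principle involve geodesics leaving $\mathcal{B}$.
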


\begin{cor} \label{cor:new} 
Let  $\mathcal{B}$ a bounded subset of $M$ containing  $x $, $ z_1$ and $z_2$. Suppose that
\begin{itemize}
\item $z_1,z_2 \in B(x, r) \subset \BB$ with $r \leq c(\mathcal{B})$, if $K(\mathcal{B}) \leq 0$; 
\item $z_1,z_2 \in B(x, r) \subset \BB $ with $r < \rho(\mathcal{B})$ if $0 <K(\mathcal{B}) \leq \delta$ .
\end{itemize}
then there exists $A:=A(\BB) >0 $ such that
$$ \langle \Gamma_{z_2,x} - L_{z_1 \to z_2} (\Gamma_{z_1,x}) , \Gamma_{z_2,z_1} \rangle \geq A d(z_1,z_2)^2.$$ 
\end{cor}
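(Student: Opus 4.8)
The plan is to recognize the left-hand side as half the increment of the derivative of the function $\phi(t):=d^2(\gamma(t),x)$, where $\gamma\colon[0,1]\to M$ is the geodesic with $\gamma(0)=z_2$ and $\gamma(1)=z_1$, and then to bound that increment from below using the uniform convexity of the squared distance provided by Lemma~\ref{lem:curvature}. Under either set of hypotheses the ball $B(x,r)$ is convex (since $r\le c(\BB)$), so $\gamma$ is the unique minimizing geodesic in $B(x,r)$, it stays in $B(x,r)\subset\BB$, it has constant speed $|\dot\gamma(t)|=|\Gamma_{z_2,z_1}|=d(z_1,z_2)$ by \eqref{normgam}, and $\dot\gamma(0)=\Gamma_{z_2,z_1}$; moreover $\phi$ is $C^2$ on $B(x,r)$, so that Lemma~\ref{lem:curvature} applies to $\gamma$.

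First I would compute the endpoint derivatives. By Lemma~\ref{lemb}, $\nabla_p d^2(p,x)=-2\exp_p^{-1}(x)=-2\Gamma_{p,x}$, hence $\phi'(t)=-2\langle\Gamma_{\gamma(t),x},\dot\gamma(t)\rangle$. At $t=0$ this reads $\phi'(0)=-2\langle\Gamma_{z_2,x},\Gamma_{z_2,z_1}\rangle$. At $t=1$ the velocity is the parallel transport $\dot\gamma(1)=L_{z_2\to z_1}(\dot\gamma(0))=L_{z_2\to z_1}(\Gamma_{z_2,z_1})=-\Gamma_{z_1,z_2}$ by \eqref{eq:symmetry}, so $\phi'(1)=2\langle\Gamma_{z_1,x},\Gamma_{z_1,z_2}\rangle$; transporting this inner product back to $T_{z_2}M$ through the isometry $L_{z_1\to z_2}$ and using \eqref{eq:symmetry} once more in the form $L_{z_1\to z_2}(\Gamma_{z_1,z_2})=-\Gamma_{z_2,z_1}$ yields $\phi'(1)=-2\langle L_{z_1\to z_2}(\Gamma_{z_1,x}),\Gamma_{z_2,z_1}\rangle$. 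Subtracting,
$$ \phi'(1)-\phi'(0)=2\big\langle\,\Gamma_{z_2,x}-L_{z_1\to z_2}(\Gamma_{z_1,x}),\ \Gamma_{z_2,z_1}\,\big\rangle. $$

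Then I would write $\phi'(1)-\phi'(0)=\int_0^1\phi''(t)\,dt$ and invoke Lemma~\ref{lem:curvature} along $\gamma$: in both cases it produces a constant $c_0=c_0(\BB)>0$ with $\phi''(t)\ge 2c_0\,|\dot\gamma(t)|^2$ for all $t$ — one may take $c_0=1$ when $K(\BB)\le 0$, and $c_0=S\big(r\sqrt{K(\BB)}\big)>0$ when $K(\BB)>0$, the positivity coming from $r<\rho(\BB)\le\pi/(2\sqrt{|K(\BB)|})$, which forces $r\sqrt{K(\BB)}<\pi/2$. Since $|\dot\gamma(t)|=d(z_1,z_2)$, this gives
$$ \big\langle\,\Gamma_{z_2,x}-L_{z_1\to z_2}(\Gamma_{z_1,x}),\ \Gamma_{z_2,z_1}\,\big\rangle=\frac12\int_0^1\phi''(t)\,dt\ \ge\ c_0\,d(z_1,z_2)^2, $$
which is the claim with $A:=c_0$. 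The only delicate point is the bookkeeping in the second paragraph: one must track precisely which tangent space each vector $\Gamma_{\cdot,\cdot}$ lives in and apply the symmetry relation \eqref{eq:symmetry} with the right orientation so as to avoid sign errors and to bring all vectors into $T_{z_2}M$ before integrating. All the geometric substance is packaged in Lemma~\ref{lem:curvature}, and the curvature hypotheses of the corollary are exactly those under which that lemma is available.
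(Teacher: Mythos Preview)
Your proof is correct and follows essentially the same approach as the paper: both arguments reduce the inequality to the lower bound on $\frac{d^2}{dt^2}d^2(\gamma(t),x)$ supplied by Lemma~\ref{lem:curvature}, where $\gamma$ is the geodesic from $z_2$ to $z_1$. The only organizational difference is that you integrate $\phi''$ directly to obtain $\phi'(1)-\phi'(0)$, whereas the paper writes two second-order Taylor expansions of $\varphi(z)=d^2(z,x)$ (one based at $z_2$, one at $z_1$), sums them so that the zeroth-order terms cancel, and then applies the parallel transport; the resulting inequality and constants are the same.
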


\begin{proof} We set $\varphi(z):= d^2(z,x)$. Then with a second order expansion of the function $d^2$ and \eqref{eq:gamma}, 
 we have
$$ \varphi(z_1)-\varphi(z_2) = \langle \nabla \varphi(z_2), \exp^{-1}_{z_2}(z_1) \rangle + \int_0^1 \frac{d^2}{dt^2} \varphi(\gamma(t)) dt$$
where $\gamma(t)$ is the geodesic between $\gamma(0)=z_2$ and $\gamma(1)=z_1$.
With $A=2$ (in the case of negative curvature) or $A=  2S(r \delta^{1/2} )$ (if the curvature is at most $\delta$), Lemma \ref{lem:curvature} implies
$$ \varphi(z_1)-\varphi(z_2) \geq \langle \nabla \varphi(z_2), \exp^{-1}_{z_2}(z_1) \rangle +  A d(z_1,z_2)^2,$$
which can be written as
$$ \varphi(z_1)-\varphi(z_2) \geq -2 \langle \Gamma_{z_2,x}, \Gamma_{z_2,z_1} \rangle + A d(z_1,z_2)^2.$$
By symmetry, we have
$$ \varphi(z_2)-\varphi(z_1) \geq -2 \langle \Gamma_{z_1,x}, \Gamma_{z_1,z_2} \rangle + A d(z_1,z_2)^2.$$
By summing these two inequalities, it comes 
$$  \langle \Gamma_{z_2,x}, \Gamma_{z_2,z_1} \rangle_{T_{z_2}M} +  \langle \Gamma_{z_1,x}, \Gamma_{z_1,z_2} \rangle_{T_{z_1} M} \geq  A d(z_1,z_2)^2.$$
Using parallel transport along the geodesic from $z_1$ to $z_2$, 
$$ \langle \Gamma_{z_1,x}, \Gamma_{z_1,z_2} \rangle_{T_{z_1} M} = -\langle L_{z_1 \to z_2} \Gamma_{z_1,x}, \Gamma_{z_2,z_1} \rangle_{T_{z_2} M},$$
which is  the expected result.
\end{proof}

\begin{lem} \label{lem:hypo2} Let $C$ be a closed subset of $M$. Assume that for every bounded subset $\BB$ of $M$, there exists a constant $\kappa_\BB >0$ such that for all $x,y\in C \cap \BB$ satisfying $d(x,y) < \rho(\BB)$ and every $v\in \NN(C,x)$ 
\be{eq:hypo}  \langle v, \Gamma_{x,y} \rangle_{T_xM} \leq \kappa_\BB |v| d(x,y)^2.\ee

Then there exists $\theta^\BB \in ]0, \rho(\BB)[ $ such that for all  $t <\theta^\BB $ and for all $v \in \NN(C,x) \setminus \{ 0 \}$, 
$$ x \in \PPP_C\left(\gamma_{x, \frac{v}{|v|}} (t)\right). $$
In other words, $C$ is locally prox-regular.
\end{lem}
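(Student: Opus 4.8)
The plan is to show that for $x \in C \cap \BB$ and a unit vector $v \in \NN(C,x)$, the point $x$ is a \emph{global} minimizer of $y \mapsto d(y, \gamma_{x,v}(t))$ over $C$ as long as $t$ is small enough (uniformly in $x$ and $v$), by combining the hypomonotonicity hypothesis \eqref{eq:hypo} with the quantitative convexity of the squared distance provided by Lemma \ref{lem:curvature}. First I would fix a bounded set $\BB$, enlarge it slightly to $\tilde\BB := \{m \in M,\ d(m,\BB) \leq \rho(\BB)\}$, and work with the constants $\kappa_{\tilde\BB}$, $\rho(\tilde\BB)$, and the curvature bound on $\tilde\BB$; I would also recall that the distance $d_C(\gamma_{x,v}(t)) \leq t$ since $x \in C$, so any candidate competitor $y \in C$ realizing (or nearly realizing) the projection satisfies $d(x,y) \leq d(x,\gamma_{x,v}(t)) + d(\gamma_{x,v}(t),y) \leq 2t$, hence stays in a ball around $x$ whose radius we can force below $\rho(\tilde\BB)$ by taking $t$ small.

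The core estimate: set $m := \gamma_{x,v}(t)$ and, for $y \in C$ with $d(x,y) \leq 2t < \rho(\tilde\BB)$, apply a second-order Taylor expansion of $\varphi(\cdot) := d^2(\cdot, m)$ along the geodesic $\gamma_{x,\Gamma_{x,y}}$ from $x$ to $y$, exactly as in the proof of Lemma \ref{lem:hypo} / Corollary \ref{cor:new}. Using $\nabla_x d^2(x,m) = -2\exp_x^{-1}(m) = -2 t v$ (Lemma \ref{lemb}) for the first-order term and the curvature lower bound of Lemma \ref{lem:curvature} for the Hessian term, I get
$$ d(y,m)^2 - d(x,m)^2 \geq -2t\langle v, \Gamma_{x,y}\rangle_{T_xM} + A\, d(x,y)^2 $$
for a constant $A = A(\tilde\BB) > 0$ (here $A = 2$ if the curvature is nonpositive, $A = 2 S(r\delta^{1/2})$ otherwise, with $r$ chosen once and for all so that $2t < r < \rho(\tilde\BB)$). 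Now plug in the hypomonotonicity bound $\langle v, \Gamma_{x,y}\rangle_{T_xM} \leq \kappa_{\tilde\BB}\, d(x,y)^2$ (legitimate since $x,y \in C \cap \tilde\BB$ and $d(x,y) < \rho(\tilde\BB)$, using $|v|=1$), which yields
$$ d(y,m)^2 - d(x,m)^2 \geq \big(A - 2t\,\kappa_{\tilde\BB}\big)\, d(x,y)^2. $$
Choosing $\theta^\BB \in\, ]0,\rho(\BB)[$ small enough that $2\theta^\BB \kappa_{\tilde\BB} < A$ and $2\theta^\BB < r$, the right-hand side is nonnegative for all $t < \theta^\BB$, so $d(y,m) \geq d(x,m)$ for every $y \in C$ near $x$; combined with the a priori localization $d(x,y) \leq 2t$ for genuine competitors, this shows $x \in \PPP_C(\gamma_{x,\frac{v}{|v|}}(t))$, which is precisely local prox-regularity.

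The main obstacle, and the point requiring care, is the \emph{localization argument} that reduces the global projection problem to competitors $y$ in a small ball: one must argue that if $y \in C$ is close to minimizing $d(\cdot, m)$ then necessarily $d(y,m) \leq d(x,m) + \text{(small)} \leq t + \text{(small)}$, hence $d(x,y) \leq 2t + \text{(small)}$, so that the second-order expansion (which is only valid while the geodesic $\gamma_{x,\Gamma_{x,y}}$ stays inside the convexity/injectivity radius) applies — and then to close the loop, noting that a point $y \in C$ far from $x$ automatically has $d(y,m) > d(x,m)$ and so cannot beat $x$. A clean way is to first observe $\PPP_C(m)$ is contained in $\overline{B}(m, d_C(m)) \subset \overline{B}(m,t)$, then note every point of this ball is within $2t$ of $x$; since we only need $x$ to belong to $\PPP_C(m)$, it suffices to compare $x$ with points of $C \cap \overline{B}(x,2t)$, all of which are handled by the estimate above. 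One should also double-check that $\theta^\BB$ can be taken to depend only on $\BB$ (through $\tilde\BB$, $\kappa_{\tilde\BB}$, $A(\tilde\BB)$, $\rho(\BB)$), not on $x$ or $v$ — which it does, since every constant entering the bound is a geometric constant of $\tilde\BB$ or the hypothesized $\kappa_{\tilde\BB}$.
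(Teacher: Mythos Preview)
Your proposal is correct and follows essentially the same route as the paper: enlarge $\BB$ to $\tilde\BB$, Taylor-expand $d^2(\cdot,\gamma_{x,v}(t))$ along the geodesic from $x$ to a competitor, combine the first-order term (which equals $-2t\langle v,\Gamma_{x,y}\rangle$ via Lemma~\ref{lemb}) with the Hessian lower bound of Lemma~\ref{lem:curvature}, then invoke the hypomonotonicity hypothesis to force nonnegativity for $t$ small; the localization of competitors to $B(x,2t)$ via the triangle inequality is also exactly what the paper does (the paper phrases it as a contradiction argument, you phrase it directly, but the content is identical). The only cosmetic slip is that the geodesic from $x$ to $y$ lives in $B(m,3t)$ rather than $B(m,2t)$, so the radius $r$ in $A=2S(r\delta^{1/2})$ should satisfy $3t<r\leq\rho(\tilde\BB)$ (the paper takes $r=3t$ and $t\leq\rho(\tilde\BB)/3$); this does not affect the argument.
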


\begin{proof} Let  $\BB$ be a bounded subset of $M$, let us fix  $x\in C\cap \BB$ and $v\in \NN(C,x)$. Without loss of generality assume that $|v|=1$. 
We define $\tilde{\BB} :=\{  m\in M, d(m,\BB) <  \rho(\BB)\}$ and we recall that $ \rho(\tilde{\BB})\leq\rho(\BB)$. 
Moreover we set $\theta^\BB := \min\left(\frac{1}{2\kappa_{\tilde{\BB} }} ,\frac{\rho(\tilde{\BB} )}{3} \right)$ if $ K(\tilde{\BB} ) \leq 0$ and $ \theta^\BB := \min\left(\frac{\rho(\tilde{\BB} )}{3}, \frac{1}{3\delta^{1/2}} \arctan(\frac{3 \delta^{1/2}}{ 2\kappa_{\tilde{\BB} }})\right) $ if $0 <K(\tilde{\BB} ) \leq \delta$.  
We are going to check that for $t < \theta^\BB< \rho(\tilde{\BB})/3$
\be{eq:distamontrer} d_C(\gamma_{x,v}(t)) = t. \ee
We assume that (\ref{eq:distamontrer}) does not hold and then there exists $z \in C$ ($z \neq x$) satisfying 
$$ d(z,\gamma_{x,v}(t))^2<t^2$$
and in using a second order expansion, we get
\begin{align*}
 t^2 > d(z,\gamma_{x,v}(t))^2=t^2+\langle \nabla_x
d^2(x,\gamma_{x,v}(t)), \Gamma_{x,z} \rangle \\ + \int_0^1 \int_0^\sigma
\left[ \frac{d^2}{ds^2}
  d^2(\gamma_{x,\Gamma_{x,z}}(s),\gamma_{x,v}(t))\right ]_{|s=\tau}
d\tau d\sigma.
\end{align*}
Note that $\Gamma_{x,z}$ is well-defined since $d(x,z) <  d(x, \gamma_{x,v}(t) ) + d(z,
\gamma_{x,v}(t) )  <2t < \rho(\tilde{\BB})$. 
As a consequence,  $z\in \tilde{\BB} $ and $$\begin{array}{l}
 \dsp \int_0^1  \int_0^\sigma
\left[\frac{d^2}{ds^2} d^2(\gamma_{x,\Gamma_{x,z}}(s),\gamma_{x,v}(t))
\right ]_{|s=\tau} d\tau d\sigma  \vspace{6pt} \\  
\hspace{1cm}  \leq \dsp \langle - \nabla_x
d^2(x,\gamma_{x,v}(t)), \Gamma_{x,z} \rangle =  \langle 2
\exp_x^{-1}(\gamma_{x,v}(t)), \Gamma_{x,z} \rangle= \langle 2 t v,
\Gamma_{x,z} \rangle, 
\end{array}$$
thanks to Lemma \ref{lemb}. Thus with assumption (\ref{eq:hypo}), it comes
$$ \int_0^1  \int_0^\sigma
\left[\frac{d^2}{ds^2} d^2(\gamma_{x,\Gamma_{x,z}}(s),\gamma_{x,v}(t))
\right ]_{|s=\tau} d\tau d\sigma \leq 2 t \kappa_{\tilde{\BB}} d(x,z)^2.$$
Moreover 
\begin{align*}
d( \gamma_{x,\Gamma_{x,z}}(s),\gamma_{x,v}(t) ) &  \leq d(x,
\gamma_{x,v}(t) ) + d(x, \gamma_{x,\Gamma_{x,z}}(s)) \leq t |v |+ s
|\Gamma_{x,z} | \\
& \leq t + d(x,z) \\ &\leq 3t \leq \rho(\tilde{\BB}) \leq c(\tilde{\BB}) .
\end{align*}
If $ K(\tilde{\BB})\leq 0$ everywhere, Lemma
\ref{lem:curvature} yields for $t \leq  \frac{\rho(\tilde{\BB})}{3} \leq \frac{c(\tilde{\BB})}{3}$
$$ \left[ \frac{d^2}{ds^2} d^2(\gamma_{x,\Gamma_{x,z}}(s),\gamma_{x,v}(t)) \right ]_{|s=\tau} \geq 2|\dot \gamma_{x,\Gamma_{x,z}}(\tau) |^2 = 2|\Gamma_{x,z}|^2 = 2d(x,z)^2.$$
Consequently, we have for $t \leq \frac{\rho(\tilde{\BB})}{3} \leq \frac{c(\tilde{\BB})}{3}$
$$ d(x,z)^2  \leq  2t \kappa_{\tilde{\BB}} d(x,z)^2$$
which is impossible for $t<\frac{1}{2\kappa_{\tilde{\BB}}}$.
Thus for $t <\theta^\BB$, $d_C(\gamma_{x,v}(t))=t  $ so $x\in \PPP_C(\gamma_{x,v}(t)) $. 
If $0 <K(\tilde{\BB}) \leq \delta$, Lemma
\ref{lem:curvature} yields for $t \in \left]0,  \frac{\rho(\tilde{\BB})}{3} \right]$
$$ \left[ \frac{d^2}{ds^2}
  d^2(\gamma_{x,\Gamma_{x,z}}(s),\gamma_{x,v}(t)) \right ]_{|s=\tau}
\geq  2 S(3t \delta^{1/2})d(x,z)^2 ,$$
for every $\tau\in [0,s]$ and every $s \in [0,1]$. 
Hence, we get 
$$ d(x,z)^2  \leq \frac{ 2\kappa_{\tilde{\BB}} t }{ S(3t \delta^{1/2})}d(x,z)^2
=\frac{2 \kappa_{\tilde{\BB}} \tan(3t \delta^{1/2})}{3 \delta^{1/2}} d(x,z)^2$$
which is impossible as soon as $t < \theta^\BB$.
\end{proof}

Combining Lemmas \ref{lem:hypo} and \ref{lem:hypo2}, we obtain the following characterization of locally prox-regular subsets:

\begin{thm}[Hypomonotonicity property] \label{thm:hypo} Let $C$ be a closed subset of $M$. Then $C$ is locally prox-regular if and only if for all bounded subset $\BB$ of
  $M$, there exists a constant $\A_\BB >0$ such that for all $x,y\in C
  \cap \BB$ with $d(x,y) \leq \rho(\BB)$ and $v\in \NN(C,x)$
$$  \langle v,  \Gamma_{x,y} \rangle_{T_xM} \leq \A_\BB |v|  d(x,y)^2.$$
\end{thm}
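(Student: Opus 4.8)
The plan is to observe that Theorem \ref{thm:hypo} is simply the combination of the two lemmas that precede it, so the ``proof'' is really just a short assembly argument. I would state up front that the forward implication (local prox-regularity $\Rightarrow$ hypomonotonicity) is exactly the content of Lemma \ref{lem:hypo}, and the reverse implication (hypomonotonicity $\Rightarrow$ local prox-regularity) is exactly the content of Lemma \ref{lem:hypo2}. The only genuine content to check is that the two hypotheses/conclusions are literally the same statement, modulo cosmetic differences in the names of the constants ($\A_\BB$ versus $\kappa_\BB$) and in the strictness of the inequality $d(x,y)\leq\rho(\BB)$ versus $d(x,y)<\rho(\BB)$.

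First I would handle the ``only if'' direction: assume $C$ is locally prox-regular; then Lemma \ref{lem:hypo} directly provides, for each bounded $\BB$, a constant $\A_\BB>0$ (there written $\A_\K$) such that $\langle v,\Gamma_{x,y}\rangle_{T_xM}\leq \A_\BB|v|\,d(x,y)^2$ for all $x,y\in C\cap\BB$ with $d(x,y)\leq\rho(\BB)$ and all $v\in\NN(C,x)$. This is verbatim the asserted inequality, so nothing further is needed.

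Next the ``if'' direction: assume the hypomonotonicity inequality holds on every bounded set with constant $\A_\BB$. Then in particular it holds with $d(x,y)<\rho(\BB)$ (a weaker requirement than $d(x,y)\leq\rho(\BB)$), which is precisely hypothesis \eqref{eq:hypo} of Lemma \ref{lem:hypo2} with $\kappa_\BB:=\A_\BB$. That lemma then yields, for each bounded $\BB$, a radius $\theta^\BB\in\,]0,\rho(\BB)[$ such that $x\in\PPP_C(\gamma_{x,v/|v|}(t))$ for all $t<\theta^\BB$ and all $v\in\NN(C,x)\setminus\{0\}$; setting $\eta_\BB:=\theta^\BB$ in Definition \ref{def:prox} shows $C$ is locally prox-regular.

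I do not expect any real obstacle here, since all the analytic work (second-order expansions of the squared distance, curvature comparison via Lemma \ref{lem:curvature}, the bounds from Proposition \ref{propH} and Lemma \ref{lem:geodesic}) has already been absorbed into Lemmas \ref{lem:hypo} and \ref{lem:hypo2}. The only point deserving a word of care is the mild mismatch between ``$\leq\rho(\BB)$'' in the theorem statement and ``$<\rho(\BB)$'' in Lemma \ref{lem:hypo2}: in the ``if'' direction one restricts to the open condition, which is harmless; in the ``only if'' direction Lemma \ref{lem:hypo} already gives the closed condition, so there is nothing to reconcile. Hence the proof is a two-line citation of the two lemmas.

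\begin{proof}
This is just the conjunction of Lemmas \ref{lem:hypo} and \ref{lem:hypo2}.

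If $C$ is locally prox-regular, then Lemma \ref{lem:hypo} asserts precisely that for every bounded subset $\BB$ of $M$ there is a constant $\A_\BB>0$ such that
$\langle v,\Gamma_{x,y}\rangle_{T_xM}\leq \A_\BB|v|\,d(x,y)^2$
for all $x,y\in C\cap\BB$ with $d(x,y)\leq\rho(\BB)$ and all $v\in\NN(C,x)$, which is the stated hypomonotonicity property.

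Conversely, suppose that for every bounded $\BB$ there is $\A_\BB>0$ with
$\langle v,\Gamma_{x,y}\rangle_{T_xM}\leq \A_\BB|v|\,d(x,y)^2$
for all $x,y\in C\cap\BB$ with $d(x,y)\leq\rho(\BB)$ and all $v\in\NN(C,x)$. In particular this inequality holds whenever $d(x,y)<\rho(\BB)$, so assumption \eqref{eq:hypo} of Lemma \ref{lem:hypo2} is satisfied with $\kappa_\BB:=\A_\BB$. That lemma then provides, for each bounded $\BB$, a radius $\theta^\BB\in\,]0,\rho(\BB)[$ such that $x\in\PPP_C\!\left(\gamma_{x,v/|v|}(t)\right)$ for every $t<\theta^\BB$ and every $v\in\NN(C,x)\setminus\{0\}$. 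Taking $\eta_\BB:=\theta^\BB$ in Definition \ref{def:prox} shows that $C$ is locally prox-regular.
\end{proof}
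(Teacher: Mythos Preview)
Your proposal is correct and matches the paper's own treatment exactly: the paper simply introduces the theorem with the sentence ``Combining Lemmas \ref{lem:hypo} and \ref{lem:hypo2}, we obtain the following characterization,'' and gives no further proof. Your explicit reconciliation of the $\leq\rho(\BB)$ versus $<\rho(\BB)$ conditions is a welcome bit of care that the paper leaves implicit.
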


As mentioned in the introduction, the notions of proximal normal cone and of prox-regularity (at a point) have been already appeared in \cite{HP}. Let us recall them. 

\begin{df}[\cite{HP}]
For $C$ a closed subset of $M$ and $x\in C$ the proximal normal cone to $C$ at $x$ is defined by 
$$ N^P_C(x):= N^P_{\exp^{-1}_x (U\cap C)} (0_x)$$
with $U := \exp_x(V)$, where $V$ is an open neighborhood of $0_x$. \\
Moreover the limiting normal cone to $C$ at $x$ is expressed as follows: 
$$N^L_C(x):=\left\{ \lim_{i \to \infty} v_i : v_i \in N^P_C(x_i) \textmd{ and } x_i \to x \textmd{ with } x_i \in C\right\}. $$
\end{df}

\begin{df}\cite[Definition 3.3]{HP}
 A closed subset $C\subset M$ is said to be prox-regular at $\bar{x} \in C$ if there exist $\epsilon>0$ and $\alpha>0$ such that $B(\bar{x}, \epsilon)$ is convex  and  for every $x\in C\cap B(\bar{x}, \epsilon)$, $y\in C \cap B(\bar{x},\epsilon)$ and $v\in N^L_C(x)$ with $|v|\leq \epsilon$ we have
$$ \langle v , \exp^{-1}_x(y) \rangle \leq \frac{\alpha}{2} d(x,y)^2.$$
\label{def:proxHP}
\end{df}

 First we would like to show that the two notions of proximal normal cones are equivalent. This result is based on the following proposition.

\begin{prop} \label{prop:hypomo} Let $C$ be a closed subset of $M$ and $\BB$ a bounded subset of $M$. Consider $x\in C \cap \BB$ and $v\in T_xM$. Then $v\in \NN(C,x)$ if and only if there exist $ \theta \in (0, \rho(\BB))$ and  $\Lambda  >0$ such that for all $y\in C \cap \BB$ with $d(x,y)\leq  \theta$ 
\be{eq:hypobis}  \langle v, \Gamma_{x,y} \rangle_{T_xM} \leq \Lambda  d(x,y)^2.\ee
\end{prop}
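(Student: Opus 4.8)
The plan is to prove the two implications of Proposition \ref{prop:hypomo} separately, exploiting the already-established machinery.

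\textbf{The easy direction ($\Rightarrow$).} Suppose $v\in\NN(C,x)$. If $v=0$ the inequality \eqref{eq:hypobis} is trivial (take any $\theta\in(0,\rho(\BB))$ and any $\Lambda>0$), so assume $v\neq 0$; by homogeneity of the cone we may rescale and assume $|v|=1$. This is exactly the content of Remark \ref{rem:hypo}: there exist $\alpha>0$ and $\Lambda>0$ such that $\langle v,\Gamma_{x,y}\rangle\leq\Lambda d(x,y)^2$ for all $y\in C\cap B(x,\alpha)$. I would simply reproduce that argument here, being careful that the resulting neighborhood radius $\theta$ can be taken in $(0,\rho(\BB))$ (shrinking $\alpha$ if necessary so that $\theta\leq\min(\alpha,\rho(\BB))$ and noting that $C\cap B(x,\theta)\subset C\cap\BB$ whenever $B(x,\theta)\subset\BB$; if $B(x,\theta)\not\subset\BB$ one enlarges $\BB$ to a bigger bounded set, which only weakens the constants). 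The key inputs are: the definition of $\NN(C,x)$ gives $\varepsilon>0$ with $x\in\PPP_C(\gamma_{x,v}(\varepsilon))$, hence $t^2=d(x,\gamma_{x,v}(t))^2\leq d(y,\gamma_{x,v}(t))^2$ for all $y\in C$ and all small $t$; then a second-order Taylor expansion of $s\mapsto d^2(\gamma_{x,\Gamma_{x,y}}(s),\gamma_{x,v}(t))$ together with Lemma \ref{lemb} (to identify the first-order term with $\langle 2tv,\Gamma_{x,y}\rangle$), Proposition \ref{propH} (to bound the Hessian term by $C_{\rho(\BB)}d(x,y)^2$), and Lemma \ref{lem:geodesic} (to bound the $\ddot\gamma$ term) yield $\langle v,\Gamma_{x,y}\rangle\leq \Lambda d(x,y)^2$ after dividing by $2t$ and choosing $t$ fixed, e.g. $t=\theta$.

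\textbf{The harder direction ($\Leftarrow$).} Suppose \eqref{eq:hypobis} holds for some $\theta\in(0,\rho(\BB))$ and $\Lambda>0$; we must produce $\varepsilon>0$ with $x\in\PPP_C(\gamma_{x,v/|v|}(\varepsilon))$. Again assume $|v|=1$. The strategy mirrors the proof of Lemma \ref{lem:hypo2}, but localized to the single point $x$ and the single vector $v$ rather than uniformly over $C\cap\BB$: I would enlarge $\BB$ to $\tilde\BB:=\{m:d(m,\BB)<\rho(\BB)\}$, fix a small time $\varepsilon>0$ to be chosen (of the same form as $\theta^\BB$ there, namely $\varepsilon<\rho(\tilde\BB)/3$ and $\varepsilon<1/(2\Lambda)$ in the nonpositive-curvature case, or $\varepsilon<\tfrac{1}{3\delta^{1/2}}\arctan(\tfrac{3\delta^{1/2}}{2\Lambda})$ in the case $0<K(\tilde\BB)\leq\delta$, and also $\varepsilon<\theta$), and argue by contradiction: if $d_C(\gamma_{x,v}(\varepsilon))<\varepsilon$ there is $z\in C$, $z\neq x$, with $d(z,\gamma_{x,v}(\varepsilon))^2<\varepsilon^2$. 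Then $d(x,z)<2\varepsilon<\theta$, so $z$ is an admissible test point for \eqref{eq:hypobis}, and a second-order expansion of $d^2(\cdot,\gamma_{x,v}(\varepsilon))$ along the geodesic $\gamma_{x,\Gamma_{x,z}}$, combined with Lemma \ref{lemb} (first-order term $=\langle 2\varepsilon v,\Gamma_{x,z}\rangle\leq 2\varepsilon\Lambda d(x,z)^2$ by \eqref{eq:hypobis}) and the convexity-type lower bound on the Hessian of $d^2$ from Lemma \ref{lem:curvature} (giving the second-order term $\geq 2S(3\varepsilon\delta^{1/2})d(x,z)^2$, resp. $\geq 2d(x,z)^2$), forces $d(x,z)^2\leq \tfrac{2\Lambda\varepsilon}{S(3\varepsilon\delta^{1/2})}d(x,z)^2 < d(x,z)^2$, a contradiction. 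Hence $d_C(\gamma_{x,v}(\varepsilon))=\varepsilon$, i.e. $x\in\PPP_C(\gamma_{x,v}(\varepsilon))$, so $v\in\NN(C,x)$.

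\textbf{Main obstacle.} The only genuinely delicate points are bookkeeping ones: making sure all the auxiliary points ($z$, the geodesic $\gamma_{x,\Gamma_{x,z}}(s)$, $\gamma_{x,v}(\varepsilon)$) lie in the enlarged bounded set $\tilde\BB$ and within the relevant injectivity/convexity radii so that Lemma \ref{lem:curvature}, Proposition \ref{propH} and Lemma \ref{lem:geodesic} apply, and making sure the final $\theta$ (resp. $\varepsilon$) can be taken strictly inside $(0,\rho(\BB))$. These are handled exactly as in the proofs of Lemma \ref{lem:hypo} and Lemma \ref{lem:hypo2} (estimating $d(\gamma_{x,\Gamma_{x,z}}(s),\gamma_{x,v}(\varepsilon))\leq 3\varepsilon\leq\rho(\tilde\BB)\leq c(\tilde\BB)$, etc.), so no new idea is required — the proposition is essentially the non-uniform, pointwise version of the equivalence packaged in Theorem \ref{thm:hypo}, and its proof is obtained by restricting those two arguments to a fixed pair $(x,v)$.
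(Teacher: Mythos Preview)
Your proposal is correct and follows essentially the same approach as the paper: the forward implication is exactly Remark \ref{rem:hypo}, and the reverse implication is obtained by localizing the contradiction argument of Lemma \ref{lem:hypo2} to the single pair $(x,v)$, with the extra care (which you note) that the contradiction time must also satisfy $\varepsilon<\theta$ so that the competitor $z$ lies in the domain where \eqref{eq:hypobis} applies. Your bookkeeping discussion is more explicit than the paper's, but there is no substantive difference.
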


\begin{proof} Let $v \neq 0$ satisfying (\ref{eq:hypobis}). For all $ y  \in C \cap \BB$ with $d(x,y)\leq  \theta$,  we have $$ \langle v, \Gamma_{x,y} \rangle_{T_xM} \leq \kappa |v|  d(x,y)^2 ,$$ where $\kappa:= \frac{\Lambda}{|v|}$. Then it suffices us to check that for some small enough $t>0$, 
$$ x\in \PPP_{C}(\gamma_{x,v}(t)),$$
which is equivalent to 
$$ d_C(\gamma_{x,v}(t)) = t|v|.$$
Let assume that it does not hold: $d_C(\gamma_{x,v}(t)) < t|v|$ and choose $z$ satisfying $ d(z,\gamma_{x,v}(t) ) < t|v|$. Following the proof of Lemma \ref{lem:hypo2}, it comes a contradiction for small enough $t$  ($t<\theta^\BB$). In conclusion, $v \in \NN(C,x)$. \\
Furthermore, if $v\in \NN(C,x)$ then Remark \ref{rem:hypo} yields that (\ref{eq:hypobis}) is verified.
\end{proof}

\begin{cor}
For $C$ a closed subset of $M$ and $x\in C$, then $$N^P_C(x) = \NN(C,x).$$
\label{cor:eqcones}
\end{cor}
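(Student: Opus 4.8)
The plan is to read off this identity from Proposition~\ref{prop:hypomo} together with the classical description of the proximal normal cone in a Hilbert space. First I would recall that, in a separable Hilbert space $H$, for a closed set $S$ and $\bar s\in S$ one has $v\in N^P_S(\bar s)$ if and only if there exist $\sigma\ge 0$ and $\delta>0$ with
$$\langle v,\, s-\bar s\rangle \;\le\; \sigma\,\|s-\bar s\|^2 \qquad \text{for all } s\in S \text{ with } \|s-\bar s\|\le \delta.$$
(The forward implication is the proximal normal inequality obtained by expanding $\|\bar s+tv-s\|^2\ge \|tv\|^2$; the converse holds because, for $t$ small enough, $\bar s$ then realizes the distance from $\bar s+tv$ to $S$, the far-away points of $S$ being automatically farther from $\bar s+tv$.) In particular $N^P_S(\bar s)$ depends only on $S$ near $\bar s$, so in the definition of the proximal normal cone $N^P$ from \cite{HP}, where $N^P_C(x)=N^P_{\exp^{-1}_x(U\cap C)}(0_x)$, the neighborhood $U=\exp_x(V)$ is immaterial; hence I may fix a bounded neighborhood $\BB$ of $x$ and a radius $r\in(0,\rho(\BB)]$ with $B(x,r)\subset\BB$, so that (since $r\le\rho(\BB)\le i(\BB)\le i_M(x)$, by Assumption~\ref{ass:ic}) $\exp_x$ is a diffeomorphism from $B(0_x,r)$ onto $B(x,r)$, and take $U=B(x,r)$.

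Next I would use that this chart identifies $C\cap B(x,r)$ with the set $S:=\exp^{-1}_x(C\cap B(x,r))\subset T_xM$, sending $x$ to $0_x$, and — this is the one feature of the chart that is exact — that it preserves the distance to the base point: $|\exp^{-1}_x(y)|=|\Gamma_{x,y}|=d(x,y)$ by \eqref{normgam}, with $\exp^{-1}_x(y)=\Gamma_{x,y}$. Writing $w=\exp^{-1}_x(y)$, the inequality $\langle v,w-0_x\rangle\le\sigma|w-0_x|^2$ becomes exactly $\langle v,\Gamma_{x,y}\rangle_{T_xM}\le\sigma\,d(x,y)^2$, and ``$|w-0_x|\le\delta$'' becomes ``$d(x,y)\le\delta$''. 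So, after shrinking $\delta$ if necessary to ensure $\{y\in C\cap\BB:\ d(x,y)\le\delta\}\subset C\cap B(x,r)$, I get that $v\in N^P_C(x)$ if and only if there exist $\sigma\ge 0$ and $\delta\in(0,r)$ such that $\langle v,\Gamma_{x,y}\rangle_{T_xM}\le\sigma\,d(x,y)^2$ for every $y\in C\cap\BB$ with $d(x,y)\le\delta$. This last statement is precisely condition~\eqref{eq:hypobis} of Proposition~\ref{prop:hypomo} (with $\theta=\delta$, $\Lambda=\sigma$), so that proposition yields its equivalence with $v\in\NN(C,x)$, and the corollary follows.

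The step requiring the most care is the translation just described: it is tempting to argue directly that ``$x\in\PPP_C(\gamma_{x,v}(\varepsilon))$'' is equivalent to ``$0_x$ is a nearest point of $S$ to $\varepsilon v$'', but this is false in general, since $\exp_x$ distorts distances away from the geodesics through $x$: one has $d(\exp_x(\varepsilon v),y)\neq|\varepsilon v-\exp^{-1}_x(y)|$ in general, only the distance to $x$ itself being preserved. Since the distance to $x$ is exactly the quantity entering the proximal normal inequality, routing the equivalence through the ``hypomonotone'' quadratic inequality of Proposition~\ref{prop:hypomo}, rather than through nearest-point realizations, is what makes the identification go through cleanly.
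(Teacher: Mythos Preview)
Your proof is correct and follows essentially the same route as the paper's: both reduce to Proposition~\ref{prop:hypomo} by showing that $v\in N^P_C(x)$ is equivalent to the quadratic inequality \eqref{eq:hypobis}. The paper obtains this characterization by citing \cite[Lemma~2.1]{HP}, whereas you derive it directly from the Hilbert-space proximal normal inequality pushed through the exponential chart at $x$---which is precisely the content of that lemma.
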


\begin{proof}
Indeed, \cite[Lemma 2.1]{HP} states that $v\in N^P_C(x)$ if and only if there is $\alpha>0$ such that 
$$ \langle v , \exp^{-1}_x(y) \rangle \leq \frac{\alpha}{2} d(x,y)^2,  $$ 
for every $y$ in a neighborhood of $x$. This property is equivalent to (\ref{eq:hypobis}).  We conclude to the equality of these cones thanks to Proposition \ref{prop:hypomo}. 
\end{proof}

Now let us specify the link between the different definitions of prox-regularity.

\begin{prop} \label{pro}  
Let $C$ be a closed subset of $M$. If $C$ is locally prox-regular (according to Definition \ref{def:prox} ) then $C$ is prox-regular at any point $\bar{x} \in C$ (in the sense of Definition \ref{def:proxHP} ). 
\end{prop}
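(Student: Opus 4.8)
The plan is to fix $\bar{x} \in C$ and reduce Definition~\ref{def:proxHP} (from \cite{HP}) to the hypomonotonicity inequality provided by Lemma~\ref{lem:hypo}. The key observation is that the limiting normal cone $N^L_C$ is built from the proximal normal cones $N^P_C$, which by Corollary~\ref{cor:eqcones} coincide with our $\NN(C,\cdot)$, so everything can be phrased in terms of $\NN(C,\cdot)$ and the estimate $\langle v,\Gamma_{x,y}\rangle \leq \A_\K |v|\, d(x,y)^2$.

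First I would choose a bounded neighborhood of $\bar{x}$: set $\BB := \overline{B(\bar{x},\rho_0)}$ for some $\rho_0>0$ small enough that $\rho(\BB)>0$ and $B(\bar{x},\rho_0)$ is convex (possible since $c_M(\bar x)>0$ under Assumption~\ref{ass:ic}). Lemma~\ref{lem:hypo} then furnishes a constant $\A_\BB>0$ such that for all $x,y \in C\cap\BB$ with $d(x,y)\leq \rho(\BB)$ and all $v\in\NN(C,x)$,
$$ \langle v, \Gamma_{x,y}\rangle_{T_xM} \leq \A_\BB |v|\, d(x,y)^2. $$
Since $\Gamma_{x,y} = \exp_x^{-1}(y)$, this is exactly the inequality in Definition~\ref{def:proxHP} with $\alpha = 2\A_\BB$, \emph{provided} $v$ ranges over $\NN(C,x) = N^P_C(x)$ rather than the a priori larger cone $N^L_C(x)$. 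So the remaining point is to extend the bound from proximal normal vectors to limiting normal vectors.

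The main obstacle — and the only nontrivial step — is this passage from $N^P_C$ to $N^L_C$. I would handle it by a limiting argument: pick $v \in N^L_C(x)$ with $|v|\leq\epsilon$, so there exist $x_i \to x$ in $C$ and $v_i \in N^P_C(x_i) = \NN(C,x_i)$ with $v_i \to v$ (the convergence taking place after parallel transport $L_{x_i\to x}$ along short geodesics, which is legitimate once the $x_i$ are close to $x$). For each $i$ and each $y \in C\cap B(\bar x,\epsilon)$, Lemma~\ref{lem:hypo} applied at the point $x_i$ gives $\langle v_i, \Gamma_{x_i,y}\rangle_{T_{x_i}M} \leq \A_\BB |v_i|\, d(x_i,y)^2$, with the \emph{same} constant $\A_\BB$ since all points lie in the fixed bounded set $\BB$ (this uniformity is precisely why local prox-regularity, not mere pointwise prox-regularity, is needed). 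Then I would pass to the limit $i\to\infty$, using that $v_i \to v$, that $d(x_i,y)\to d(x,y)$, and that $\Gamma_{x_i,y} = \exp_{x_i}^{-1}(y) \to \exp_x^{-1}(y) = \Gamma_{x,y}$ — continuity of $(p,q)\mapsto \exp_p^{-1}(q)$ on $\BB$ being guaranteed by Remark~\ref{rem:smoothexp} and the Lipschitz bounds in Assumption~\ref{ass:ic}. One must be slightly careful to transport the inner products to a common tangent space via the isometries $L_{x_i\to x}$ and use that parallel transport of $\Gamma_{x_i,y}$ converges to $\Gamma_{x,y}$; since the $L_{x_i\to x}$ are isometries converging to the identity on the relevant vectors, the limit of the inequalities reads $\langle v, \Gamma_{x,y}\rangle_{T_xM} \leq \A_\BB |v|\, d(x,y)^2$. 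Shrinking $\epsilon$ if necessary so that $B(\bar x,\epsilon)\subset B(\bar x,\rho_0)$ is convex and $2\epsilon \leq \rho(\BB)$ ensures all the hypotheses ($d(x,y)\leq\rho(\BB)$, convexity of the ball) hold simultaneously, and we obtain Definition~\ref{def:proxHP} with $\alpha := 2\A_\BB$ and this $\epsilon$, completing the proof.
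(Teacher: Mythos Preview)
Your proposal is correct and follows essentially the same route as the paper: both fix a small convex ball around $\bar x$, invoke Lemma~\ref{lem:hypo} to get the uniform hypomonotonicity bound on $\NN(C,\cdot)=N^P_C(\cdot)$, and then upgrade it to $N^L_C$ by taking sequences $x_i\to x$, $v_i\in N^P_C(x_i)$, $v_i\to v$, applying the same bound at each $x_i$ and passing to the limit via parallel transport and the smoothness of $p\mapsto\exp_p^{-1}(y)$ (Remark~\ref{rem:smoothexp}). The paper's write-up is slightly more explicit about the decomposition of $\langle w_i,\exp_{x_i}^{-1}(y)\rangle_{x_i}-\langle w,\exp_x^{-1}(y)\rangle_x$ into two terms, but your sketch already identifies the right ingredients.
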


\begin{proof} 
Let $\bar{x} \in C $ and $\BB_0 = B(\bar{x}, 1)  $, then $\bar{x} $ belongs to the convex ball $ \BB= B(\bar{x}, \varepsilon) $, with $\varepsilon= \min(1, \rho(\BB_0)/2) $. Moreover let $x, y  \in C\cap \BB$, it comes $$ d(x,y)\leq 2\varepsilon \leq   \rho(\BB_0) \leq   \rho(\BB), $$ 
since $\BB \subset \BB_0 $.
 Thanks to Lemma \ref{lem:hypo}, for every $v \in N(C,x)$, we have $$ \langle v , \exp^{-1}_x(y) \rangle \leq  E_\BB |v| d(x,y)^2 . $$
Let $w \in N^L_C(x)$  satisfying $|w| \leq \varepsilon$ and $w= \lim w_i$, where $w_i \in N^P_C(x_i)$ with $x_i \in C$ and  $x_i \to x$. For $i$ large enough, $x_i \in \BB$ and $ |w_i| \leq 2 \varepsilon$, hence  $$ \langle w_i , \exp^{-1}_{x_i}(y) \rangle_{x_i} \leq   2 E_\BB \varepsilon  d(x_i,y)^2 . $$
Furthermore, 
\begin{eqnarray*}
  & & \langle w_i , \exp^{-1}_{x_i}(y) \rangle_{x_i} - \langle w, \exp^{-1}_{x}(y) \rangle_{x} \\
 & & \hspace{2cm}  =  \langle L_{x_i\to x} (w_i), L_{x_i\to x}(\exp^{-1}_{x_i}(y)) \rangle_{x}  - \langle w, \exp^{-1}_{x}(y) \rangle_{x}\\
   & & \hspace{2cm}  =  \langle L_{x_i\to x} (w_i)- w, \exp^{-1}_{x}(y) \rangle_{x}  \\
& & \hspace{4cm}  +\langle L_{x_i\to x} (w_i),   L_{x_i\to x}(\exp^{-1}_{x_i}(y))- \exp^{-1}_{x}(y) \rangle_{x}\\
 & & \hspace{2cm}  \to 0. 
\end{eqnarray*}
Indeed the second term tends to zero since $ x \mapsto \exp^{-1}_{x}(y)$ is smooth on $\BB$  thanks to Remark \ref{rem:smoothexp}. 
Thus by passing to the limit, the previous inequality becomes: 
$$\langle w , \exp^{-1}_{x}(y) \rangle_{x} \leq   2 E_\BB \varepsilon  d(x,y)^2  .$$
So we conclude that $C$ is prox-regular at $\bar{x}$. 
\end{proof}

\begin{rem}
In fact, a closed subset $C\subset M$ is prox-regular at $\bar{x} \in C$ according to \cite{HP} if and only if it satisfies a hypomonotonicity property in a convex neighborhood of $\bar{x}$ for every vector of  $\NN(C,\cdot)=N^P_C$. \\
Moreover, if $C\subset M$ is prox-regular at $x \in C$, it is proved in \cite[Lemma 3.7]{HP} that the limiting cone, the Clarke cone (that is the closed convex hull of the limiting cone) and the proximal normal cone are equal : $N^L_C(x) =N^C_C(x) = N^P_{C}(x) = \NN(C,x)$ (where the last equality comes from Corollary \ref{cor:eqcones}). 
\end{rem}

Then we aim to end this section by proving the following property (which is also well-known in the Euclidean space): close enough to a locally prox-regular subset, every point has a unique projection onto it.

\begin{thm} \label{thm:projection}
 Let $C \subset M$ be a locally prox-regular set. Then for every bounded set ${\mathcal B} \subset M$ satisfying $\BB \cap C \neq \emptyset $, there exists $\ell(\BB) >0$ such that for any $x\in {\mathcal B}$ with $d_C(x) < \ell(\BB)$, the projection set is a singleton:
$$ \PPP_C(x) =\{  x^\star\}, $$
with $ x^\star \in C.$
\end{thm}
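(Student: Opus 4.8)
The plan is to fix an enlarged bounded set and to pit the hypomonotonicity of the proximal normal cone (Lemma \ref{lem:hypo}) against the uniform convexity of the squared distance (Corollary \ref{cor:new}). Given a bounded set $\BB$ with $\BB\cap C\neq\emptyset$, I put $\tilde{\BB}:=\{m\in M:\ d(m,\BB)\leq 1\}$ (again bounded), and denote by $E:=E_{\tilde{\BB}}$ the hypomonotonicity constant furnished by Lemma \ref{lem:hypo}, by $A:=A(\tilde{\BB})$ the constant of Corollary \ref{cor:new}, and $\rho:=\rho(\tilde{\BB})$. I then set $\ell(\BB):=\frac{1}{3}\min(1,\ \rho,\ A/(2E))$. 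For $x\in\BB$ with $d:=d_C(x)<\ell(\BB)$, any $c\in C$ with $d(x,c)<2\ell(\BB)$ — in particular every element of $\PPP_C(x)$, and every sufficiently good approximate minimizer of $d(x,\cdot)$ over $C$ — lies in $B(x,2\ell(\BB))\subset\tilde{\BB}$, and any two such points are at mutual distance $<2\ell(\BB)<\rho\leq c(\tilde{\BB})$; hence both Lemma \ref{lem:hypo} and Corollary \ref{cor:new} are available, on $\tilde{\BB}$, for these points.

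\emph{Uniqueness.} Suppose $x_1^\star,x_2^\star\in\PPP_C(x)$. Since $x_i^\star\in\PPP_C(x)=\PPP_C(\gamma_{x_i^\star,\Gamma_{x_i^\star,x}}(1))$, Definition \ref{def:cone} gives $\Gamma_{x_i^\star,x}\in\NN(C,x_i^\star)$, and $|\Gamma_{x_i^\star,x}|=d$ by (\ref{normgam}). Applying Lemma \ref{lem:hypo} on $\tilde{\BB}$ twice (once with $(i,j)=(1,2)$, once with $(i,j)=(2,1)$) gives $\langle\Gamma_{x_i^\star,x},\Gamma_{x_i^\star,x_j^\star}\rangle\leq E\,d\,d(x_1^\star,x_2^\star)^2$. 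On the other hand, Corollary \ref{cor:new} with centre $x$ and $z_1=x_1^\star$, $z_2=x_2^\star$ yields $\langle\Gamma_{x_2^\star,x}-L_{x_1^\star\to x_2^\star}\Gamma_{x_1^\star,x},\Gamma_{x_2^\star,x_1^\star}\rangle\geq A\,d(x_1^\star,x_2^\star)^2$; expanding the left-hand side and using that $L_{x_1^\star\to x_2^\star}$ is a linear isometry with inverse $L_{x_2^\star\to x_1^\star}$ together with the symmetry relation (\ref{eq:symmetry}), it equals $\langle\Gamma_{x_2^\star,x},\Gamma_{x_2^\star,x_1^\star}\rangle+\langle\Gamma_{x_1^\star,x},\Gamma_{x_1^\star,x_2^\star}\rangle$. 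Adding the two hypomonotonicity inequalities therefore forces $A\,d(x_1^\star,x_2^\star)^2\leq 2E\,d\,d(x_1^\star,x_2^\star)^2$, which is impossible when $d(x_1^\star,x_2^\star)>0$ because $d<\ell(\BB)\leq A/(2E)$; hence $x_1^\star=x_2^\star$.

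\emph{Existence.} It remains to check $\PPP_C(x)\neq\emptyset$. I would take a minimizing sequence $(u_n)\subset C$, $d(x,u_n)^2\to d^2$, and prove it is Cauchy; as $C$ is closed in the (metrically complete) manifold $M$, its limit $x^\star$ then satisfies $d(x,x^\star)=d$, i.e. $x^\star\in\PPP_C(x)$. The point is that $\Gamma_{u_n,x}$ need not be a proximal normal, so I would not argue with the $u_n$ directly but rather apply Ekeland's variational principle to the continuous, bounded-below function $c\mapsto d(x,c)^2$ on the complete metric space $C$: for each small $\varepsilon>0$ this produces $c_\varepsilon\in C$ with $d(x,c_\varepsilon)^2\leq d^2+\varepsilon$ which genuinely minimizes $c\mapsto d(x,c)^2+\sqrt{\varepsilon}\,d(c,c_\varepsilon)$ over $C$. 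Writing the first-order optimality inequality of this minimization and expanding $d(x,\cdot)^2$ to second order around $c_\varepsilon$ (via $\nabla_{c_\varepsilon}d(x,\cdot)^2=-2\Gamma_{c_\varepsilon,x}$ from Remark \ref{rem:nablay} and the Hessian bound of Proposition \ref{propH}) shows that $\Gamma_{c_\varepsilon,x}$ behaves like an element of $\NN(C,c_\varepsilon)$ up to an $O(\sqrt{\varepsilon})$ error; re-running the comparison of the uniqueness step on $c_\varepsilon$ and $c_{\varepsilon'}$ (Corollary \ref{cor:new} with centre $x$, plus (\ref{eq:symmetry}) and the isometry property of $L$) then yields $d(c_\varepsilon,c_{\varepsilon'})\to 0$ as $\varepsilon,\varepsilon'\to 0$, and thus the desired limit point. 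This existence step is the main obstacle: hypomonotonicity of $\NN(C,\cdot)$ by itself does not make a minimizing sequence converge (this already fails for suitable closed sets of a Hilbert space), so the variational principle is needed to manufacture honest approximate normals before Corollary \ref{cor:new} can be applied, and one must keep $\ell(\BB)$ small compared with the curvature-dependent constants entering $A$ and Proposition \ref{propH} so that these errors are absorbed.
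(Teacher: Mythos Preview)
Your overall architecture is the paper's: enlarge $\BB$, set $\ell(\BB)$ small against the ratio of the convexity constant $A$ of Corollary~\ref{cor:new} to the hypomonotonicity constant $E$ of Lemma~\ref{lem:hypo}, invoke Ekeland's principle for existence, and compare the two inequalities. Your uniqueness paragraph is correct and in fact tidier than the paper's, which folds uniqueness into the $\epsilon\to 0$ limit of the existence argument.

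The gap is in the existence step, precisely at the sentence ``expanding $d(x,\cdot)^2$ to second order \dots\ shows that $\Gamma_{c_\varepsilon,x}$ behaves like an element of $\NN(C,c_\varepsilon)$ up to an $O(\sqrt\varepsilon)$ error''. A direct second-order expansion of $d(x,\cdot)^2$ along the geodesic from $c_\varepsilon$ to $c\in C$, combined with Ekeland minimality, yields only
\[
\langle \Gamma_{c_\varepsilon,x},\Gamma_{c_\varepsilon,c}\rangle \le K\,d(c_\varepsilon,c)^2 + \tfrac12\sqrt\varepsilon\,d(c_\varepsilon,c),
\]
where $K$ is built from the Hessian bound $C_{\rho}$ of Proposition~\ref{propH} and the $\ddot\gamma$ bound of Lemma~\ref{lem:geodesic}. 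This $K$ carries \emph{no} small factor: it does not scale with $d_C(x)\le\ell(\BB)$, and there is no reason to expect $K<A$. Hence summing the two symmetric inequalities against Corollary~\ref{cor:new} gives $(A-K)\,d(c_\varepsilon,c_{\varepsilon'})\le O(\sqrt\varepsilon+\sqrt{\varepsilon'})$, which is vacuous when $K\ge A$. Shrinking $\ell(\BB)$ does not help, since $A$ and $K$ are fixed by $\tilde\BB$.

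What the paper does at this point (its Step~1) is genuinely different from a Taylor expansion: it passes to a local chart $\phi$, writes the Ekeland minimizer as a constrained minimizer in $H$, applies a Clarke subdifferential sum rule to obtain $[D\phi^{-1}(Z_i)]^*\Gamma_{z_i,x}-\tfrac{\epsilon}{2}U_i\in \NN^C_{\phi(C)}(Z_i)$, transports back to $M$, and finally uses the local prox-regularity of $C$ (via Proposition~\ref{pro}) to identify the Clarke normal cone with $\NN(C,z_i)$. The output is that $w_i:=\Gamma_{z_i,x}+v_i$ is a \emph{genuine} proximal normal with $|v_i|=O(\epsilon)$. Now Lemma~\ref{lem:hypo} applies to $w_i$ and produces the bound $E_{\BB_1}|w_i|\,d(z_1,z_2)^2$ with $|w_i|\le \ell(\BB)+O(\epsilon)$; it is this small factor $|w_i|$, absent from your Taylor route, that lets the choice $\ell(\BB)\le A/(8E_{\BB_1})$ close the comparison with Corollary~\ref{cor:new}. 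Your sketch correctly identifies Ekeland as the engine, but underestimates the work needed to turn the Ekeland point into an honest proximal normal; that step, not the final comparison, is where the paper spends most of its effort.
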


\begin{proof}
Let ${\mathcal B}_1:= \{ m \in M, d(m, {\mathcal B})\leq 1\}$ and $\ell(\BB) :=  \min(\rho({\mathcal B}_1)/2, \tau(\BB_1), 1)$ where $\tau(\BB_1) := \frac{A(\BB_1)}{8 E_{\BB_1}} $, (we recall that these constants are defined in Corollary \ref{cor:new} and in Lemma \ref{lem:hypo}). 
Consider $x\in{\mathcal B}$ with $d_C(x) < \ell(\BB)$, then for every $\epsilon >0$  satisfying $$d_C(x)+\epsilon < \ell(\BB) \leq    \rho(\BB_1)/2$$ we can find $y\in C$ such that
$$ d(x,y)^2 \leq d_C(x)^2+\epsilon^2.$$
Let us consider the map, defined on $M$ by
$ F:= d(x,\cdot)^2.$
Then  by Ekeland's variational principle, we can find $z:= z(\epsilon)\in C$ such that
\begin{itemize}
 \item $d(y,z) \leq 1$,
 \item $d(x,z)^2 \leq d(x,y)^2 \leq  d_C(x)^2+\epsilon^2$,
 \item and $z$ is the minimizer over $C$ of the function $\psi:= F + \epsilon d(\cdot,z)$.
\end{itemize}
Note that $d(x,z) \leq d_C(x)+\epsilon \leq \ell(\BB) \leq 1$ so $d(z, \BB) \leq 1 $ and $z \in \BB_1$. 
We denote $\Pi(x,\epsilon)$ the set of points $z\in C$ satisfying the three last conditions. Clearly, for $\epsilon_1> \epsilon_2$, we have $\Pi(x,\epsilon_2) \subset \Pi(x,\epsilon_1)$. 
Now consider $\epsilon>0$ and $z_1,z_2\in\Pi(x,\epsilon)$. Then $z_1,z_2\in C \cap \BB_1$ and by first-order optimality conditions, we are going to prove that for $i=1,2$, there exists $v_i \in T_{z_i} M$ with $ |v_i|_{T_{z_i} M} \leq c_{{\mathcal B}_1} \epsilon$ (for some constant $c_{{\mathcal B}_1} >0$) such that 
\begin{equation} w_i := \Gamma_{z_i,x} +v_i \in \NN(C, z_i). \label{eq:j} \end{equation}

\mb
{\bf Step 1 :} Proof of (\ref{eq:j}). \\
Let $\phi := U \subset M \rightarrow H$ be a local chart around $z_i$.
By definition, we locally have 
\begin{align*}
 z_i & := \argmin{p\in C} d(x,p)^2 + \epsilon d(p,z_i) \\
 &  = \phi^{-1} \left[\argmin{P\in \phi(C \cap U)} d(\phi^{-1}(X),\phi^{-1}(P))^2 + \epsilon d(\phi^{-1}(P),\phi^{-1}(Z_i))\right] \\
 & = \phi^{-1} \left[\argmin{P\in \phi(U)} d(\phi^{-1}(X),\phi^{-1}(P))^2 + \epsilon d(\phi^{-1}(P),\phi^{-1}(Z_i)) +  I_{\phi(C)}(P)\right]
\end{align*}
where $X=\phi(x) $, $P=\phi(p)$,  $Z_i =\phi(z_i)$ and $I_{\phi(C)} $ represents the indicatrix function of $\phi(C) $. \\
Since $\phi(C) $ is closed, the characteristic function  $I_{\phi(C)}$ is lower semicontinuous. 
So we have
$$ 0 \in \nabla d^2(\phi^{-1}(X),\phi^{-1}(\cdot)) (Z_i) + \epsilon \partial^C (d(\phi^{-1}(\cdot),\phi^{-1}(Z_i))) (Z_i) + \lambda, $$ for some $\lambda \in \NN_{\phi(C)}^C(Z_i)$, where $\partial^C $ and $\NN^C$ represent the Clarke subdifferential and the Clarke normal cone. We refer the reader to  \cite[Section 3.1.1]{BS} for this result. 

As a consequence, there exists $U_i\in H$ with $|U_i|_H\leq 1$ satisfying
$$ 0 = - 2 [D \phi^{-1}(Z_i)]^* (\Gamma_{z_i,x} ) + \epsilon U_i + \lambda,$$
according to Remark \ref{rem:nablay} .   
That implies
$$ [D \phi^{-1}(Z_i)]^* (\Gamma_{z_i,x}) - \frac{\epsilon}{2} U_i \in \NN_{\phi(C)}^C( Z_i).$$
By composing with $([D \phi^{-1}(Z_i)]^*)^{-1}$ we have 
$$ \Gamma_{z_i,x} - \frac{\epsilon}{2} ([D \phi^{-1}(Z_i)]^* )^{-1}(U_i) \in ([D \phi^{-1}(Z_i)]^*) ^{-1}[\NN_{\phi(C)}^C( Z_i)].$$
Moreover, 
\begin{align*} 
w \in ([D \phi^{-1}(Z_i)]^*)^ {-1}[\NN_{\phi(C)}^C(Z_i)] & \Longleftrightarrow  [D \phi^{-1}(Z_i)]^{*}(w) \in \NN_{\phi(C)}^C(Z_i) \\
 & \Longleftrightarrow \forall \tau \in T^C_{\phi(C)}(Z_i), \langle [D \phi^{-1}(Z_i)]^{*}(w) , \tau \rangle \leq 0
\end{align*}
since the Clarke tangent cone $ T^C_{\phi(C)}(Z_i)$ to $\phi(C)$ at $Z_i$ is the polar cone of $\NN^C_{\phi(C)}(Z_i)$. 
As a consequence, 
\begin{align*} 
w \in ([D \phi^{-1}(Z_i)]^*)^ {-1}[\NN_{\phi(C)}^C(Z_i)]  & \Longleftrightarrow \forall \tau \in T^C_{\phi(C)}(Z_i), \langle w , D \phi^{-1}(Z_i)[\tau] \rangle \leq 0 \\
 & \Longleftrightarrow w \in  \left[D \phi^{-1}(Z_i)(T^C_{\phi(C)}(Z_i))\right]^\circ, 
 \end{align*}
by definition of a polar cone. 
Thus, using the change of charts, we obtain that $$ \left[D \phi^{-1}(Z_i)(T^C_{\phi(C)}(Z_i))\right]^\circ=\left[T^C_{C}(z_i)\right]^\circ = \NN^C_C(z_i),.$$
where we have used the definition of the Clarke tangent cone in \cite{HP} and the fact that the  Clarke normal cone is exactly its polar cone. 
Since the set is locally prox-regular, we know that $w \in \NN(C,z_i) $ thanks to Proposition \ref{pro}. 
We conclude the proof by choosing $v_i=-\frac{\epsilon}{2} ([D \phi^{-1}(Z_i)]^*)^ {-1}(U_i)$. In that case, $w_i := \Gamma_{z_i,x} +v_i \in \NN(C, z_i)$ and we have $|v_i| \leq  c_{\BB_1} \epsilon$ (the constant $c_{\BB_1}$ depending on $\phi$ and $\BB_1$).

\mb
{\bf Step 2 :} End of the proof. \\
Since $|w_i| \leq d(z_i, x) +|v_i| \leq d_C(x) + \epsilon + c_{\BB_1} \epsilon \leq \ell(\BB)+c_{{\mathcal B}_1}\epsilon$, 
and $ d(z_1,z_2) \leq  d(z_1, x) + d(z_2, x) \leq \rho(\BB_1)$ from Lemma \ref{lem:hypo}, it comes
$$ \langle w_1,  \Gamma_{z_1,z_2} \rangle_{T_{z_1}M} + \langle w_2,  \Gamma_{z_2,z_1} \rangle_{T_{z_2}M} \leq  2 E_{\BB_1}  ( \ell(\BB) +c_{\BB_1} \epsilon ) d(z_1,z_2)^2, $$ 
where $E_{\BB_1}   >0$ . 
Using the parallel transport, 
$$\langle w_1,  \Gamma_{z_1,z_2} \rangle_{T_{z_1}M} = \langle L_{z_1\to z_2} (w_1), -\Gamma_{z_2,z_1} \rangle_{T_{z_2}M},$$
hence we conclude to
$$ \langle w_2-L_{z_1\to z_2} (w_1),  \Gamma_{z_2,z_1} \rangle_{T_{z_2}M} \leq  2 E_{\BB_1}  ( \ell(\BB)+ c_{\BB_1} \epsilon ) d(z_1,z_2)^2.$$
By definition of the $w_i$, it comes
$$ \langle \Gamma_{z_2,x} +v_2-L_{z_1\to z_2} (\Gamma_{z_1,x} +v_1),  \Gamma_{z_2,z_1} \rangle_{T_{z_2}M} \leq 2 E_{\BB_1} ( \ell(\BB)+ c_{\BB_1}\epsilon ) d(z_1,z_2)^2$$
which yields
$$ \langle \Gamma_{z_2,x} -L_{z_1\to z_2}(\Gamma_{z_1,x}),  \Gamma_{z_2,z_1} \rangle_{T_{z_2}M}  \leq   2 E_{\BB_1}  ( \ell(\BB)+ c_{{\mathcal B}_1}\epsilon )  d(z_1,z_2)^2 + 2 c_{\BB_1} \epsilon d(z_1,z_2).$$

As $z_1, z_2 \in B(x, \ell(\BB))$ with $ \ell(\BB)< \rho(\BB_1)$,  we deduce from Corollary \ref{cor:new} that $$ \langle \Gamma_{z_2,x} -L_{z_1\to z_2}(\Gamma_{z_1,x}),  \Gamma_{z_2,z_1} \rangle_{T_{z_2}M}  \geq A(\BB_1) d(z_1,z_2)^2.$$ 
Hence we observe that

$$\left(A(\BB_1)- 2 E_{\BB_1}  \ell(\BB)-2 E_{\BB_1}   c_{\BB_1} \epsilon\right) d(z_1,z_2) \leq 2 c_{\BB_1}\epsilon$$
If $\epsilon < \frac{\tau(\BB_1)}{ c_{\BB_1} }$,  then $A(\BB_1)- 2 E_{\BB_1}  \ell(\BB)-2 E_{\BB_1}   c_{\BB_1} \epsilon \geq A(\BB_1)/2  > 0 $ since $ \ell(\BB) \leq \tau(\BB_1)  = \frac{A(\BB_1)}{8 E_{\BB_1} }$. 
As a consequence 
\begin{equation}
d(z_1,z_2) \leq \frac{2 c_{\BB_1}}{\left(A(\BB_1)- 2 E_{\BB_1}  \ell(\BB)-2 E_{\BB_1} c_{\BB_1}\epsilon\right)} \epsilon := \kappa \epsilon. 
\label{eq:dz1z2}
\end{equation}

Let $\epsilon_n \downarrow 0 $ and $z_n \in \Pi(x,\epsilon_n) $. It then follows from that for $m\geq n$, $$ d(z_n, z_m) \leq  \kappa \epsilon_n, $$ hence $(z_n)$ is a Cauchy sequence that converges to a point $z^\star \in C$ ($M$ is metrically complete). We obtain $z^\star \in \PPP_C(x)$ since $d(x,z^\star) = d_C(x)$, consequently $ \PPP_C(x) \neq \emptyset $. Furthermore,  taking $\epsilon =0$ in \eqref{eq:dz1z2}, we can specify that
 $ \PPP_C(x) =\{ z^\star\}$. 
\end{proof}

\begin{rem}
Note that the constant $\ell(\BB)$ depends on the geometrical properties of $M$ but also on $C$ through the term $E_\BB$ (which is determined by the quantities $\eta_\BB$ see Remark \ref{rem:constantE} ). 
\label{rem:constantell}
\end{rem}

%%%%%%%%%%%%%%%%%%%%%%%%%%%%%%%%%%%%%%%%%%%%%%%%%%%%%%%%%%%%%%%%%%%%%%%%%%%%%%%%%%%
%%%%%%%%%%%%%%%%%%%%%%%%%%%%%%%%%%%%%%%%%%%%%%%%%%%%%%%%%%%%%%%%%%%%%%%%%%%%%%%%%%%
%%%%%%%%%%%%%%%%%%%%%%%%%%%%%%%%%%%%%%%%%%%%%%%%%%%%%%%%%%%%%%%%%%%%%%%%%%%%%%%%%%%

\section{Sweeping process in a Hilbert manifold} \label{sec:sweep}

In this section, we consider a Riemannian Hilbert manifold $M$ satisfying Assumption \ref{ass:ic}. 

\begin{ass} \label{ass:C}  Let $C:[0,T ] \rightrightarrows M $ be a multivalued map satisfying that 
for all $t \in [0,T ], $  the set $C(t)$ is nonempty and locally prox-regular. More precisely, for all bounded set $\BB \subset M$, there exists $0 < \eta_\BB  \leq \rho(\BB)$ such that for every $t \in [0,T ]$, $s \in [0, \eta_\BB)$, $x \in C(t)$ and $v\in N(C(t), x) \setminus \{0\}$,  $$ x \in \PPP_{C(t)}\left(\gamma_{x, \frac{v}{|v|}}(s)\right).$$ Furthermore, $C$ is supposed to be a Lipschitz map: for all $t,s \in [0,T ]$, $d_H(C(t),C(s)) \leq K_L |t-s|$. 
\end{ass}

\begin{ass} \label{ass:f}
 We assume that $f :(t,x) \in [0,T] \times M \mapsto f(t,x) \in T_xM $ is bounded and satisfies a Lipschitz regularity
 in the following sense: 
 $$ g : \left\{ \begin{array}{ccc}
                 [0,T] \times M & \rightarrow & TM \\
                 (t,x) & \mapsto & (x,f(t,x))
                \end{array} \right.$$
is Lipschitz with constant $L_f$. More precisely, for all bounded set $\BB$ of $M$,  for all $(t_1, t_2)
\in [0,T]^2$ and $ (x_1, x_2)
\in \BB^2$ verifying $d(x_1, x_2) < \rho(\BB)$ , 
\begin{align*}
 d_{TM}(g(t_1,x_1), g(t_2,x_2)) &:=d(x_1,x_2) + | L_{x_1 \to x_2} (f(t_1,x_1)) - f(t_2,x_2)| \\
&\leq L_f (|t_1-t_2| + d(x_1,x_2) ).
\end{align*}
\end{ass}

\begin{thm}  Under Assumptions \ref{ass:ic}, \ref{ass:C} and \ref{ass:f}, for every $x_0\in M$, for all $T>0$,  there exists a unique solution $x \in W^{1,\infty}([0,T], M)$ such that
\be{eq:sys} 
\left\{ \begin{array}{ll}
 \dot{x}(t)+\NN(C(t),x(t)) \ni f(t,x(t))  \textmd{ for a.e. } t \in  [0,T] \vsp  \\
 x(0)=x_0.
\end{array} \right. \ee
\end{thm}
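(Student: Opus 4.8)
The strategy is the classical catching-up (Moreau) algorithm, adapted to the manifold setting by systematically replacing segments with geodesics and differences with $\exp^{-1}$. Fix $x_0\in M$ and $T>0$. Because $x_0$ need not lie in $C(0)$, the first step is a projection: by the Lipschitz property of $C$ and Theorem \ref{thm:projection}, for $x_0$ sufficiently close to $C(0)$ (and in general after a preliminary argument showing the solution instantly reaches $C(0)$) we may take an initial point in $C(0)$. We then work on a bounded subset $\BB$ of $M$ chosen large enough to contain all the discretized trajectories; its existence follows from the boundedness of $f$ and $K_L$, which force the trajectories to travel at finite speed and hence remain in a fixed ball on $[0,T]$. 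All constants $\rho(\BB)$, $\eta_\BB$, $E_\BB$, $\ell(\BB)$, $C_e(\BB)$, $L(\BB)$ below refer to this $\BB$ (or a slight enlargement $\tilde\BB$).

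\textbf{Construction of discretized solutions.} For $n\in\mathbb N$ let $h=T/n$ and $t_i^n=ih$. Define $x_0^n=x_0$ and, inductively,
$$
x_{i+1}^n := \PPP_{C(t_{i+1}^n)}\!\left(\exp_{x_i^n}\!\big(h\,f(t_i^n,x_i^n)\big)\right),
$$
which is well-defined and single-valued for $n$ large by Theorem \ref{thm:projection}, since $d_{C(t_{i+1}^n)}(\exp_{x_i^n}(hf(t_i^n,x_i^n)))\le \|f\|_\infty h + K_L h$ can be made smaller than $\ell(\BB)$. The defining property of the metric projection gives $\exp_{x_{i+1}^n}^{-1}(\exp_{x_i^n}(hf(t_i^n,x_i^n)))\in \NN(C(t_{i+1}^n),x_{i+1}^n)$ (a proximal normal vector), and one records the quantitative estimate $d(x_{i+1}^n,x_i^n)\le (\|f\|_\infty+K_L)h+o(h)$, using the Lipschitz bounds on $\exp$ and $\exp^{-1}$ from Assumption \ref{ass:ic}. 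Interpolate: let $x^n(t)$ be the geodesic (reparametrised) joining $x_i^n$ to $x_{i+1}^n$ on $[t_i^n,t_{i+1}^n]$, and let $\theta_n(t)=t_{i+1}^n$, $\tau_n(t)=t_i^n$ be the associated step functions. Then $x^n\in W^{1,\infty}$ with $\|\dot x^n\|_\infty$ bounded uniformly in $n$, $x^n(t)\in C(\theta_n(t))$, and $x^n$ satisfies a differential inclusion $\dot x^n(t) + w^n(t)\ni $ (transported) $f(\tau_n(t),x^n(\tau_n(t)))$ with $w^n(t)\in \NN(C(\theta_n(t)),x^n(\theta_n(t)))$ up to controlled error terms coming from the geodesic curvature estimate (Lemma \ref{lem:geodesic}).

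\textbf{Passage to the limit.} The family $(x^n)$ is uniformly Lipschitz, hence (after verifying the trajectories stay in a fixed bounded — in finite dimension, relatively compact — region, and using the completeness of $M$ together with the local chart structure, or an Arzelà–Ascoli argument in charts) one extracts a subsequence converging uniformly to some $x\in W^{1,\infty}([0,T],M)$, with $\dot x^n \rightharpoonup \dot x$ weakly-$*$; the Lipschitz continuity of $C$ gives $x(t)\in C(t)$. It remains to identify the limit inclusion: one must pass to the limit in $\dot x^n(t)\in -\NN(C(\theta_n(t)),x^n(\theta_n(t)))+f_n(t)$. This is where the hypomonotonicity property (Theorem \ref{thm:hypo} / Lemma \ref{lem:hypo}) is essential: it plays the role of a one-sided Lipschitz/upper-semicontinuity substitute for the (non-u.s.c.) map $x\mapsto\NN(C,x)$. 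Concretely, pairing $\dot x^n$ against $\exp^{-1}$-type test vectors and using Lemma \ref{lem:hypo} to bound terms like $\langle w^n(t),\Gamma_{x^n(\theta_n),y}\rangle$ by $E_\BB|w^n(t)|d(x^n(\theta_n),y)^2$, one shows the limit $\dot x(t)$ satisfies, for a.e.\ $t$ and every $y\in C(t)$ near $x(t)$, the proximal-normal characterization of Proposition \ref{prop:hypomo}, i.e.\ $f(t,x(t))-\dot x(t)\in\NN(C(t),x(t))$.

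\textbf{Uniqueness.} Given two solutions $x,y$, set $\varphi(t)=d(x(t),y(t))^2$. Using Lemma \ref{lemb} ($\nabla_x d^2(x,y)=-2\Gamma_{x,y}$) and the second-order expansion of $d^2$ along geodesics (Corollary \ref{cor:new} supplies the curvature-controlled lower bound on $\tfrac{d^2}{dt^2}d^2$, hence an upper bound on $\varphi'$ in terms of the inner products of the velocities with $\Gamma_{x,y}$, $\Gamma_{y,x}$), one gets
$$
\varphi'(t) \le 2\langle f(t,x(t))-\dot{\tilde v}_x,\Gamma_{x,y}\rangle + 2\langle f(t,y(t))-\dot{\tilde v}_y,\Gamma_{y,x}\rangle + (\text{curvature term})\cdot\varphi(t),
$$
where the normal-cone parts are estimated from above by $2E_\BB(|\dot x|+|\dot y|)\varphi(t)$ via Lemma \ref{lem:hypo}, and the perturbation parts by $2L_f\big(|t-t|+d(x,y)\big)\cdot d(x,y)$ via Assumption \ref{ass:f}; altogether $\varphi'(t)\le K\varphi(t)$ for a constant $K$ depending only on $\BB$, $E_\BB$, $L_f$ and the velocity bound. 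Since $\varphi(0)=0$, Grönwall gives $\varphi\equiv 0$, hence $x=y$.

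\textbf{Main obstacle.} The delicate point is the passage to the limit in the normal-cone inclusion: unlike the Hilbert-space case one cannot simply invoke weak closedness of a fixed maximal monotone graph, and the sets $C(\theta_n(t))$, the base points $x^n(\theta_n(t))$, and the tangent spaces $T_{x^n(\theta_n(t))}M$ all move simultaneously. Handling this requires careful use of parallel transport to compare vectors living in different fibers, the local Lipschitz control of $\exp$ and $D\exp^{-1}$ from Assumption \ref{ass:ic} to pass transport estimates to the limit, and Proposition \ref{prop:hypomo} (the quadratic characterization of $\NN(C,x)$) as the closedness substitute supplied by prox-regularity. A secondary technical nuisance is keeping every discretized trajectory inside a single bounded set on which all the geometric constants are uniform, and verifying (in the infinite-dimensional case) that the needed compactness for extracting a uniformly convergent subsequence is available — here one uses that the perturbation and the incremental projection errors are the only sources of ``new directions'' and that $f$ takes values controlled in a fixed manner, so the argument proceeds chart-by-chart with the Arzelà–Ascoli theorem applied to the equicontinuous, pointwise-relatively-compact (or, in infinite dimensions, suitably precompact by the structure of the iteration) family $(x^n)$.
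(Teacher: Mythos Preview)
Your overall architecture (catching-up scheme, hypomonotonicity to identify the limit, Gr\"onwall for uniqueness) matches the paper, but there is a genuine gap in the ``passage to the limit'' step, and you have in fact flagged it yourself without resolving it.

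You propose to extract a uniformly convergent subsequence of $(x^n)$ by an Arzel\`a--Ascoli argument in charts, noting that in infinite dimensions one would need the family to be ``suitably precompact by the structure of the iteration''. This does not work: in an infinite-dimensional Hilbert manifold the equi-Lipschitz family $(x^n)$ has no reason to be pointwise relatively compact, and nothing in the scheme forces the iterates into a compact set. The paper avoids this issue entirely by a different mechanism: it proves directly that $(x^n)_n$ is a \emph{Cauchy sequence} in $C^0([0,\bar T],M)$. Concretely, for two discretizations $x^n$ and $x^m$ one applies the hypomonotonicity inequality (Lemma~\ref{lem:hypo}) at $x^n(\theta^n(t))$ with $y=x^m(t)$ and symmetrically, transports everything to a common tangent space, and after estimating all the cross terms obtains
\[
\frac{d}{dt}\, d(x^n(t),x^m(t))^2 \le C\, d(x^n(t),x^m(t))^2 + \mathcal O(h),
\]
whence Gr\"onwall gives $\sup_t d(x^n(t),x^m(t))^2=\mathcal O(h)$. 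Metric completeness of $M$ then yields convergence of the \emph{whole} sequence, with no compactness required. This Cauchy estimate is essentially the same computation as your uniqueness step, run on two discrete solutions rather than two continuous ones; once you see this, the argument is straightforward but it is the key idea you are missing.

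A secondary remark: even once you have uniform convergence $x^n\to x$, weak-$*$ convergence of $\dot x^n$ alone is not enough to pass to the limit in the one-sided inequality characterizing the normal cone. The paper handles this by working in a local chart, applying Mazur's lemma to replace $\dot y^n$ by convex combinations $z^n\to\dot y$ strongly in $L^1$, and only then passing to the limit in the scalar inequality; you should make this step explicit rather than leaving it at ``pairing $\dot x^n$ against $\exp^{-1}$-type test vectors''.
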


\mb The proof is a mixture of the classical one in a Hilbert space (see the papers cited in the introduction) and of the previous results extending arguments in a Riemannian context. 

\begin{proof}
 
\mb {\bf First step: } Construction of discrete solutions. \\
We set $\K_0:=M\cap
B(x_0, 2 T\|f\|_\infty + K_L T )$. We can choose $\bar T \leq T$ such that $ 2 \bar T \|f\|_\infty + K_L \bar T< \min(\eta_{\BB_0}/2, \ell(\BB_0) )$, where $\ell(\BB_0)$ is defined in Theorem \ref{thm:projection}  by replacing $C$ with $C(0)$. 
Note that the quantities $\eta_{\BB_0}$,  $\ell(\BB_0) $ are the same for all the sets $C(t), t \in [0,T]$ thanks to Remark \ref{rem:constantell} and Assumption \ref{ass:C}. \\
Then we define $\K:=M\cap
B(x_0, 2 \bar T\|f\|_\infty + K_L \bar T ) \subset \K_0$, hence $\rho(\K)/2 \geq \rho(\K_0)/2 $.  As a consequence, it yields $$2 \bar T \|f\|_\infty + K_L \bar T< \eta_{\BB_0}/2  \leq   \rho(\BB_0)/2 \leq \rho(\K)/2.$$   
We fix a time-step $h=\bar T/n$ which obviously satisfies
\be{eq:1} h \|f\|_\infty  \leq \rho(\K)/2  \ee
So we consider a subdivision of the time-interval $ J=[0,\bar T]$ defined by $t^n_i = ih$ for $i\in\{0,..,n\}$. We build $(x^n_i)_{0\leq i\leq n}$ as follows:
\be{scheme} \left\{ \begin{array}{l}
                x^n_0=x_0 \vsp \\
                x^n_{i+1}\in \PPP_{C(t^n_{i+1})}\left[\gamma_{x^n_i,f(t^n_i,x^n_i)}(h)\right].
           \end{array} \right. \ee
First let us check that this scheme is well-defined. 
  \begin{lem} \label{lem:schemewp}
  For all $i \in \{0, \dots n\}$,   $ x_i^n$ and  for all $i \in \{0, \dots n-1\}$ $\gamma_{x^n_i,f(t^n_i,x^n_i)}(h) $ are well-defined and belong to $\BB$.  Furthermore for $i \in \{0, \dots n-1\} $ , $d(x^n_i, x_{i+1}^n) \leq \rho(\BB)/2$ and  $ d( x_{i+1}^n, \gamma_{x^n_i,f(t^n_i,x^n_i)}(h) ) \leq \rho(\BB)/2$.
  \end{lem}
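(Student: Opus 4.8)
The plan is to establish the lemma by a finite induction on $i \in \{0,\dots,n\}$ in which one carries along two invariants: that $x^n_i \in C(t^n_i)$, and that $d(x_0,x^n_i) \le (2\|f\|_\infty + K_L)\, t^n_i$. The second invariant is the decisive one, since together with $t^n_i \le \bar T$ it forces $d(x_0,x^n_i) \le 2\bar T\|f\|_\infty + K_L\bar T$, so that every iterate --- and, as will be seen, every intermediate geodesic endpoint --- remains in $\BB$; this is exactly the fixed bounded set on which the constants used below ($\rho$ from Assumption \ref{ass:ic}, $\eta_{\BB_0}$ from Assumption \ref{ass:C}, and $\ell(\BB_0)$ from Theorem \ref{thm:projection}) are available, and we recall that $\eta_{\BB_0}$ and $\ell(\BB_0)$ may be taken independent of $t\in[0,T]$ by Remark \ref{rem:constantell} and Assumption \ref{ass:C}. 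The base case is immediate: $x^n_0 = x_0 \in C(0)$ and $d(x_0,x^n_0)=0$, so $x^n_0 \in \BB$.

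For the inductive step at an index $i \le n-1$, I would proceed as follows. Put $v_i := f(t^n_i,x^n_i) \in T_{x^n_i}M$, so $|v_i|\le\|f\|_\infty$; since $M$ is geodesically complete the geodesic $\gamma_{x^n_i,v_i}$ is defined on all of $\R$, so $y_i := \gamma_{x^n_i,v_i}(h)$ is well-defined, and constant speed of a geodesic gives $d(x^n_i,y_i) \le h\|f\|_\infty$. Combining this with the induction hypothesis and $t^n_i \le \bar T - h$ yields $d(x_0,y_i) < 2\bar T\|f\|_\infty + K_L\bar T$, hence $y_i \in \BB \subset \BB_0$. Next, from $x^n_i \in C(t^n_i)$ and $d_H(C(t^n_i),C(t^n_{i+1})) \le K_L h$ there is a point of $C(t^n_{i+1})$ at distance at most $K_L h$ from $x^n_i$; consequently $\BB_0 \cap C(t^n_{i+1}) \ne \emptyset$ and $d_{C(t^n_{i+1})}(y_i) \le (\|f\|_\infty + K_L)h \le 2\bar T\|f\|_\infty + K_L\bar T < \ell(\BB_0)$ by the choice of $\bar T$. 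Now Theorem \ref{thm:projection}, applied to $C(t^n_{i+1})$ at the point $y_i$, shows that $\PPP_{C(t^n_{i+1})}(y_i)$ is a singleton, so $x^n_{i+1}$ as defined in (\ref{scheme}) is well-defined and unique and $x^n_{i+1} \in C(t^n_{i+1})$. Finally $d(y_i,x^n_{i+1}) = d_{C(t^n_{i+1})}(y_i) \le (\|f\|_\infty + K_L)h$, so $d(x^n_i,x^n_{i+1}) \le (2\|f\|_\infty + K_L)h$ and, by the triangle inequality, $d(x_0,x^n_{i+1}) \le (2\|f\|_\infty + K_L)\, t^n_{i+1} \le 2\bar T\|f\|_\infty + K_L\bar T$, i.e.\ $x^n_{i+1}\in\BB$; this closes the induction and shows in passing that each $\gamma_{x^n_i,f(t^n_i,x^n_i)}(h) = y_i$ lies in $\BB$.

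The two distance estimates in the statement then fall out of what the induction produced: for every $i\le n-1$ one has $d(x^n_i,x^n_{i+1}) \le (2\|f\|_\infty + K_L)h$ and $d(x^n_{i+1},\gamma_{x^n_i,f(t^n_i,x^n_i)}(h)) \le (\|f\|_\infty + K_L)h$, and since $h\le\bar T$ and, by the chain of inequalities displayed just before the lemma, $2\bar T\|f\|_\infty + K_L\bar T < \rho(\BB_0)/2 \le \rho(\BB)/2$, both are bounded by $\rho(\BB)/2$. The delicate point --- the main obstacle --- is the simultaneous propagation of the two invariants: one needs $x^n_i\in C(t^n_i)$ precisely so that the Lipschitz continuity of $C$ controls $d_{C(t^n_{i+1})}(y_i)$ and Theorem \ref{thm:projection} can manufacture the next iterate, and one needs the quantitative bound $d(x_0,x^n_i)\le(2\|f\|_\infty+K_L)t^n_i$ precisely so that all the objects produced remain inside the single bounded set where the uniform constants $\rho$, $\eta_{\BB_0}$ and $\ell(\BB_0)$ are valid; the value of $\bar T$ was chosen exactly so that one step of the scheme moves by less than both $\rho(\BB)/2$ and $\ell(\BB_0)$.
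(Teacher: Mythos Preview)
Your proof is correct and follows essentially the same inductive strategy as the paper's own argument: one propagates membership in $C(t^n_i)$ and the quantitative bound $d(x_0,x^n_i)\le(2\|f\|_\infty+K_L)t^n_i$, uses the Lipschitz property of $C$ to control $d_{C(t^n_{i+1})}(y_i)$, invokes Theorem \ref{thm:projection} to produce the next iterate, and then reads off the two distance estimates. Minor cosmetic differences---you invoke geodesic completeness rather than $h\|f\|_\infty\le\rho(\BB)/2$ for the existence of $\gamma_{x^n_i,v_i}(h)$, and you verify $\BB_0\cap C(t^n_{i+1})\ne\emptyset$ by exhibiting a nearby point rather than by contradiction---do not change the substance.
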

  \begin{proof}
  First $x^n_0= x_0 \in \BB$ and since $h \|f\|_\infty \leq \rho(\BB)$, $\gamma_{x^n_0,f(t^n_0,x^n_0)}(h) $ exists.
  Moreover $ d(x_0, \gamma_{x^n_0,f(t^n_0,x^n_0)}(h) )  \leq h \|f\|_\infty $ hence $\gamma_{x^n_0,f(t^n_0,x^n_0)}(h)  \in \BB$. 
  Assume that for some $i<n$, $ x_i^n$ and $\gamma_{x^n_i,f(t^n_i,x^n_i)}(h) $ are well-defined and belong to $\BB$.
Let us show that  $x_{i+1}^n $ and  (if $i<n-1$)  $\gamma_{x^n_{i+1},f(t^n_{i+1},x^n_{i+1})}(h) $ satisfy the same properties. 
  Since $ d(x_i^n,  \gamma_{x^n_i,f(t^n_i,x^n_i)}(h)  )  \leq h \|f\|_\infty $ and $d_{ C(t_{i+1}^{n})}(x_i^n) \leq d_H(C(t_i^{n}), C(t_{i+1}^{n}) \leq K_Lh $ then 
\begin{align*}
d_{C(t_{i+1}^{n})}(\gamma_{x^n_i,f(t^n_i,x^n_i)}(h)) &\leq d(x_i^n, \gamma_{x^n_i,f(t^n_i,x^n_i)}(h)) + d_{C(t_{i+1}^{n})}(x_i^n) \\
&\leq h(\|f\|_\infty +K_L) \leq \ell(\BB_0) . 
\end{align*}
In addition $ \BB_0 \cap C(t_{i+1}^{n}) \neq \emptyset $. Otherwise, for all $z \in C(t^n_{i+1})$,  $d(x_0, z) \geq 2 T \|f\|_\infty + K_L T$ and so $d_{C(t^n_{i+1})} (x_0) \geq 2 T \|f\|_\infty + K_L T $.  Yet $d_{C(t^n_{i+1})} (x_0) \leq d_H(C(0),C(t^n_{i+1}) ) \leq K_L T $. 
Consequently $\PPP_{C(t^n_{i+1})}\left[\gamma_{x^n_i,f(t^n_i,x^n_i)}(h) \right ] \neq \emptyset $ according Theorem \ref{thm:projection}, so $ x_{i+1}^n$ exists. 
  Moreover for $ 0 \leq j \leq i$, 
\begin{align*}
d(x_j^n, x_{j+1}^n)& \leq d(x_j^n, \gamma_{x^n_j,f(t^n_j,x^n_j)}(h)) + d(x_{j+1}^n, \gamma_{x^n_j,f(t^n_j,x^n_j)}(h)) \\
&\leq h  \|f\|_\infty + d_{C(t_{j+1}^{n})}(\gamma_{x^n_j,f(t^n_j,x^n_j)}(h))  \\
&\leq h(2  \|f\|_\infty +K_L) \leq \rho(\BB)/2.  
\end{align*}
 Thus $d(x_0, x_{i+1}^n) \leq (i+1)h (2  \|f\|_\infty +K_L) \leq  \bar{T}(2  \|f\|_\infty +K_L), $ hence $ x_{i+1}^n$ belongs to $\BB$ and $$d(x_{i+1}^n,  \gamma_{x^n_{i},f(t^n_{i},x^n_{i})}(h)  )   = d_{C(t_{i+1}^{n})}(\gamma_{x^n_i,f(t^n_i,x^n_i)}(h)) \leq h(\|f\|_\infty +K_L)  \leq  \rho(\BB)/2.$$  
Since $ h \|f\|_\infty  \leq \rho(\BB)/2$, $ \gamma_{x^n_{i+1},f(t^n_{i+1},x^n_{i+1})}(h)  $  is well-defined.
Finally  if $i<n-1$,  
$$\begin{array}{lcl}
 d(x_0, \gamma_{x^n_{i+1},f(t^n_{i+1},x^n_{i+1})}(h)) & \leq & d(x_0, x_{i+1}^n) + d(x_{i+1}^n, \gamma_{x^n_{i+1},f(t^n_{i+1},x^n_{i+1})}(h)) \vsp \\
  &\leq & (i+2)h (2  \|f\|_\infty +K_L) , \vsp \\
&\leq & nh (2  \|f\|_\infty +K_L) \leq  \bar{T}(2  \|f\|_\infty +K_L),
  \end{array}$$
  
   so $ \gamma_{x^n_{i+1},f(t^n_{i+1},x^n_{i+1})} \in \BB$.  The proof is also ended. 
  \end{proof}
Now thanks to the points $(x^n_i)_{0\leq i\leq n}$ we define two
piecewise maps $x^n$ and $f^n$ on $J$ in the following way:
\be{eqf} \forall t \in J_i:=[ih, (i+1)h[ \virg f^n(t):=f (t^n_i ,x^n_i) \in T_{x^n_i}M, f^n(\bar T):=f (t^n_n ,x^n_n) \ee
and
\begin{align}
\forall t \in J_i:=[ih, (i+1)h[ \virg x^n(t) &:=
\gamma_{x^n_i,\Gamma_{x^n_i,x^n_{i+1}}}\left(\frac{t-ih}{h}\right),  x^n(\bar T ):=x^n_n.
\label{eqxn}
\end{align}
The function $x^n$ is continuous on $J=[0,\bar T]$.
Note that $\Gamma_{x^n_i,x^n_{i+1}} $ is well-defined since $d(x^n_{i+1},x^n_i ) <\rho(\BB)/2 $ (according to Lemma  \ref{lem:schemewp}).  

Moreover we define two other
piecewise maps $\tau^n$ and $\theta^n$ on $J$ in the following way:
$$\forall t \in J_i:=[ih, (i+1)h[ \virg \tau^n(t):=t^n_i,   \tau^n(\bar T):=\bar T \textmd{ and
} \theta^n(t):=t^n_{i+1}, \theta^n(\bar T):= \bar T . $$

%%%%%%%%%%%%%%%%%%%%%%%%%%%%%%%%%%%%%%%%%%%%%
\mb {\bf Second step: } Boundedness of the discretized velocities.\\
We claim that the discretized velocities are uniformly bounded. Indeed
for $n$, $i\in\{0,...,n-1\}$ and $t\in J_i$, the scheme (\ref{scheme})
with (\ref{eqxn}) gives
\begin{align}
 |\dot x^n(t)| & = \frac{1}{h} \left|\dot \gamma_{x_i^n,\Gamma_{x_i^n,x_{i+1}^n}}\left(\frac{t-ih}{h}\right)\right| \nonumber \\
  & \leq \frac{1}{h} |\Gamma_{x_i^n,x_{i+1}^n}| = \frac{d(x_i^n,x_{i+1}^n)}{h} \nonumber \\
  &  \leq 2 \|f\|_{\infty} + K_L. \label{eq:vit}
\end{align}
So, let 
\be{eq:V} V:= \sup_{i,n} h^{-1} d(x^n_{i+1},x^n_i) \leq 2 \|f\|_\infty +K_L <\infty. \ee
 Thus we observe
 that \be{eq:borne} \forall n \ \textrm{and} \ \forall t, \quad  x^n(t) \in \K. \ee 

%%%%%%%%%%%%%%%%%%%%%%%%%%%%%%%%%%%%%%%%%%%%%
\mb {\bf Third step: } Differential inclusion for the discrete solutions. \\
We  are now looking for a discrete differential inclusion satisfied by the function $x^n$. More precisely, we want to check that for almost every $i\in\{0,...,n-1\}$, we have
\be{inclu:amontrer}  \Gamma_{x^n_{i+1},x^n_i} + h f(t^n_{i+1},x^n_{i+1}) +\RR(h^2)\in \NN(C(t^n_{i+1}),x^n_{i+1}),\ee
with $\RR(h^2) \in T_{x^n_{i+1}} M $ and $ |\RR(h^2)| =\OO(h^2) $.
By definition of the scheme (\ref{scheme}), we know that (taking $\epsilon=1$ in the definition of proximal normal vectors and writing $ \gamma_{x^n_i,f(t^n_i,x^n_i)}(h) = \gamma_{x^n_i,v}(1) $ where $v= \Gamma_{x^n_{i+1},\gamma_{x^n_i,f(t^n_i,x^n_i)}(h)}  $ )
\be{aa} \Gamma_{x^n_{i+1},\gamma_{x^n_i,f(t^n_i,x^n_i)}(h)} \in \NN(C(t^n_{i+1}),x^n_{i+1}).\ee
Note that $\Gamma_{x^n_{i+1},\gamma_{x^n_i,f(t^n_i,x^n_i)}(h)}$ is well-defined since $d(x^n_{i+1},\gamma_{x^n_i,f(t^n_i,x^n_i)}(h)) \leq \rho(\BB)/2$ (according to Lemma \ref{lem:schemewp}).
Thanks to the smoothness of the exponential map and of the geodesics, let us check the following equality:  
\begin{lem}
\be{eq:r1} \Gamma_{x^n_{i+1},\gamma_{x^n_i,f(t^n_i,x^n_i)}(h)} = \Gamma_{x^n_{i+1},x^n_i} + h D  \exp_{x^n_{i+1}}^{-1}(x^n_i) [f(t^n_i,x^n_i)] + \RR_1 (h^2),\ee
\end{lem}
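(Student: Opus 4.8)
\textbf{Proof proposal for Lemma (equation \eqref{eq:r1}).}

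The plan is to read \eqref{eq:r1} as a first-order Taylor expansion in the variable $h$ of the $T_{x^n_{i+1}}M$-valued map $h \mapsto \Gamma_{x^n_{i+1},\gamma_{x^n_i,f(t^n_i,x^n_i)}(h)} = \exp_{x^n_{i+1}}^{-1}\bigl(\gamma_{x^n_i,f(t^n_i,x^n_i)}(h)\bigr)$. First I would introduce the notation $p := x^n_{i+1}$, $q := x^n_i$, $w := f(t^n_i,x^n_i) \in T_qM$, and define the curve $c(h) := \gamma_{q,w}(h) = \exp_q(hw)$, so that $c(0) = q$ and $\dot c(0) = w$; by Lemma \ref{lem:schemewp} all the points stay in $\BB$ and within distance $\rho(\BB)/2$ of one another, so $\exp_p^{-1}$ is well-defined and, by Remark \ref{rem:smoothexp} together with Assumption \ref{ass:ic}, smooth on $\BB$. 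Hence the map $\Phi : h \mapsto \exp_p^{-1}(c(h))$ is a $C^2$ curve in the fixed Hilbert space $T_pM$ on the relevant interval $[0,h]$.

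Next I would apply Taylor's formula with integral (or Lagrange) remainder to $\Phi$ at $h = 0$:
$$ \Phi(h) = \Phi(0) + h\,\Phi'(0) + \RR_1(h^2), \qquad \RR_1(h^2) := \int_0^h (h-s)\,\Phi''(s)\,ds . $$
Here $\Phi(0) = \exp_p^{-1}(q) = \Gamma_{p,q} = \Gamma_{x^n_{i+1},x^n_i}$, and by the chain rule $\Phi'(0) = D\exp_p^{-1}(c(0))[\dot c(0)] = D\exp_p^{-1}(q)[w] = D\exp_{x^n_{i+1}}^{-1}(x^n_i)[f(t^n_i,x^n_i)]$, which gives exactly the first two terms of \eqref{eq:r1}. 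It remains to bound $\RR_1(h^2)$ by $\OO(h^2)$ uniformly in $i$ and $n$: this follows once we control $\|\Phi''(s)\|$ on $[0,h]$. Writing $\Phi''(s) = D^2\exp_p^{-1}(c(s))[\dot c(s),\dot c(s)] + D\exp_p^{-1}(c(s))[\ddot c(s)]$, the first summand is controlled by the local $C^2$-bound on $\exp^{-1}$ (equivalently on $\exp$, via \eqref{eq:exp} and Proposition \ref{propH}/Remark \ref{rem:smoothexp}) times $|\dot c(s)|^2 = |w|^2 \le \|f\|_\infty^2$, and the second by the local Lipschitz/boundedness of $D\exp^{-1}$ (Assumption \ref{ass:ic}) times $|\ddot c(s)| = |\ddot\gamma_{q,w}(s)|$, which Lemma \ref{lem:geodesic} bounds by $L(\tilde\BB)|w|^2 \le L(\tilde\BB)\|f\|_\infty^2$ (applied on a slightly enlarged bounded set $\tilde\BB$ containing the curve $c$). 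Since all constants depend only on the fixed bounded set and on $\|f\|_\infty$, not on $i$ or $n$, we get $|\RR_1(h^2)| \le C h^2$ with $C = C(\BB,\|f\|_\infty)$, as claimed.

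The main obstacle — really the only non-bookkeeping point — is making the $\OO(h^2)$ genuinely uniform: one must check that the curve $c(s) = \gamma_{q,w}(s)$ and the base point $p$ stay inside one fixed bounded enlargement of $\BB$ on which the geometry bounds of Assumption \ref{ass:ic} (the $C^2$-control of $\exp$, the Lipschitz control of $D\exp^{-1}$, and the curvature bound feeding Lemma \ref{lem:geodesic}) are available with constants independent of $i,n$. This is exactly what Lemma \ref{lem:schemewp} and the bound \eqref{eq:1} ($h\|f\|_\infty \le \rho(\BB)/2$) provide: $d(q, c(s)) \le s|w| \le h\|f\|_\infty \le \rho(\BB)/2$ and $d(p,q) \le \rho(\BB)/2$, so everything lives in $\tilde\BB := \{m : d(m,\BB) \le \rho(\BB)\}$, a fixed bounded set; applying the Assumption \ref{ass:ic} constants on $\tilde\BB$ then closes the estimate. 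The rest is the routine Taylor computation above.
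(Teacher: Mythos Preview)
Your proposal is correct and follows essentially the same route as the paper: both expand $\Phi(s)=\exp_{x^n_{i+1}}^{-1}\bigl(\gamma_{x^n_i,f(t^n_i,x^n_i)}(s)\bigr)$ around $s=0$ and control the remainder using the local regularity bounds from Assumption \ref{ass:ic} and Lemma \ref{lem:geodesic}, with Lemma \ref{lem:schemewp} ensuring uniformity in $i,n$.

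The only noteworthy difference is the order of the expansion. The paper writes the \emph{first-order} integral form
\[
\exp_{x^n_{i+1}}^{-1}(y^n_i)=\exp_{x^n_{i+1}}^{-1}(x^n_i)+\int_0^h D\exp_{x^n_{i+1}}^{-1}(\gamma(s))[\dot\gamma(s)]\,ds
\]
and then bounds $\int_0^h\bigl(D\exp_{x^n_{i+1}}^{-1}(\gamma(s))[\dot\gamma(s)]-D\exp_{x^n_{i+1}}^{-1}(\gamma(0))[\dot\gamma(0)]\bigr)ds$ directly via the Lipschitz estimate on $D\exp^{-1}$ explicitly stated in Assumption \ref{ass:ic}, together with Lemma \ref{lem:geodesic}. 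Your version goes one order further and computes $\Phi''$, which needs $D^2\exp_p^{-1}$; this is legitimate (smoothness of $\exp$ plus the inverse function theorem gives it), but the paper's choice is slightly more economical since it uses only the $C^{1,1}$ information that Assumption \ref{ass:ic} provides ready-made, without invoking a second derivative of $\exp^{-1}$. Either way the bookkeeping and the resulting constant $C(\BB,\|f\|_\infty)$ are the same.
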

  \begin{proof}
By setting $y^n_i:=\gamma_{x^n_i,f(t^n_i,x^n_i)}(h) $, we recall that 
$$\Gamma_{x^n_{i+1},\gamma_{x^n_i,f(t^n_i,x^n_i)}(h)} = \exp_{x^n_{i+1}}^{-1}(y^n_i).$$
Now we define for $ s \in [0,h]$,  $\gamma(s):= \exp_{x^n_{i}}(s f(t^n_i,x^n_i) )$, so   by using a first order expansion, it comes 
$$ \exp_{x^n_{i+1}}^{-1}(y^n_i) = \exp_{x^n_{i+1}}^{-1}(x^n_i) + \int_{0}^h D\exp_{x^n_{i+1}}^{-1}(\gamma(s))[\dot{\gamma}(s)] ds. $$ 
Furthermore
\begin{align*}
&\left |\int_{0}^h D\exp_{x^n_{i+1}}^{-1}(\gamma(s))[\dot{\gamma}(s)] ds - h  D  \exp_{x^n_{i+1}}^{-1}(x^n_i) [f(t^n_i,x^n_i)]  \right |& \\
\quad &= \left |\int_{0}^h D\exp_{x^n_{i+1}}^{-1}(\gamma(s))[\dot{\gamma}(s)] ds  - \int_{0}^h  D  \exp_{x^n_{i+1}}^{-1}(\gamma(0)) [\dot{\gamma}(0)] ds\right |  &\\
\quad & \leq\int_{0}^h\left | D\exp_{x^n_{i+1}}^{-1}(\gamma(s))[\dot{\gamma}(s)] ds  -   D  \exp_{x^n_{i+1}}^{-1}(\gamma(0)) [\dot{\gamma}(0)] \right | ds &\\
\quad & \leq \int_{0}^h C_e(\BB)  d_{TM}( (\gamma(s),\dot{\gamma}(s) ) , (\gamma(0), \dot{\gamma}(0)) )  ds \textmd{  according to Assumption \ref{ass:ic} } &\\
\quad & \leq C_e(\BB) \int_{0}^h  (d(\gamma(0),\gamma(s) ) +|  L_{\gamma(s) \to\gamma(0) }( \dot{\gamma}(s))  -  \dot{\gamma}(0)| )ds  &\\
\quad & \leq C_e(\BB) \int_{0}^h ( s |f(t^n_i,x^n_i) | +  L(\BB)   d(\gamma(0),\gamma(s) ) ^2) ds \textmd{  using Lemma \ref{lem:geodesic}   } &\\
\quad & \leq C_e(\BB) (\|f\|_{\infty} h^2  +  L(\BB) \|f\|_{\infty}^2 h^3 ) . 
\end{align*}
Thus fixing $\RR_1 (h^2) := \dsp \int_{0}^h D\exp_{x^n_{i+1}}^{-1}(\gamma(s))[\dot{\gamma}(s)] ds - h  D  \exp_{x^n_{i+1}}^{-1}(x^n_i) [f(t^n_i,x^n_i)]  $, we can write 
$$\exp_{x^n_{i+1}}^{-1}(y^n_i) = \exp_{x^n_{i+1}}^{-1}(x^n_i)  + h D  \exp_{x^n_{i+1}}^{-1}(x^n_i) [f(t^n_i,x^n_i)] + \RR_1 (h^2) , $$
which is the desired result.

\end{proof}

With Assumption \ref{ass:ic}, the map $D\exp^{-1}_{x^n_{i+1}} $ is Lipschitz and so it comes (using the notations introduced in this assumption)
\begin{align*}
 & \left| D  \exp_{x^n_{i+1}}^{-1}(x^n_i) [f(t^n_i,x^n_i)] - D
   \exp_{x^n_{i+1}}^{-1}(x^n_{i+1}) [f(t^n_{i+1},x^n_{i+1})] \right|  \\
  & \quad \leq C_e(\K) d_{TM}\left(g(t^n_i,x_n^i),g(t^n_{i+1},x^n_{i+1}) \right) \\
   &  \quad  \leq C_e(\K) L_f (h+d(x^n_i,x^n_{i+1})) \\
   &\quad \leq C_e(\K) L_f (1+V)h  := C_1 h ,
\end{align*}
for some numerical constant $C_1 >0$, where we used Assumption (\ref{ass:f})
(on $g$ introduced there) and (\ref{eq:V}). Thus (\ref{eq:r1}) becomes
\begin{align*}
 \Gamma_{x^n_{i+1},\gamma_{x^n_i,f(t^n_i,x^n_i)}(h)}  & = \Gamma_{x^n_{i+1},x^n_i} + h D  \exp_{x^n_{i+1}}^{-1}(x^n_{i+1}) [f(t^n_{i+1},x^n_{i+1})] + \RR (h^2) \\
  & = \Gamma_{x^n_{i+1},x^n_i} + h f(t^n_{i+1},x^n_{i+1}) + \RR (h^2).
\end{align*}
Since for every $x\in M$, $D  \exp_{x}^{-1} (x) = [D
\exp_x(0)]^{-1} = Id_{T_xM}$. From this and (\ref{aa}), we deduce
(\ref{inclu:amontrer}). 

%%%%%%%%%%%%%%%%%%%%%%%%%%%%%%%%%%%%%%%%%%%%%
\mb {\bf Fourth step: }  Cauchy sequence. \\
Let $n, m$ two integers, by (\ref{eq:borne}) for all $t \in J$,
$x^n(t)$ and $x^m(t)$ belong to $\K $ and so $ d(x^n(t),x^m(t)) < d(x^n(t),x_0) + d(x_0,x^m(t))<\rho(\K)/2 + \rho(\K)/2  < \rho(\K)$.
We consider $t \notin \{ \tau^n(t), \theta^n(t), \tau^m(t), \theta^m(t)  \}$.
From (\ref{inclu:amontrer}) and Lemma \ref{lem:hypo}, we get for
$y=x^m(t) \in \K$,
\begin{align*} 
& \langle \Gamma_{x^n(\theta^n(t)),x^n(\tau^n(t))} + h f(\theta^n(t),x^n(\theta^n(t)),\Gamma_{x^n(\theta^n(t)),x^m(t)} \rangle_{x^n(\theta^n(t))} \\
& \hspace{2cm} \leq {E_\K} |\Gamma_{x^n(\theta^n(t)),x^n(\tau^n(t))} + h f(\theta^n(t),x^n(\theta^n(t))) | d(x^n(\theta^n(t)),x^m(t))^2  \\ 
& \hspace{2.2cm} + \OO(h^2)d(x^n(\theta^n(t),x^m(t))) + \OO(h^2)d(x^n(\theta^n(t),x^m(t))^2. \end{align*}
Since 
\be{ineq} d(x^n(\theta^n(t)),x^m(t)) \leq  1 + d(x^n(\theta^n(t)),x^m(t))^2 ,\ee 
it comes 
\begin{align*}
 \langle \Gamma_{x^n(\theta^n(t)),x^n(\tau^n(t))} + h f(\theta^n(t),x^n(\theta^n(t))),\Gamma_{x^n(\theta^n(t)),x^m(t)} \rangle_{x^n(\theta^n(t))} \nonumber \\
   \leq {E_\K}h (V+\|f\|_{\infty}+\OO(h)) d(x^n(\theta^n(t)),x^m(t))^2 + \OO(h^2). 
\end{align*}
As \begin{align*} 
 d(x^n(\theta^n(t)),x^m(t)) &\leq d(x^n(\theta^n(t)),x^n(t)) +
 d(x^n(t),x^m(t)) \\   & \leq V h + d(x^n(t),x^m(t)) 
\end{align*} 
 we have  $$ d(x^n(\theta^n(t)),x^m(t))^2 \leq 2 d(x^n(t),x^m(t))^2 + \OO(h^2)
$$ and as a consequence
\begin{align}
 \langle \Gamma_{x^n(\theta^n(t)),x^n(\tau^n(t))} + h f(\theta^n(t),x^n(\theta^n(t))),\Gamma_{x^n(\theta^n(t)),x^m(t)} \rangle_{x^n(\theta^n(t))} \nonumber \\
  \leq {2E_\K}h (V+\|f\|_{\infty}+\OO(h)) d(x^n(t),x^m(t))^2 + \OO(h^2). \label{eq:i}
\end{align}
Let us split the left term in $I_n(t)+II_n(t)$ with $$I_n(t):= \langle \Gamma_{x^n(\theta^n(t)),x^n(\tau^n(t))},\Gamma_{x^n(\theta^n(t)),x^m(t)} \rangle_{x^n(\theta^n(t))} $$ and $$II_n(t):= h\langle  f(\theta^n(t),x^n(\theta^n(t))),\Gamma_{x^n(\theta^n(t)),x^m(t)} \rangle_{x^n(\theta^n(t))}.$$
By symmetry, with changing the role of $x^n$ and $x^m$, we obtain
\begin{align*}
  I_m(t)+II_m(t) & \leq {2E_\K}h (V+\|f\|_{\infty}+\OO(h)) d(x^n(t),x^m(t))^2 + \OO(h^2).
\end{align*}
Thus \be{eq:inm}  \frac{ I_n(t)+I_m(t)+II_n(t)+II_m(t)}{h}  \leq {4\A_\K}(V+\|f\|_{\infty}+\OO(h)) d(x^n(t),x^m(t))^2 + \OO(h).  \ee 
Let us study all the quantities $I_n,I_m,II_n$ and $II_m$. 

\mb
For $I_n$, we use the parallel transport from $x^n(\theta^n(t))$ to $x^n(t)$ to obtain
\begin{align}
 I_n(t) = \langle
\Gamma_{x^n(\theta^n(t)),x^n(\tau^n(t))},\Gamma_{x^n(\theta^n(t)),x^m(t)}
\rangle_{x^n(\theta^n(t))}  \nonumber \\
=- h\langle \dot x^n(t),L_{x^n(\theta^n(t)) \to
  x^n(t)}\left[\Gamma_{x^n(\theta^n(t)),x^m(t)}\right]\rangle_{x^n(t)}, \label{eq:in}
\end{align}
since 
\begin{align} L_{x^n(\theta^n(t)) \to x^n(t)}\left[\Gamma_{x^n(\theta^n(t)),x^n(\tau^n(t))}\right] &=\frac{d(x^n(\theta^n(t)),x^n(\tau^n(t)))}{d(x^n(t),x^n(\tau^n(t)))}\Gamma_{x^n(t),x^n(\tau^n(t))}
\nonumber \\
 &= - h\dot x^n(t). \label{eq:der}
\end{align}
Indeed the curve $x^n(t)$ is exactly the geodesic between $x^n(\tau^n(t))$ and $x^n(\theta^n(t))$ with velocity $d(x^n(\theta^n(t)),x^n(\tau^n(t))) /h$.  
Similarly for $m$, we have
 \be{eq:im} I_m(t) =- h \langle \dot x^m(t),L_{x^m(\theta^m(t)) \to x^m(t)}\left[\Gamma_{x^m(\theta^m(t)),x^n(t)}\right]\rangle_{x^m(t)}.\ee

\mb
Concerning $II_n$ and $II_m$, we split them in two parts  $II_n  = A_n+B_n$ with
\begin{align*}
A_n & :=  h \langle L_{x^n(\theta^n(t)) \to x^n(t)} \left[ f(\theta^n(t),x^n(\theta^n(t))) \right] ,  \\
&  \hspace{2cm} L_{x^n(\theta^n(t)) \to x^n(t)}[\Gamma_{x^n(\theta^n(t)),x^m(t)}]-\Gamma_{x^n(t),x^m(t)} \rangle_{x^n(t)}
\end{align*}
and
$$ B_n := \  h \langle  L_{x^n(\theta^n(t)) \to x^n(t)} \left[ f(\theta^n(t),x^n(\theta^n(t))) \right], \Gamma_{x^n(t),x^m(t)} \rangle_{x^n(t)}.$$
The first term $A_n$ is estimated as follows:
\begin{align}
 |A_n| & \leq h \|f\|_{\infty} |\Gamma_{x^n(t),x^m(t)}-L_{x^n(\theta^n(t)) \to x^n(t)}[\Gamma_{x^n(\theta^n(t)),x^m(t)}]|_{T_{x^n(t)}M}  \nonumber \\
 & \leq h \|f\|_{\infty} |\exp_{x^n(t)}^{-1}(x^m(t)) - L_{x^n(\theta^n(t)) \to x^n(t)}[\exp_{x^n(\theta^n(t))}^{-1}(x^m(t))] |_{T_{x^n(t)}M}  \nonumber \\
 &   \leq \frac{1}{2}  h \|f\|_{\infty}|\nabla_x d^2(x^n(t), x^m(t)) - L_{x^n(\theta^n(t)) \to x^n(t)}[\nabla_x d^2(x^n(\theta^n(t)), x^m(t)) ] |_{T_{x^n(t)}M}  \nonumber \\
 & \quad \quad \textmd{  according to Lemma \ref{lemb} }  \nonumber \\
& \leq  \frac{1}{2} h \|f\|_{\infty}  \sup_{s \in [t, \theta^n(t)]} \|H_x d^2(x^n(s), x^m(t))\|_{x^n(s)}  d(x^n(\theta^n(t)),x^n(t)) \nonumber \\
 & \leq   \frac{1}{2} C_{\rho(\K)} h \|f\|_{\infty} d(x^n(\theta^n(t)),x^n(t)) \leq  \frac{1}{2}  C_{\rho(\K)}  h^2 \|f\|_{\infty} V ,
 \label{eq:calculus}
\end{align}
 where we used Proposition \ref{propH} and (\ref{eq:V}).
Similarly for $A_m$, we obtain
\begin{align*}
| A_m |\leq  \frac{1}{2}  C_{\rho(\K)}  h^2 \|f\|_{\infty} V.
\end{align*}
Finally with (\ref{eq:inm}),(\ref{eq:in}) and (\ref{eq:im}), we obtain
\begin{align}
& -\langle \dot x^n(t),L_{x^n(\theta^n(t)) \to x^n(t)}\left[\Gamma_{x^n(\theta^n(t)),x^m(t)}\right]\rangle_{x^n(t)} \\ &  \hspace{2cm}- \langle \dot x^m(t),L_{x^m(\theta^m(t)) \to x^m(t)}\left[\Gamma_{x^m(\theta^m(t)),x^n(t)}\right]\rangle_{x^m(t)} \nonumber \\
& \hspace{3cm} \leq \frac{|B_n+B_m|}{h} +  {4E_\BB}(V+\|f\|_{\infty}+\OO(h)) d(x^n(t),x^m(t))^2 + \OO(h) \label{eq2}
\end{align}
It remains us to estimate $|B_n+B_m|$. With a parallel transport, we get
\begin{align*}
 \frac{B_n+B_m}{h}  & = \langle  L_{x^n(\theta^n(t)) \to x^n(t)}
 \left[ f(\theta^n(t),x^n(\theta^n(t))) \right], \Gamma_{x^n(t),x^m(t)}
 \rangle_{x^n(t)}  \\
&  \hspace{1.5cm} + \langle  L_{x^m(\theta^m(t)) \to x^m(t)} \left[
   f(\theta^m(t),x^m(\theta^m(t))) \right], \Gamma_{x^m(t),x^n(t)}
 \rangle_{x^m(t)} \\
 &=  \langle  L_{x^n(t) \to x^m(t)} L_{x^n(\theta^n(t)) \to x^n(t)}
 \left[ f(\theta^n(t),x^n(\theta^n(t))) \right],- \Gamma_{x^m(t),x^n(t)}
 \rangle_{x^m(t)} \\
& \hspace{1.5cm} + \langle  L_{x^m(\theta^m(t)) \to x^m(t)} \left[
  f(\theta^m(t),x^m(\theta^m(t))) \right], \Gamma_{x^m(t),x^n(t)}
\rangle_{x^m(t)},
\end{align*} 
since $  L_{x^n(t) \to x^m(t)} (\Gamma_{x^n(t),x^m(t)})
=-\Gamma_{x^m(t),x^n(t)}. $
Consequently, 
\begin{align*}
 \frac{B_n+B_m}{h} & = \langle  L_{x^n(t) \to x^m(t)} L_{x^n(\theta^n(t)) \to x^n(t)}
  \left[ f(\theta^n(t),x^n(\theta^n(t))) \right]  \\
& \hspace{1.5cm} - L_{x^m(\theta^m(t)) \to x^m(t)} \left[
  f(\theta^m(t),x^m(\theta^m(t))) \right] , -\Gamma_{x^m(t),x^n(t)}
\rangle_{x^m(t)} \\
 &\leq | L_{x^n(t) \to x^m(t)} L_{x^n(\theta^n(t)) \to x^n(t)} \left[
   f(\theta^n(t),x^n(\theta^n(t))) \right] \\
& \hspace{1.5cm} - L_{x^m(\theta^m(t)) \to x^m(t)} \left[
  f(\theta^m(t),x^m(\theta^m(t))) \right]|_{T_{x^m(t)}M} \ 
d(x^m(t),x^n(t)).
\end{align*}
We estimate the last norm by making appear intermediate
points. Indeed, using the Lipschitz regularity on $f$ (Assumption \ref{ass:f}) and (\ref{eq:V}), we have
\begin{align*}
 \left| f(t, x^m(t)) - L_{x^m(\theta^m(t)) \to x^m(t)} \left[ f(\theta^m(t),x^m(\theta^m(t)) \right] \right| \\
 \leq L_f (h + d( x^m(t),x^m(\theta^m(t)) )) \\
\leq L_f (1+V) h  \lesssim h.
\end{align*}
Similarly, 
$$ \left| L_{x^n(t) \to x^m(t)} \left(L_{x^n(\theta^n(t)) \to x^n(t)} \left[ f(\theta^n(t),x^n(\theta^n(t))) \right] -  f(t,x^n(t)) \right)\, \right| \lesssim h$$
and 
$$ \left| L_{x^n(t) \to x^m(t)} \left[f(t,x^n(t))\right] - f(t, x^m(t))\right| \lesssim d(x^n(t),x^m(t)).$$
So we can conclude that
$$ \begin{array}{lll}
\dfrac{|B_n+B_m|}{h} &\lesssim & \left(h+ d(x^n(t),x^m(t)) \right) d(x^m(t),x^n(t))\vsp \\
&\lesssim &  h d(x^n(t),x^m(t)) + d(x^m(t),x^n(t))^2 \vsp \\
&\lesssim &  h (1 +d(x^m(t),x^n(t))^2) +  d(x^m(t),x^n(t))^2 \textmd{ with} (\ref{ineq}). \vsp \\
\end{array}
$$
Finally, (\ref{eq2}) becomes
\begin{align}
 & - \langle \dot x^n(t), L_{x^n(\theta^n(t)) \to x^n(t)} \left[ \Gamma_{x^n(\theta^n(t)),x^m(t)}\right] \rangle_{x^n(t)} \nonumber
\\ &  \hspace{2cm} - \langle \dot x^m(t),  L_{x^m(\theta^m(t)) \to x^m(t)}  \left[ \Gamma_{x^m(\theta^m(t)),x^n(t)}\right] \rangle_{x^m(t)} \nonumber \\ 
 & \hspace{5cm} \leq (C_2+ \OO(h)) d(x^n(t),x^m(t))^2 + \OO(h) \label{eq3}, 
\end{align}
for some numerical constant $C_2$ (not depending on $n$ and $m$).
For the first terms, since the discretized velocities are uniformly
bounded, it follows that
$$ \begin{array}{l}
|L_{x^n(\theta^n(t)) \to x^n(t)}\left[\Gamma_{x^n(\theta^n(t)),x^m(t)}\right] -
\Gamma_{x^n(t),x^m(t)} | \vsp \\ \hspace{0.5cm} = |L_{x^n(\theta^n(t)) \to x^n(t)}\left[\exp^{-1}_{x^n(\theta^n(t))}(x^m(t))\right] -
\exp^{-1}_{x^n(t)}(x^m(t))|  \vsp \\ \hspace{0.5cm} \leq \frac{1}{2} C_{\rho(\BB)} d(x^n(\theta^n(t)), x^n(t))
\lesssim h  \textmd{ (same arguments as in the proof of \eqref{eq:calculus})}.
\end{array}$$
Hence 
\begin{align*}
-\langle \dot x^n(t),L_{x^n(\theta^n(t)) \to x^n(t)}\left[\Gamma_{x^n(\theta^n(t)),x^m(t)}\right]\rangle_{x^n(t)}
\\  =  -\langle \dot x^n(t),\Gamma_{x^n(t),x^m(t)}\rangle_{x^n(t)} +\OO(h).
\end{align*}
Producing similar reasoning for the symmetrical quantity, it comes
\begin{align}
-\langle \dot x^n(t),\Gamma_{x^n(t),x^m(t)}\rangle_{x^n(t)} - \langle \dot x^m(t),\Gamma_{x^m(t),x^n(t)} \rangle_{x^m(t)} \nonumber \\
\leq (C_2+ \OO(h)) d(x^n(t),x^m(t))^2 + \OO(h) \label{eq4},
\end{align}
which gives (according to Lemma \ref{lemb})
\begin{align*}
 \langle \dot x^n(t),\nabla_x d^2(x^n(t),x^m(t))\rangle_{x^n(t)} + \langle \dot x^m(t),\nabla_x d^2(x^m(t),x^n(t)) \rangle_{x^m(t)} \nonumber \\
\leq 2(C_2+ \OO(h)) d(x^n(t),x^m(t))^2 + \OO(h).
\end{align*}
Since $ \nabla_x d^2(x^m(t),x^n(t)) = \nabla_y d^2(x^n(t),x^m(t)) $, we conclude that 
$$ \frac{d}{dt} d(x^n(t) ,x^m(t))^2  \leq 2(C_2+ \OO(h)) d(x^n(t),x^m(t))^2 + \OO(h).$$
As usual, Gronwall's Lemma implies that
$$  \sup_{t\in [0,\bar{T}]} d(x^n(t),x^{m}(t))^2 = \OO(h).$$
and so $(x^n)_{n}$ is a Cauchy sequence of $C^0([0,T],M)$. Since $M$ is supposed to be metrically complete, the sequence $(x^n)_{n}$ also strongly converges to a function $x\in C^0([0,\bar{T}],M)$.
Furthermore \be{eq:lim}  \sup_{t\in [0,\bar{T}]} d(x^n(t),x(t)) = \OO(h^\frac{1}{2})\ee
%%%%%%%%%%%%%%%%%%%%%%%%%%%%%%%%%%%%%%%%%%%%%%%%%%%%%%%%
\mb {\bf Fifth step: }  The limit function $x$ is solution of (\ref{eq:sys}). \\

Let $t_0 \in [0, \bar T]$,  there exists $\Phi:U_0
\rightarrow H$ a chart where $U_0$ is an bounded open set of $M$ containing $x(t_0)$. As $U_0$ is an open set, $$  \exists r < \rho(U_0), \, U:= B(x(t_0),r) \subset U_0. $$ Obviously,  $r < \rho(U_0) \leq \rho(U)$.  
Since $x$ is continuous on $[0, T]$, there is an open interval $\tilde{J} \subset [0, \bar T]$ containing $t_0$ such that 
\begin{align}
\forall t \in \tilde{J}, \, d(x(t),x(t_0)) < \frac{r}{4}  . 
\label{hypo:J}
\end{align}
Moreover as $x^n$ uniformly converges to $x$ on  $[0, \bar T]$, there exists $h_0$ such that for all $h<h_0$,  for all $t \in J$

\begin{align}
&d(x^n(t),x(t)) < \frac{r}{4} ,  \quad
V h < \frac{r}{4}, \quad
K_L h < \frac{r}{4}\quad  and  \quad 
K_L h < \ell(U)  
\label{hypo:h0}
\end{align}
where $\ell(U)$ is defined in Theorem \ref{thm:projection}  by replacing $C$ with $C(t_0)$. 
Note that for all $h<h_0$ and for all $t \in \tilde{J}$, $x^n(t) \in U$.

Recall that $x^n$ satisfies equation (\ref{inclu:amontrer}) on $J$, which is
$$ h^{-1} \Gamma_{x^n(\theta^n(t)),x^n(\tau^n(t))} +  f(\theta^n(t),x^n(\theta^n(t))) +\RR(h)\in \NN(C(\theta^n(t)),x^n(\theta^n(t))).$$
By Lemma \ref{lem:hypo} %(applied with a bounded set $\K$ containing $U$) 
and thanks to the boundedness of $f$ and of the discretized velocities
(\ref{eq:vit}), there exist constants $\beta$ and  $ \A_U$ such that for all $c^n\in C(\theta^n(t))  \cap U$ satisfying $d(c^n, x^n(\theta^n(t))) \leq \rho(U)$, 
\begin{align*}
&  \langle h^{-1} \Gamma_{x^n(\theta^n(t)),x^n(\tau^n(t))} +
f(\theta^n(t),x^n(\theta^n(t)))+ \textrm{R}(h), 
\Gamma_{x^n(\theta^n(t)),c^n} \rangle_{T_{x^n(\theta^n(t))}M} \\
& \hspace{5cm} \leq \beta{\A_U } d(x^n(\theta^n(t)),c^n)^2.  
\end{align*}

So using a parallel transport with (\ref{eq:der}), 
\begin{align}
  &\langle -\dot x^n(t) + L_{x^n(\theta^n(t)) \to x^n(t)}\left[ f(\theta^n(t),x^n(\theta^n(t)))\right]+ \RR_2(h), \nonumber \\ 
  & \hspace{1cm} L_{x^n(\theta^n(t)) \to x^n(t)} \Gamma_{x^n(\theta^n(t)),c^n} \rangle_{T_{x^n(t)}M} \leq \beta {\A_U  }  d(x^n(\theta^n(t)),c^n)^2, 
\label{maj}
\end{align}
where $ | \RR_2(h) | = \OO(h)$.
Now in order to apply usual arguments in the Hilbertian context, we use the chart $\Phi$ to work in $H$.  So we define
$y^n = \Phi(x^n)$. By this way, $\dot y^n$ and $y^n$ are bounded
sequences and $y^n$ strongly converges to $y:=\Phi(x)$. Then we know
that up to a subsequence, we can assume that $\dot y^n$ $*-$weakly
converges to $\dot y$ in  $L^\infty(\tilde{J},H)$ which implies that $\dot
y^n$ weakly converges to $\dot y$ in  $L^1(\tilde{J},H)$ since $\tilde{J}$ is bounded. 
Consequently by Mazur's Lemma, there exists a subsequence $z^n \in L^1(\tilde{J},H)$ satisfying for almost every $t\in \tilde{J}$
\be{eq:zn} z^n(t) \in \textmd{Conv} \left( \dot y^k (t), \ k \geq n \right)  \xrightarrow[n \to \infty ]{} \dot y(t) \ee
 where \textrm{Conv} stands for the convex combinations and
\be{eq:zn2} z^n \xrightarrow[n \to \infty ]{} \dot y \textmd{ in } L^{1} (\tilde{J},H). \ee

We now look for proving for a.e. $t$ (when (\ref{eq:zn}) holds) and all $c \in C(t)\cap U$ satisfying $ d(x(t),c) <r/4$:
$$  \langle -\dot x(t) +f(t,x(t)) ,  \Gamma_{x(t),c} \rangle_{T_{x(t)}M} \leq \beta  {\A_U } d(x(t),c)^2.$$
Let $h < h_0$, $t \in \tilde{J}$ (such that (\ref{eq:zn}) holds), $c \in C(t) \cap U$ satisfying  $ d(x(t),c) <r/4$.  Thus  $d_{C(\theta^n(t))}(c) \leq d_H (C(t), C(\theta^n(t))) \leq K_L h < \ell(U) $ by Assumption \ref{ass:C} and (\ref{hypo:h0}). Indeed, we recall that the quantity $\ell(U) $ is the same for all the sets $C(t), t \in [0,T]$ thanks to Remark \ref{rem:constantell} and Assumption \ref{ass:C}. 
 Consequently  Theorem \ref{thm:projection} implies that there is $ c^n \in \PPP_{C(\theta^n(t))}(c)$. Since 
 \be{majccn}
 d(c,c^n) = d_{C(\theta^n(t))}(c) \leq  K_L h < r/4,  
\ee
 we deduce from (\ref{hypo:J}) that 
\begin{align*}
d(c^n , x(t_0)) \leq d(c^n , c)  + d(c, x(t)) + d(x(t), x(t_0)) \leq 3r/4 < r. 
\end{align*}
In other words, $c^n$ belongs to $U$. 
Moreover with (\ref{hypo:h0}) we obtain
\begin{align}
 d(c^n , x^n(\theta^n(t))) & \leq  d(c^n , c)  + d(c, x(t))+ d(x(t), x^n(t)) + d(x^n(t),x^n(\theta^n(t) ) \nonumber \\
 & < r/2 + r/4 + Vh <r < \rho(U) 
\label{maj1}
\end{align}
and 
\begin{align}
 d(c , x^n(\theta^n(t))) & \leq  d(c, x(t))+ d(x(t), x^n(t)) + d(x^n(t),x^n(\theta^n(t) ) \nonumber\\
 & < 3r/4 <\rho(U). 
\label{maj2}
\end{align}
Firstly, the inequality \eqref{maj} is satisfied thanks to \eqref{maj1}. Secondly, 
\begin{align*}
 & \left| \langle L_{x^n(\theta^n(t)) \to x^n(t)} \left[ f(\theta^n(t),x^n(\theta^n(t)))\right]+ \RR_2(h) \right. , \vspace{6pt}\\
&  \hspace{1cm} \left. - L_{x^n(\theta^n(t)) \to x^n(t)} [\Gamma_{x^n(\theta^n(t)),c^n} - \Gamma_{x^n(\theta^n(t)),c}] \rangle \right|  \vspace{6pt}\\ 
 & \hspace{2cm} \leq ( \|f\|_\infty +\OO(h)) | L_{x^n(\theta^n(t)) \to x^n(t)} [\Gamma_{x^n(\theta^n(t)),c^n} - \Gamma_{x^n(\theta^n(t)),c}] | \vspace{6pt} \\ 
 & \hspace{2cm} \leq ( \|f\|_\infty +\OO(h)) | \exp^{-1}_{x^n(\theta^n(t))} (c^n)-\exp^{-1}_{x^n(\theta^n(t))} (c) | \vspace{6pt}\\
 & \hspace{2cm} \leq ( \|f\|_\infty +\OO(h)) C_e(U) d(c^n,c)  \textmd{ according to } \eqref{maj1} , \eqref{maj2}  \textmd{ and Assumption } \ref{ass:ic} \vspace{6pt} \\
 & \hspace{2cm} \leq ( \|f\|_\infty +\OO(h))  C_e(U)  d_H(C(t),C(\theta^n(t))) = \OO(h)  \textmd{ thanks to Assumption  \ref{ass:C}}. 
 \end{align*}
Thirdly, using  \eqref{majccn}, we have $$d(x^n(\theta^n(t)),c) \leq  d(x^n(\theta^n(t)),c^n) + d(c, c^n) \leq  d(x^n(\theta^n(t)),c^n) + K_L h  .$$ 
Hence with \eqref{maj1},
\begin{align*}
d(x^n(\theta^n(t)),c)^2 \leq  d(x^n(\theta^n(t)),c^n)^2  +2 K_L h \rho(U) + (K_L h)^2.
\end{align*}
The previous inequality always holds if $c^n$ and $c$ are switched (using  \eqref{maj2}) and as a consequence 
$$  d(x^n(\theta^n(t)),c^n)^2 -d(x^n(\theta^n(t)),c)^2 =\OO(h).$$  
Hence \eqref{maj} becomes
\begin{align*}
  & \langle -\dot x^n(t) + L_{x^n(\theta^n(t)) \to x^n(t)}\left[ f(\theta^n(t),x^n(\theta^n(t)))\right]+ \RR_2(h),   L_{x^n(\theta^n(t)) \to x^n(t)} \Gamma_{x^n(\theta^n(t)),c} \rangle_{T_{x^n(t)}M} \vspace{6pt} \\
  & \hspace{5cm} \leq \beta {\A_U  } d(x^n(\theta^n(t)),c)^2 +\OO(h).
\end{align*}
For $n \to \infty$, since $f$ is Lipschitz continuous on $TM$ and with (\ref{eq:lim}), we have as previously
\begin{align*}
 & \langle  L_{x^n(\theta^n(t)) \to x^n(t)}\left[ f(\theta^n(t),x^n(\theta^n(t)))\right]+ \RR_2 (h), - L_{x^n(\theta^n(t)) \to x^n(t)}\Gamma_{x^n(\theta^n(t)),c} \rangle_{T_{x^n(t)}M} \\ 
 & \hspace{5cm} + \langle f(t,x(t)),   \Gamma_{x(t),c} \rangle_{T_{x(t)}M} = \OO(h^{\frac{1}{2}}).
\end{align*}
Consequently, we have for large enough $k$ 
\begin{align*} 
 & \langle -\dot x^k(t) ,  L_{x^k(\theta^k(t)) \to x^k(t)} \Gamma_{x^k(\theta^k(t)),c} \rangle_{T_{x^k(t)}M} \\
 & \hspace{2cm} \leq \beta {\A_U } d(x^k(\theta^k(t)),c)^2 - \langle f(t,x(t)),  \Gamma_{x(t),c} \rangle_{T_{x(t)}M}+\OO(h^{\frac{1}{2}}), 
\end{align*}
which means
\begin{align*}
 & \left \langle  -D\Phi(x^k(t))^{-1} [\dot y^k(t)]  ,  L_{x^k(\theta^k(t)) \to x^k(t)} \Gamma_{x^k(\theta^k(t)),c} \right \rangle_{T_{x^k(t)}M} \\
 & \hspace{2cm} \leq  \beta {\A_U } d(x(t),c)^2 - \langle f(t,x(t)),  \Gamma_{x(t),c} \rangle_{T_{x(t)}M}+\OO(h^{\frac{1}{2}}) 
\end{align*}
because $ d(x^k(\theta^k(t)),c) \leq d(x(t),c)  + \OO(h^{\frac{1}{2}})$ 
and so
\begin{align*}
 &  \left  \langle - \dot y^k(t)  , \left[D\Phi(x^k(t))^{-1}\right]^* L_{x^k(\theta^k(t)) \to x^k(t)} \Gamma_{x^k(\theta^k(t)),c}  \right \rangle_{H} \\
 & \hspace{2cm} \leq \beta {\A_U } d(x(t),c)^2 - \langle f(t,x(t)),  \Gamma_{x(t),c} \rangle_{T_{x(t)}M}+\OO(h^{\frac{1}{2}}).
\end{align*}
In addition 
$$\left[D\Phi(x^k(t))^{-1}\right]^* \left( L_{x^k(\theta^k(t)) \to x^k(t)} \Gamma_{x^k(\theta^k(t)),c}\right ) = \left[D\Phi(x(t))^{-1}\right]^*( \Gamma_{x(t),c})+\OO(h^{\frac{1}{2}}).$$
Indeed setting $ w_k =  L_{x^k(\theta^k(t)) \to x^k(t)} \Gamma_{x^k(\theta^k(t)),c}$, $\tilde{w_k} =  \Gamma_{x^k(t),c}$ and $w=  \Gamma_{x(t),c}  $, we have
\begin{align*}
& \left[D\Phi(x^k(t))^{-1}\right]^*(w_k) -  \left[D\Phi(x(t))^{-1}\right]^* (w) \\
&  \hspace{1cm}= \left[D\Phi(x^k(t))^{-1}\right]^*(w_k - \tilde{w_k} + \tilde{w_k} - L_{x(t) \to x^k(t)} w) \\
 &   \hspace{2cm}  +  \left[D\Phi(x^k(t))^{-1}\right]^* ( L_{x(t) \to x^k(t)} w)   - \left[D\Phi(x(t))^{-1}\right]^* (w) \\
&   \hspace{1cm} = \OO(h^{\frac{1}{2}}),
\end{align*}
by smoothness of the chart $\Phi$ and because $w_k - \tilde{w_k} = \OO(h)  $, $ \tilde{w_k} - L_{x(t) \to x^k(t)} w =  \OO(h^{\frac{1}{2}}) $. 
From this equality we deduce that 
$$  \langle -\dot y^k(t)  , \left[D\Phi(x(t))^{-1}\right]^* \Gamma_{x(t),c} \rangle_{H} \leq \beta  {\A_U } d(x(t),c)^2 - \langle f(t,x(t)),  \Gamma_{x(t),c} \rangle_{T_{x(t)}M}+\OO(h^{\frac{1}{2}}).$$
So applying (\ref{eq:zn}) and then (\ref{eq:zn2}) give for almost every time $t\in \tilde{J}$ and all $c\in U \cap C(t)$ satisfying $d(x(t),c)< r/4$,
$$  \langle -\dot y(t)  , \left[D\Phi(x(t))^{-1}\right]^* \Gamma_{x(t),c} \rangle_{H} \leq \beta {\A_U} d(x(t),c)^2 - \langle f(t,x(t)),  \Gamma_{x(t),c} \rangle_{T_{x(t)}M},$$
which is equivalent to
$$  \langle- \dot x(t)  + f(t,x(t)) ,  \Gamma_{x(t),c} \rangle_{T_{x(t)}M} \leq  \beta {\A_U } d(x(t),c)^2.$$
As a consequence, Proposition \ref{prop:hypomo} yields that for such time $t\in \tilde{J}$, 
\begin{align}
 -\dot x(t) +f(t,x(t)) \in \NN( C(t), x(t)).
\label{eq:incl}
\end{align}
We have shown that for every $t_0 \in J$, there is an open interval $\tilde{J}$ where inclusion \eqref{eq:incl} is satisfied almost everywhere. By a compactness argument, $J$ can be covered by a finite number of these intervals so finally \eqref{eq:incl} holds for $a. e. \, t \in J $. Furthermore by dividing $I= [0, T] $ into small intervals of length $|J|= \bar T$, we can again follow these five steps and obtain the same result for $a. e. \,  t \in I $.  In other words, the function $x$ (so built in the whole interval $I$) is a solution of (\ref{eq:sys}).

%%%%%%%%%%%%%%%%%%%%%%%%%%%%%%%%%%%%%%%%%%%%%
\mb {\bf Sixth step: } Uniqueness \\
Let $x_1$, $x_2$ be two solutions of (\ref{eq:sys}). Since $x_1 $ and $x_2$ belong to $W^{1,\infty}(I,M)$, for all $t \in  I, \, x_1(t), x_2(t)  \in \K$, 
where $\K$ is a bounded set of $M$. Furthermore setting $$ \displaystyle \alpha := \frac{\rho(\K)}{ 2\max(\|\dot{x_1}\|_\infty, \|\dot{x}_2\|_\infty)  } >0$$ we have for $i=1,2$ and for all $t \leq \alpha $, $d(x_i(t), x_0) \leq \rho(\K) /2$ and so $d(x_1(t),x_2(t)) \leq \rho(\K) $.\\ 
It comes from Lemma \ref{lem:hypo} that for a.e. $t \leq \alpha $, 
\be{ineg1} \langle- \dot{x_1}(t) + f(t, x_1(t)),  \Gamma_{x_1(t), x_2(t)} \rangle _{T_{x_1(t)}M} \leq   \A_\K  |\dot{x_1}(t) - f(t, x_1(t)) |  d(x_1(t),x_2(t))^2  , \ee
and 
\be{ineg2} \langle -\dot{x_2}(t) + f(t, x_2(t)), \Gamma_{x_2(t), x_1(t)} \rangle _{T_{x_2(t)}M} \leq   {\A_\K  |\dot{x_2}(t) - f(t, x_2(t)) | } d(x_1(t),x_2(t))^2 . \ee  
Moreover with (\ref{eq:symmetry}),  $L_{x_1(t)  \to x_2(t)}\left(\Gamma_{x_1(t),x_2(t)}\right) = -\Gamma_{x_2(t),x_1(t)}$. Thus (\ref{ineg1}) becomes
\begin{align}  
& \langle L_{x_1(t) \to  x_2(t)}  (\dot{x_1}(t) - f(t, x_1(t))),  \Gamma_{x_2(t), x_1(t)} \rangle _{T_{x_2(t)}M} \nonumber \\
 & \hspace{3cm} \leq   {\A_\K  |\dot{x_1}(t) - f(t, x_1(t)) | } d(x_1(t),x_2(t))^2 . \label{ineg3}
\end{align}
By summing (\ref{ineg2}) and (\ref{ineg3}), we have 
 \begin{align}  
 & \langle  L_{x_1(t) \to x_2(t)}  (\dot{x_1}(t) - f(t, x_1(t))) - (\dot{x_2}(t) - f(t, x_2(t))),  \Gamma_{x_2(t), x_1(t)} \rangle _{T_{x_2(t)}M} \nonumber \\
 & \hspace{3cm} \leq   \A_\K  F  d(x_1(t),x_2(t))^2 , \label{ineg4}
 \end{align}
where $F= 2 \|f\|_\infty + \|\dot{x_1}\|_\infty +\|\dot{x}_2\|_\infty $. 
Assumption \ref{ass:f} implies that $$ | f(t, x_2(t))- L_{x_1(t) \to x_2(t)}  ( f(t, x_1(t))) |\leq L_f d(x_1(t),x_2(t)). $$ As $|\Gamma_{x_2(t), x_1(t)}  | = d(x_1(t),x_2(t))$, 
we have 
\be{ineg5}  \langle f(t, x_2(t))- L_{x_1(t) \to x_2(t)}  ( f(t, x_1(t))) ,   \Gamma_{x_2(t), x_1(t)} \rangle \leq  L_f d(x_1(t),x_2(t))^2. \ee
It follows from (\ref{ineg4}) and (\ref{ineg5}) that 
\be{ineg6} \langle  L_{x_1(t) \to x_2(t)}  (\dot{x_1}(t)) - \dot{x_2}(t),   \Gamma_{x_2(t), x_1(t)} \rangle _{T_{x_2(t)}M} \leq  \left({\A_\K  F }+L_f \right) d(x_1(t),x_2(t))^2 ,  \ee

Furthermore 
$$\begin{array}{lll}
 \frac{d}{dt} (d(x_1(t),x_2(t))^2 &= &\langle \dot{x_1}(t), \nabla_x d^2(x_1(t),x_2(t)) \rangle_{T_{x_1(t)}M}  \vspace{6pt}\\
 & & \hspace{2cm}+ \langle \dot{x_2}(t), \nabla_y d^2(x_1(t),x_2(t)) \rangle_{T_{x_2(t)}M}  \vspace{6pt}\\
 &=&\langle \dot{x_1}(t), -2 \Gamma_{x_1(t), x_2(t)} \rangle_{T_{x_1(t)}M} \vspace{6pt} \\
& & \hspace{2cm} + \langle \dot{x_2}(t), - 2 \Gamma_{x_2(t), x_1(t)} \rangle_{T_{x_2(t)}M} \vspace{6pt} \\
 &=& 2\langle  L_{x_1(t) \to x_2(t)}  (\dot{x_1}(t)) - \dot{x_2}(t),   \Gamma_{x_2(t), x_1(t)} \rangle _{T_{x_2(t)}M}.
\end{array}$$
We deduce that $$ \frac{1}{2}\frac{d}{dt} (d(x_1(t),x_2(t))^2  \leq  \left( {\A_\K  F }+L_f \right) d(x_1(t),x_2(t))^2. $$ which is equivalent to 
 $$ \frac{d}{dt} (d(x_1(t),x_2(t))^2  \leq 2 \left( {\A_\K  F }+L_f \right) d(x_1(t),x_2(t))^2. $$ With the help of Gronwall Lemma, we conclude that $x_1=x_2$ in $[0, \alpha]$ since $x_1(0)=x_0=x_2(0)$ and obviously this equality holds in $[0,T]$ with the same arguments. 
\end{proof}

%\section{A toi Juliette ... pour les applis !!} \label{sec:appli}

\bibliographystyle{plain}

\end{document}